\newcommand{\hilbert}{\mathcal{H}}
\newcommand{\Q}{\mathcal{Q}}
\newcommand{\M}{\mathcal{M}}
\newcommand{\Li}{\mathcal{L}}
\newcommand{\C}{\mathcal{C}}
\newcommand{\reals}{\mathbb{R}}
\newcommand{\inner}[2]{{\langle #1, #2\rangle}}
\newcommand{\normsq}[1]{{\lVert #1\rVert ^2}}
\newcommand{\norm}[1]{{\lVert #1\rVert}}
\DeclareMathOperator*{\minimize}{minimize}
\DeclareMathOperator*{\maximize}{maximize}
\DeclareMathOperator*{\tr}{Tr}
\newtheorem{rem}{Remark}
\newtheorem{prop}{Proposition}
\newtheorem{fact}{Fact}
\title{Operator Splitting Performance Estimation: Tight contraction factors and optimal parameter selection}
\author{Ernest K. Ryu\thanks{Department of Mathematical Sciences, Seoul National University, Seoul, Korea \email{ernestryu@snu.ac.kr}}
\and Adrien B. Taylor\thanks{INRIA, D\'epartement d'informatique de l'ENS, \'Ecole normale sup\'erieure, CNRS, PSL Research University, Paris, France \email{adrien.taylor@inria.fr}}
\and Carolina Bergeling\thanks{Department of Automatic Control, Lund University, Lund, Sweden \email{carolina.bergeling@control.lth.se}, \email{pontusg@control.lth.se}}
\and  Pontus Giselsson\footnotemark[4]
}
\begin{document}

\maketitle

\begin{abstract}
We propose a methodology for studying the performance of common splitting methods through semidefinite programming. We prove tightness of the methodology and demonstrate its value by presenting two applications of it. First, we use the methodology as a tool for computer-assisted proofs to prove tight analytic contraction factors for Douglas--Rachford splitting that are likely too complicated for a human to find bare-handed. Second, we use the methodology as an algorithmic tool to computationally select the optimal splitting method parameters by solving a series of semidefinite programs.
\end{abstract}
%
%

\section{Introduction}
Consider the fixed-point iteration in a real Hilbert space $\hilbert$ $$z^{k+1}=Tz^k,$$ where $T\colon\hilbert\rightarrow\hilbert$.
We say $\rho<1$ is a contraction factor of $T$ if $$\|Tx-Ty\|\le \rho \|x-y\|$$ for all $x,y\in \hilbert$.
We ask the question: given a set of assumptions, what is the best (tight) contraction factor one can prove?
In this work, we present the operator splitting performance estimation problem (OSPEP), a methodology for studying contraction factors of forward-backward splitting (FBS), Douglas--Rachford splitting (DRS), and Davis--Yin splitting (DYS).


First, we present the OSPEP problem, the infinite-dimensional non-convex optimization problem of finding the best (smallest) contraction factor given a set of assumptions on the operators.
Following the technique of Drori and Teboulle \cite{drori2014performance}, we reformulate the problem into a finite-dimensional convex semidefinite program (SDP).
We then establish tightness (exactness) of this reformulation with interpolation conditions.


Next, we demonstrate the value of OSPEP through two uses.
First, we use OSPEP as a tool for computer-assisted proofs to prove tight analytic contraction factors for DRS.
The results are tight in that they have exact matching lower bounds.
The proofs are computer-assisted in that their discoveries were assisted by a computer, but their verifications do not require a computer.
Second, we use OSPEP as an algorithmic tool to automatically select the optimal splitting method parameters.

The tightness guarantee and flexibility make OSPEP a powerful tool.
Due to tightness, OSPEP can provide both positive and negative results.
The flexibility allows users to pick and choose assumptions from a set of standard assumptions.


\subsection{Organization and contribution}
Section~\ref{s:operator_interp} presents operator interpolation, later used in Section~\ref{sec:PEP} to establish tightness.
Section~\ref{sec:PEP} presents the OSPEP methodology, an exact transformation of the problem of finding the best contraction factor into a convex SDP, and provides tightness guarantees.
Section~\ref{sec:tight_DRS} presents tight analytic contraction factors for DRS under assumptions considered in \cite{giselsson2017tight,moursi2018douglas} using OSPEP as a tool for computer-assisted proofs.
Section~\ref{sec:parameter_selection} presents an automatic parameter selection method using OSPEP as an algorithmic tool.
Section~\ref{sec:conclusion} concludes the paper.

The main contribution of this work is twofold.
The first is analyzing the performance of monotone splitting methods using SDPs \textbf{with tightness guarantees}.
The overall formulation generally follows from the technique of Drori and Teboulle \cite{drori2014performance} and the prior work discussed in Section~\ref{ss:prior_work}.
The tightness, established with the operator interpolation results of Sections~\ref{s:operator_interp}, is a novel theoretical contribution.
The second contribution is the techniques of Sections~\ref{sec:tight_DRS} and \ref{sec:parameter_selection}, an illustration of how to use the proposed methodology.
Although we do consider the results of Sections~\ref{sec:tight_DRS} and \ref{sec:parameter_selection} to be interesting and valuable, we view the technique, rather than the result, to be the second major contribution.

The major and minor contributions of this work are, to the best of our knowledge, novel in the following sense.
The tightness of Section~\ref{sec:PEP} is new.
The technique of Section~\ref{sec:tight_DRS} is the first use of computer-assisted proofs to obtain provably tight rates for monotone operator splitting methods.
The tight results of Section~\ref{sec:tight_DRS} improve upon the prior results of \cite{giselsson2017tight,moursi2018douglas}.
The technique of Section~\ref{sec:parameter_selection} is the first use of automatic parameter selection that is optimal with respect to the algorithm and assumptions.

\subsection{Prior work}
\label{ss:prior_work}
FBS was first stated in the operator theoretic language in \cite{bruck1977,passty1979}.
The projected gradient method presented in \cite{goldstein1964,levitin1966} served as a precursor to FBS.
Peaceman-Rachford spitting (PRS) was first presented in \cite{peaceman1955,kellogg1969,lions1979},
and DRS was first presented in \cite{douglas1956,lions1979}.
DYS was first presented in \cite{Davis2017}.
Forward-Douglas--Rachford splitting of Raguet, Fadili, Peyr\'e, and Brine\~no-Arias \cite{raguetGFBS,BAFDRS,Raguet2018} served as a precursor to DYS.

What we call interpolation in this work is also called extension.
The maximal monotone extension theorem, which we later state as Fact~\ref{prop:mm_ext}, is well known, and it follows from a standard application of Zorn's lemma.
Reich \cite{REICH1977378}, Bauschke \cite{bauschke2007}, Reich and Simons \cite{reich2005}, Bauschke, Wang, and Yao \cite{kvextension2010,kvextension2_2010,Wang2013}, and Crouzeix and Anaya \cite{cruozeix2007,CROUZEIX2010,CROUZEIX20102}
have studied more concrete and constructive extension theorems for maximal monotone, nonexpansive, and firmly-nonexpansive operators using tools from monotone operator theory.


Contraction factors and linear convergence for first-order methods have been a subject of intense study.
Surprisingly, many of the published contraction factors are not tight. 
For FBS,
Mercier,
\cite[p.\ 25]{mercier1980inequations},
Tseng \cite{tseng1991},
Chen and Rockafellar \cite{chen1997}, and
Bauschke and Combettes \cite[Section 26.5]{BauschkeCombettes2017_convex} proved linear rates of convergence, but did not provide exact matching lower bounds.
Taylor, Hendrickx, and Glineur showed tight contraction factors and provided exact matching lower bounds \cite{taylor2018pgm}.
For DRS,
Lions and Mercier \cite{lions1979} and Davis and Yin \cite{davis_rates}
proved linear rates of convergence, but did not provide exact matching lower bounds.
Giselsson and Boyd \cite{giselsson2014diagonal,giselsson2017linear}, Giselsson \cite{giselsson2015tight,giselsson2017tight}, and  Moursi and Vandenberghe \cite{moursi2018douglas}
proved linear rates of convergence and provided exact matching lower bounds for certain cases.
ADMM is a splitting method closely related to DRS.
Deng and Yin \cite{Deng2016}, Giselsson and Boyd \cite{giselsson2014diagonal,giselsson2017linear}, Nishihara et al.\ \cite{nishihara15},
Fran{\c{c}}a and Bento~\cite{francca2016explicit},
Hong and Luo \cite{Hong2017},
Han, Sun, and Zhang \cite{d_sun_admm1}, and
Chen et al.\ \cite{d_sun_admm2}
proved linear rates of convergence for ADMM. Matching lower bounds are provided only in \cite{giselsson2017linear}. Further, \cite{giselsson2015tight} provides matching lower bounds to the rates in \cite{giselsson2014diagonal}.
Ghadimi et al.\ \cite{GHADIMI2012139,Ghadimi2015} and 
Teixeira et al.\ \cite{johansson_admm,johansson_admm2}
proved linear rates of convergence and provided matching lower bounds for ADMM applied to quadratic problems.
For DYS, Davis and Yin \cite{Davis2017}, Yan \cite{Yan2018},  Pedregosa and Gidel \cite{pedregosa2018}, and 
Pedregosa, Fatras, and Casotto \cite{pedegrosa_3op_var}
proved linear rates of convergence, but did not provide exact matching lower bounds.
Pedegrosa \cite{pedregosa2016} analyzed sublinear convergence, but not contraction factors.

Analyzing convex optimization algorithms by formulating the analysis as an SDP has been a rapidly growing area of research in the past 5 years.
Past work analyzed convex optimization algorithms, and, to the best of our knowledge, analyzing the performance of monotone operator splitting methods with SDPs or any form of computer-assisted proof is new.
(After the initial version of this paper was made public on arXiv, several papers citing our work followed up on our results and used SDPs to analyze other monotone operator splitting methods \cite{gu_yang_3,gu_yang_1,gu_yang_2,8736360,preciado2019,Ryu2019}.)
Drori and Teboulle \cite{drori2014performance} and Taylor, Hendrickx, and Glineur \cite{taylor2017exact,taylor2017smooth} presented the performance estimation problem (PEP) methodology.
Our work generally follows the techniques presented by Drori and Teboulle \cite{drori2014performance} while contributing by establishing tightness.
Lieder \cite{lieder_halpern} applied the PEP approach to analyze the Halpern iteration without an a priori guarantee of tightness.
Lessard, Recht, and Packard \cite{lessard2016analysis} leveraged techniques from control theory and used integral quadratic constraints (IQC) for finding Lyapunov functions for analyzing convex optimization algorithms.
The IQC and PEP approaches were recently linked by Taylor, Van Scoy, and Lessard \cite{taylor2018lyapunov}. Finally, Nishihara et al.\ \cite{nishihara15} and Fran{\c{c}}a and Bento \cite{francca2016explicit} used IQC to the analyze ADMM.

Finally, both IQC and PEP approaches allowed designing new methods for particular problem settings. For example, the optimized gradient method by Kim and Fessler~\cite{kim2016optimized,Kim2017,Kim2018,doi:10.1137/16M108940X,doi:10.1137/17M112124X,kim_arxiv}
 (first numerical version by Drori and Teboulle~\cite{drori2014performance}) was developed using PEPs and enjoys the best possible worst-case guarantee on the final objective function accuracy after a fixed number of iteration, as showed by Drori~\cite{drori2017exact}.  On the other hand, the IQC framework was used by Van Scoy et al.~\cite{van2018fastest} for developing the triple momentum method, the first-order method with the fastest known convergence rate for minimizing a smooth strongly convex function.

\subsection{Preliminaries}
We now quickly review standard results and set up the notation.
We follow standard notation \cite{ryu2016,BauschkeCombettes2017_convex}.
Write $\hilbert$ for a real Hilbert space equipped with a (symmetric) inner product $\langle \cdot,\cdot\rangle$.
Write $\mathbb{S}_+^{n}$ for the set of $n\times n$ symmetric positive semidefinite matrices.
Write $M\succeq 0$ if and only if $M\in \mathbb{S}_+^{n}$.

We say $A$ is an operator on $\hilbert$ and write $A\colon\hilbert\rightrightarrows\hilbert$ if $A$ maps a point in $\hilbert$ to a subset of $\hilbert$.
So $A(x)\subset\hilbert$ for all $x\in \hilbert$.
For simplicity, we also write $Ax=A(x)$.
Write $I\colon\hilbert\rightarrow\hilbert$ for the identity operator.
We say $A\colon\hilbert\rightrightarrows\hilbert$ is monotone if
\[
\langle Ax-Ay,x-y\rangle \ge 0
\]
for all $x,y\in \hilbert$.
To clarify, the inequality means $\langle u-v,x-y\rangle\ge 0$ for all $u\in Ax$ and $v\in Ay$.
We say $A\colon\hilbert\rightrightarrows\hilbert$ is $\mu$-strongly monotone if
\[
\langle Ax-Ay,x-y\rangle \geq \mu\|x-y\|^2,
\]
where $\mu\in (0,\infty)$.
We say a single-valued operator $A\colon\hilbert\to\hilbert$ is $\beta$-cocoercive if
\[
\langle Ax-Ay,x-y\rangle\ge \beta\|Ax-Ay\|^2,
\]
where $\beta\in(0,\infty)$.
We say a single-valued operator $A\colon\hilbert\to\hilbert$ is $L$-Lipschitz if
\[
\|Ax-Ay\|\le L\|x-y\|
\]
where $L\in (0,\infty)$.
A monotone operator is maximal if it cannot be properly extended to another monotone operator.
The resolvent of an operator $A$ is $J_{\alpha A}=(I+\alpha A)^{-1}$, where $\alpha>0$.
 We say a single-valued operator $T\colon\hilbert\rightarrow\hilbert$ is contractive if it is $\rho$-Lipschitz with $\rho<1$.
We say $x^\star$ is a fixed point of $T$ if $x^\star=Tx^\star$.

Davis--Yin splitting (DYS) encodes solutions to
\[
\begin{array}{ll}
\underset{x\in \hilbert}{\mbox{find}}&0\in (A+B+C)x
\end{array}
\]
where $A$, $B$, and $C$ are maximal monotone and $C$ is single-valued, as fixed points of
\begin{equation}
T(z;A,B,C,\alpha,\theta)=z-\theta J_{\alpha B}z+\theta J_{\alpha A}(2J_{\alpha B}-I-\alpha CJ_{\alpha B})z
\label{eq:dys}
\end{equation}
where $\alpha>0$ and $\theta\ne 0$.
FBS and DRS are special cases of DYS; when $C = 0$ DYS reduces to DRS, and when $B = 0$ DYS reduces to FBS.
Therefore, our analysis on DYS directly applies to FBS and DRS.

\section{Operator interpolation}
\label{s:operator_interp}
Let $\Q$ be a class of operators, and let $\mathcal{I}$ be an arbitrary index set. We say a set of duplets
$\{(x_i , q_i)\}_{i\in \mathcal{I}}$, where $x_i,q_i\in\hilbert$ for all $i \in \mathcal{I}$, is $\Q$-{\emph{interpolable}} if there is an operator $Q \in \Q$
such that $q_i \in Qx_i$ for all $i \in \mathcal{I}$. In this case, we call $Q$ an {\emph{interpolation}} of $\{(x_i , q_i )\}_{i\in \mathcal{I}}$. In
this section, we present conditions that characterize when a set of duplets is interpolable
with respect to the class of operators listed in Table~\ref{tab:classes_summary} and their intersections.

\begin{table}[!h]
	\begin{center}
		{\renewcommand{\arraystretch}{1.3}
			\begin{tabular}{@{}ll@{}}
				\specialrule{2pt}{1pt}{1pt}
				\bfseries Class & \bfseries Description\\
				\hline
				$\M$ & maximal monotone operators \\ 
				$\M_{\mu}$ & $\mu$-strongly monotone maximal monotone operators\\
    $\Li_L$ & $L$-Lipschitz operators\\
    $\C_{\beta}$ & $\beta$-cocoercive operators\\
				\specialrule{2pt}{1pt}{1pt}
			\end{tabular}
		}
\caption{Operator classes for which we analyze interpolation. The parameters $\mu$, $L$, and
$\beta$  are in $(0, \infty)$. Note that $\M_{\mu} \subset \M$ for any $\mu > 0$, $\C_{\beta}  \subset \M$ for any $\beta  > 0$, but $\Li_L \not\subset \M$
for any $L > 0$.}
\label{tab:classes_summary}
\end{center}
\end{table}

\subsection{Interpolation with one class}
\label{sec:one_class_interp}
We now present interpolation results for the classes $\M$, $\M_{\mu}$, $\Li_L$, and $C_{\beta}$.
\begin{fact}[Maximal monotone extension theorem {\cite[Theorem 20.21]{BauschkeCombettes2017_convex}}]
$\{(x_i , q_i )\}_{i\in \mathcal{I}}$ is $\M$-interpolable if
and only if
\begin{align*}
\langle q_i - q_j, x_i - x_j  \rangle \geq 0 \qquad \forall i, j \in  \mathcal{I}.
\end{align*}
\label{prop:mm_ext}
\end{fact}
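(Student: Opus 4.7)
The plan is to split into the two implications. The necessity direction is essentially by definition: if $Q\in\M$ interpolates the data, then $q_i\in Qx_i$ and $q_j\in Qx_j$, so monotonicity of $Q$ gives $\langle q_i-q_j,x_i-x_j\rangle\ge 0$ for every $i,j\in I$. No appeal to maximality is needed here.

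The substantive direction is sufficiency, which I would prove via Zorn's lemma in the standard way for monotone extensions. Define $G_0=\{(x_i,q_i)\}_{i\in I}\subset\hilbert\times\hilbert$, and let $\F$ be the collection of subsets $G\subset\hilbert\times\hilbert$ such that $G\supseteq G_0$ and $\langle u-v,x-y\rangle\ge 0$ for all $(x,u),(y,v)\in G$. The hypothesis on $\{(x_i,q_i)\}$ is exactly the statement $G_0\in\F$, so $\F$ is nonempty. Order $\F$ by inclusion, and given any chain $\{G_\alpha\}\subset\F$, its union $\bigcup_\alpha G_\alpha$ lies in $\F$: any two points of the union lie in a common $G_\alpha$ by the chain property, so the monotonicity inequality is inherited. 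Zorn's lemma then yields a maximal element $G^\star\in\F$.

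I would then interpret $G^\star$ as the graph of the desired operator. Define $Q:\hilbert\rightrightarrows\hilbert$ by $Qx=\{u\in\hilbert\mid (x,u)\in G^\star\}$; by construction $Q$ is monotone and $q_i\in Qx_i$ for all $i\in I$. To see that $Q$ is maximal monotone, suppose $Q'\supseteq Q$ were a strictly larger monotone operator. Its graph would be an element of $\F$ strictly containing $G^\star$, contradicting maximality of $G^\star$ in $\F$. Hence $Q\in\M$ interpolates the data, completing the sufficiency direction.

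The main obstacle, such as it is, is conceptual rather than technical: one must be careful that ``maximal in $\F$'' (under the partial order restricted to sets containing $G_0$) implies ``maximal monotone'' in the usual sense (allowing comparison with arbitrary monotone operators). This is resolved by the observation in the previous paragraph, namely that any monotone extension of $Q$ automatically lies in $\F$ since it contains $G_0\subseteq G^\star$. Everything else is routine verification that unions of chains of monotone sets are monotone.
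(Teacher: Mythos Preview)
Your proposal is correct and follows essentially the same approach as the paper: the paper defines the monotone operator $Q_S$ from the data and then invokes the standard maximal monotone extension theorem as a black box, whereas you unpack that black box by carrying out the Zorn's lemma argument explicitly. The paper itself notes that the cited extension theorem is proved by a straightforward application of Zorn's lemma, which is exactly what you do.
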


\begin{prop} Let $\mu \in  (0, \infty )$. Then $\{(x_i,q_i)\}_{i\in \mathcal{I}}$ is $M_{\mu}$-interpolable if and only if
\begin{align*}
\langle q_i - q_j,x_i - x_j \rangle \geq \mu \|x_i - x_j\|^2\qquad \forall i, j \in \mathcal{I}.
\end{align*}
\label{prop:msm_ext}
\end{prop}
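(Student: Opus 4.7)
The plan is to reduce to Fact~\ref{prop:mm_ext} via the standard shift trick: an operator $Q$ is $\mu$-strongly monotone if and only if $Q - \mu I$ is monotone, so I would transfer the interpolation problem for $\M_\mu$ to an interpolation problem for $\M$ by shifting the $q_i$'s by $-\mu x_i$.

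For the necessity direction (``only if''), I would simply observe that if $Q \in \M_\mu$ interpolates $\{(x_i,q_i)\}_{i\in I}$, then $q_i \in Qx_i$ and $q_j \in Qx_j$, and $\mu$-strong monotonicity of $Q$ immediately yields $\langle x_i - x_j, q_i - q_j\rangle \geq \mu\|x_i-x_j\|^2$.

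For the sufficiency direction (``if''), I would introduce shifted duplets $(x_i, \tilde q_i)$ with $\tilde q_i := q_i - \mu x_i$. A direct expansion shows
\begin{equation*}
\langle x_i - x_j, \tilde q_i - \tilde q_j\rangle = \langle x_i-x_j, q_i-q_j\rangle - \mu\|x_i-x_j\|^2 \geq 0,
\end{equation*}
so the shifted duplets satisfy the hypothesis of Fact~\ref{prop:mm_ext}. Applying Fact~\ref{prop:mm_ext} yields a maximal monotone $\tilde Q \in \M$ with $\tilde q_i \in \tilde Q x_i$ for all $i\in I$. I would then define $Q := \tilde Q + \mu I$, observe $q_i = \tilde q_i + \mu x_i \in Q x_i$, and conclude that $Q$ is the desired $\mu$-strongly monotone interpolant.

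The only nontrivial point, and likely the main obstacle, is verifying that $Q = \tilde Q + \mu I$ actually lies in $\M_\mu$, i.e.\ that $Q$ is \emph{maximal} $\mu$-strongly monotone rather than merely $\mu$-strongly monotone. Strong monotonicity of $Q$ is immediate from monotonicity of $\tilde Q$. For maximality, I would invoke the fact that $\mu I$ is maximal monotone with full domain and is continuous, so that the sum $\tilde Q + \mu I$ of two maximal monotone operators is maximal monotone (this is a classical sum-theorem scenario in which the domain condition is trivial). Equivalently, one can argue directly: if $(x,q)$ monotonically extends the graph of $Q$, then $(x, q-\mu x)$ monotonically extends the graph of $\tilde Q$, contradicting maximality of $\tilde Q$. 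This closes the reduction and completes the proof.
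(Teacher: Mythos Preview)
Your proposal is correct and follows essentially the same approach as the paper: both reduce to Fact~\ref{prop:mm_ext} via the shift $\tilde q_i = q_i - \mu x_i$, obtain a maximal monotone interpolant $\tilde Q$, and set $Q = \tilde Q + \mu I$. If anything, you are slightly more careful than the paper in explicitly justifying that $Q \in \M_\mu$ (i.e.\ that maximality is preserved under adding $\mu I$), which the paper leaves implicit.
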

\begin{proof}
With Fact~\ref{prop:mm_ext}, the proof follows from a sequence of equivalences:
\begin{align*}
\forall i,j \in  \mathcal{I}, \,\langle q_i - q_j,x_i - x_j \rangle &\geq \mu\|x_i - x_j\|^2\\
&\quad\Leftrightarrow\quad \forall i,j\in  \mathcal{I}, \langle (q_i - \mu x_i) - (q_j - \mu x_j),x_i - x_j\rangle \geq  0\\
&\quad\Leftrightarrow\quad  \exists R \in  \M, \forall i \in \mathcal{I}, (q_i - \mu x_i) \in  Rx_i\\
&\quad\Leftrightarrow\quad  \exists Q \in  \M_{\mu}, Q = R + \mu I, \,\forall i \in  \mathcal{I}, \,q_i \in  Qx_i.
\end{align*}
\end{proof}
\begin{prop}
Let $\beta  \in  (0, \infty)$. Then $\{(x_i,q_i)\}_{i\in \mathcal{I}}$ is $\C_{\beta}$-interpolable if and only if
\begin{align*}
\langle q_i - q_j ,x_i - x_j\rangle \geq  \beta \|q_i - q_j\|^2 \qquad \forall  i, j \in  \mathcal{I}.
\end{align*}
\label{prop:coco_ext}
\end{prop}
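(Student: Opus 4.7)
The plan is to mirror the chain-of-equivalences proof of Proposition \ref{prop:msm_ext}, using as the central reduction the inversion duality between $\beta$-cocoercivity and $\beta$-strong monotonicity: $Q$ is $\beta$-cocoercive exactly when its graph-inverse $Q^{-1}$ is $\beta$-strongly monotone. The forward implication of the proposition is immediate from the definition of $\C_\beta$, so the real work lies in the reverse direction.

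For the reverse direction, I would first observe that the hypothesis can be read as $\langle q_i - q_j, x_i - x_j \rangle \geq \beta \|q_i - q_j\|^2$, which is precisely the $\M_\beta$-interpolation condition of Proposition \ref{prop:msm_ext} applied to the \emph{swapped} duplets $\{(q_i, x_i)\}_{i \in I}$ (with $\mu$ replaced by $\beta$). That proposition then supplies some $R \in \M_\beta$ with $x_i \in R q_i$ for all $i \in I$, and the natural candidate cocoercive interpolant is $Q := R^{-1}$, for which $q_i \in Q x_i$ holds by construction.

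The main obstacle is verifying that $Q = R^{-1}$ truly belongs to $\C_\beta$, which the preliminaries define as a single-valued operator on all of $\hilbert$ satisfying the cocoercivity inequality. Single-valuedness is a direct consequence of the $\beta$-strong monotonicity of $R$: if $u_1, u_2 \in R^{-1}(y)$, then $0 = \langle y - y, u_1 - u_2 \rangle \geq \beta \|u_1 - u_2\|^2$, forcing $u_1 = u_2$. For full domain of $R^{-1}$, equivalently surjectivity of $R$, I would invoke Minty's theorem applied to the maximal monotone operator $R - \beta I$, which gives $\text{range}(R) = \text{range}((R - \beta I) + \beta I) = \hilbert$. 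Once these two facts are in hand, rewriting the $\beta$-strong monotonicity inequality for $R$ in the variables $x := R u$, $y := R v$ (so $u = Qx$ and $v = Qy$ by single-valuedness) yields exactly $\langle x - y, Qx - Qy \rangle \geq \beta \|Qx - Qy\|^2$, which is $\beta$-cocoercivity of $Q$ and closes the equivalence.
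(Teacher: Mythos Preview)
Your proposal is correct and follows essentially the same route as the paper's proof: both reduce to Proposition~\ref{prop:msm_ext} via the swapped duplets $\{(q_i,x_i)\}_{i\in I}$ and then invert the resulting $R\in\M_\beta$ to obtain $Q=R^{-1}\in\C_\beta$. The paper's chain of equivalences is terser and leaves the verification that $R^{-1}$ is single-valued with full domain implicit, whereas you spell these out (single-valuedness from strong monotonicity, surjectivity from Minty applied to the maximal monotone $R-\beta I$); this extra care is sound and arguably makes the argument more self-contained.
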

\begin{proof}
With Proposition~\ref{prop:msm_ext}, the proof follows from a sequence of equivalences:
\begin{align*}
\forall i,j \in  \mathcal{I},\, \langle q_i - q_j,x_i - x_j\rangle& \geq  \beta \|q_i - q_j\|^2\\
&\quad\Leftrightarrow\quad  \exists R \in  \M_\beta, \forall i \in  \mathcal{I},\, x_i \in  Rq_i\\
&\quad\Leftrightarrow\quad  \exists Q \in  \C_{\beta}, Q = R^{-1} , \,\forall i \in  \mathcal{I},\, q_i \in  Qx_i.
\end{align*}
\end{proof}

\begin{fact}[Kirszbraun--Valentine Theorem] 
\label{fact:kvtheorem}
Let $L \in (0, \infty)$.
Then $\{(x_i,q_i)\}_{i\in \mathcal{I}}$ is $\Li_L$-interpolable if and only if
\begin{align*}
\|q_i - q_j\|^2 \le L^2 \|x_i - x_j\|^2 \qquad \forall  i, j \in  \mathcal{I}.
\end{align*}
\label{prop:Lip_ext}
\end{fact}
Fact~\ref{fact:kvtheorem} is a special case of the Kirszbraun--Valentine theorem \cite{Kirszbraun1934,valentine1943,valentine1945}.
A direct proof follows from similar arguments.

\subsection{Failure of interpolation with intersection of classes}
\label{sec:interp_fail}
When considering interpolation with intersections of classes such as $\M \cap \Li_L$, one
might naively expect results as simple as those of Section~\ref{sec:one_class_interp}. Contrary to this
expectation, interpolation can fail.
\begin{prop}
$\{(x_i,q_i)\}_{i\in \mathcal{I}}$ may not be $(\M \cap  \Li_L)$-interpolable for $L \in  (0, \infty)$ even if
\begin{align*}
\|q_i - q_j\|^2\le L^2\|x_i - x_j\|^2 ,\qquad
\langle q_i - q_j, x_i - x_j\rangle \geq 0\qquad
\forall  i, j \in  \mathcal{I}.
\end{align*}
\label{prop:interp_fail}
\end{prop}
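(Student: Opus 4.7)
The plan is to prove the proposition by exhibiting an explicit finite counterexample. The strategy exploits the fact that any $Q \in \M \cap \Li_L$ is single-valued and defined on \emph{all} of $\hilbert$ (by the definition of $\Li_L$ in the paper), so a putative interpolation must assign a value to every test point, and the monotonicity and Lipschitz constraints at such a test point can be made jointly infeasible even when the original pairwise inequalities look harmless.

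I would work in $\hilbert = \reals^2$ and take three duplets arranged symmetrically: $(x_1, q_1) = ((0,0),(0,0))$, $(x_2, q_2) = ((1,0),(0,L))$, and $(x_3, q_3) = ((-1,0),(0,L))$. The first step is to check the hypotheses. Each pairwise $x$-difference lies on the horizontal axis while each pairwise $q$-difference lies on the vertical axis (or is zero), so all three inner products $\langle x_i - x_j, q_i - q_j\rangle$ vanish; and $\|q_i - q_j\|^2 \le L^2\|x_i - x_j\|^2$ follows by direct computation in each case. Hence the pairwise monotonicity and Lipschitz inequalities of the proposition are satisfied.

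The second step is the contradiction. Suppose some $Q \in \M \cap \Li_L$ interpolates the three duplets. Fix the test point $y = (0, 1/2)$ and write $Q(y) = (a,b)$. Adding the two monotonicity inequalities $\langle Q(y) - q_2, y - x_2\rangle \ge 0$ and $\langle Q(y) - q_3, y - x_3\rangle \ge 0$ cancels the $a$-contribution (because $x_2 = -x_3$ and $q_2 = q_3$) and forces $b \ge L$. On the other hand, the Lipschitz bound between $y$ and $x_1 = 0$ (using $q_1 = 0$) reads $a^2 + b^2 \le L^2 \|y\|^2 = L^2/4$, which forces $|b| \le L/2$. For any $L > 0$ these bounds are incompatible, so no such $Q$ exists, which proves the proposition.

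The main obstacle is locating a configuration that sits on the boundary of both conditions at once. The symmetric pair $x_2 = -x_3$ with identical images $q_2 = q_3 = (0,L)$ is what makes the two monotonicity inequalities at the test point combine into a clean lower bound $b \ge L$; the choice of $x_1 = 0$ with $y$ near it is what makes the Lipschitz constraint force $|b| \le L/2 < L$. Breaking either symmetry or enlarging $\|y\|$ destroys one of the two pressures and allows an honest interpolation (for example, by a suitable rotation or by a constant map when the data is degenerate), which explains why the counterexample requires this precise, tight arrangement rather than a generic one.
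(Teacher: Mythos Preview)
Your proof is correct but takes a genuinely different route from the paper's. The paper uses a different three-point set in $\reals^2$, namely $((0,0),(0,0))$, $((1,0),(0,0))$, and $((1/2,0),(0,L/2))$, and derives the contradiction from a structural fact: for any maximal monotone $Q$ the set $\{x\mid Qx=0\}$ is convex \cite[Proposition~23.39]{BauschkeCombettes2017_convex}, so $Q(0,0)=Q(1,0)=0$ would force $Q(1/2,0)=0$, contradicting the third duplet. Your argument instead uses a symmetric configuration and an auxiliary test point $y=(0,1/2)$, combining two monotonicity inequalities to force the second coordinate of $Q(y)$ to be at least $L$ while the Lipschitz bound from the origin caps it at $L/2$. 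Your approach is more elementary---it uses nothing beyond the raw definitions of monotonicity and Lipschitz continuity at a single extra point---whereas the paper's argument is shorter and highlights a conceptual obstruction (convexity of level sets) that explains more broadly why such failures occur.
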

\begin{proof}
Consider the following example in $\reals^2$:
\begin{align*}
S = \left\{\left(\begin{bmatrix}0\\0\end{bmatrix},\begin{bmatrix}0\\0\end{bmatrix}\right),\left(\begin{bmatrix}1\\0\end{bmatrix},\begin{bmatrix}0\\0\end{bmatrix}\right),\left(\begin{bmatrix}1/2\\0\end{bmatrix},\begin{bmatrix}0\\L/2\end{bmatrix}\right)\right\}.
\end{align*}
These points satisfy the inequalities. 
However, there is no Lipschitz and maximal monotone operator interpolating these
points. Assume for contradiction that $Q \in (\M \cap \Li_L)$ is an interpolation of these points.
Since $Q$ is Lipschitz, it is single-valued. Since $Q$ is maximal monotone, the set $\{x \,|\, Qx = 0\}$
is convex \cite[Proposition 23.39]{BauschkeCombettes2017_convex}. This implies $Q(1/2, 0) = (0,0)$, which is a contradiction.
\end{proof}

The subtlety is that the counterexample has two separate interpolations in $\M$ and $\Li_L$
but does not have an interpolation in $\M \cap \Li_L$.
Interpolation with respect to $\M_{\mu}\cap \Li_L$, $\C_{\beta} \cap \Li_L$, and $\M_{\mu} \cap \C_{\beta}$ can fail in a similar manner.


\subsection{Two-point interpolation}
\label{sec:two_point_interp}
We now present conditions for two-point interpolation, i.e., interpolation when $|\mathcal{I}| = 2$. In
this case, interpolation conditions become simple, and the difficulty discussed in Section~\ref{sec:interp_fail} disappears. Although the setup $|\mathcal{I}| = 2$ may seem restrictive, it is sufficient for what we need in later sections.
\begin{prop}
Assume $0 < \mu$, $\mu \leq L < \infty$, and $\mu  \leq  1/\beta  < \infty$. Then $\{(x_1, q_1),(x_2,q_2)\}$ is $(\M_{\mu} \cap \C_{\beta} \cap  \Li_L)$-interpolable if and only if
\begin{align}
\nonumber\langle q_1 - q_2,x_1 - x_2\rangle \geq  \mu \|x_1 - x_2\|^2\\
\label{eq:two_point_MCL}\langle q_1 - q_2,x_1 - x_2\rangle \geq  \beta \|q_1 - q_2\|^2\\
\nonumber
\|q_1 - q_2\|^2\le  L^2 \|x_1 - x_2\|^2 .
\end{align}
\label{prop:two_point_MCL}
\end{prop}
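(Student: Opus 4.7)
The plan is to prove necessity directly and establish sufficiency by constructing an explicit affine interpolation. Necessity is immediate: the three inequalities in~\eqref{eq:two_point_MCL} are exactly the defining properties of $\M_\mu$, $\C_\beta$, and $\Li_L$ applied to the pair $\{(x_1, q_1), (x_2, q_2)\}$. For sufficiency, I first dispose of the degenerate case $x_1 = x_2$: the third inequality then forces $q_1 = q_2$, and the affine operator $Q(x) := q_1 + \mu(x - x_1)$ interpolates and lies in $\M_\mu \cap \C_\beta \cap \Li_L$ because $\mu \le L$ and $\mu\beta \le 1$ by hypothesis.

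For the main case $u := x_2 - x_1 \neq 0$, set $v := q_2 - q_1$ and try the affine ansatz $Q(x) := q_1 + M(x - x_1)$ with $M := aI + K$, where $a := \langle u, v\rangle / \|u\|^2$, $v_\perp := v - au$, and $K$ is the bounded skew-adjoint rank-$2$ operator
\begin{align*}
Kx := \frac{1}{\|u\|^2}\bigl(\langle u, x\rangle\, v_\perp - \langle v_\perp, x\rangle\, u\bigr).
\end{align*}
A one-line check gives $Mu = au + v_\perp = v$, so $Q$ interpolates the two points. The main conceptual hurdle that I expect to encounter is realizing that a purely self-adjoint $M$ does not suffice when $u$ and $v$ are far from parallel; the skew component is crucial because it contributes to $M^{*}M$ (and hence to Lipschitzness and cocoercivity) while leaving the symmetric part $aI$, which governs strong monotonicity, untouched.

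Verifying the three properties of $Q$ then reduces to a spectral computation for $M$. Since $K$ is skew-adjoint, $(M + M^{*})/2 = aI$ and $M^{*}M = a^{2}I - K^{2}$, and a direct calculation gives $-K^{2} = (\|v_\perp\|^{2}/\|u\|^{2})\,P$, where $P$ is the orthogonal projection onto $\mathrm{span}(u, v_\perp)$. Hence $M^{*}M$ acts as $(\|v\|^{2}/\|u\|^{2})\,I$ on $\mathrm{span}(u, v_\perp)$ and as $a^{2}I$ on its orthogonal complement. Strong monotonicity $M + M^{*} \succeq 2\mu I$ becomes $a \ge \mu$, the first inequality. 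Lipschitzness $M^{*}M \preceq L^{2}I$ reduces to $\|v\|^{2}/\|u\|^{2} \le L^{2}$ (the third inequality) together with $a^{2} \le (\|v\|/\|u\|)^{2} \le L^{2}$ by Cauchy--Schwarz. Finally, cocoercivity $aI \succeq \beta M^{*}M$ splits into $\langle u, v\rangle \ge \beta\|v\|^{2}$ on $\mathrm{span}(u, v_\perp)$ (the second inequality) and $a \le 1/\beta$ on its complement; the latter follows from the second inequality and Cauchy--Schwarz, since $\beta\|v\|^{2} \le \langle u, v\rangle \le \|u\|\|v\|$ gives $\beta\|v\| \le \|u\|$ and hence $\beta a \le \beta\|v\|/\|u\| \le 1$.
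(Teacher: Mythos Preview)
Your proof is correct and follows essentially the same strategy as the paper's: build an affine interpolant whose linear part acts as a scaled rotation on the two-dimensional span of $u$ and $v$ and as a scalar multiple of the identity on the orthogonal complement. The paper writes this out in coordinates by choosing an orthonormal basis $e_1,e_2$ for $\mathrm{span}\{x_1-x_2,q_1-q_2\}$ and displaying the $2\times 2$ matrix explicitly, whereas you package the same operator coordinate-free as $aI+K$ with $K$ the rank-two skew map; on $\mathrm{span}(u,v_\perp)$ the two linear parts coincide. Two small differences are worth noting: on the orthogonal complement the paper takes the scalar to be $\mu$ while you take it to be $a=\langle u,v\rangle/\|u\|^2$, and the paper separates out the subcases $\dim\hilbert=1$ and $q\propto x$, both of which your formulation absorbs automatically (when $v_\perp=0$ your $K$ vanishes and $M=aI$). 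Either choice of scalar on the complement works, and your version has the mild advantage of a unified argument with no case split beyond $x_1=x_2$.
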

\begin{proof}
If the points are $(\M_{\mu} \cap  \Li_L  \cap  \C_{\beta})$-interpolable, then \eqref{eq:two_point_MCL} holds by definition. Assume \eqref{eq:two_point_MCL} holds. When $\dim\hilbert = 1$ the result is trivial, so we assume, without loss of generality, $\dim\hilbert \geq 2$.

Define
$q = q_1 - q_2$ and $x = x_1 - x_2$.
If $x = 0$, then $\beta>0$ or $L>0$ implies $q=0$, 
and the operator $Q : \hilbert \to  \hilbert$ defined as
\begin{align*}
Q(y) = \mu (y - x_1 ) + q_1
\end{align*}
interpolates $\{(x_1,q_1),(x_2,q_2)\}$
and $Q\in \M_{\mu}  \cap  \Li_L  \cap \C_{\beta}$.
 Assume $x \neq 0$. If $q = \gamma x$ for some $\gamma \in  \reals$, then the operator $Q : \hilbert \to  \hilbert$ defined as
\begin{align*}
Q(y) = \gamma (y - x_1 ) + q_1
\end{align*}
interpolates $\{(x_1,q_1),(x_2,q_2)\}$
and $Q\in \M_{\mu}  \cap  \Li_L  \cap \C_{\beta}$.
Assume $q$ is linearly independent from $x$.
Define the orthonormal vectors,
\begin{align*}
e_1 =\frac{1}{\|x\|}x,\qquad
e_2 = \frac{1}{\sqrt{\|q\|^2-(\langle e_1,q\rangle)^2}}(q - \langle e_1,q\rangle e_1),
\end{align*}
along with an associated bounded linear operator $A : \hilbert \to \hilbert$ such that
\begin{align*}
A|_{\{e_1,e_2\}^\perp} = \mu I,
\end{align*}
where $\{e_1,e_2\}^\perp \subset \hilbert$ is the subspace orthogonal to $e_1$ and $e_2$ and $I$ is the identity mapping
on $\{e_1,e_2\}^\perp$. On ${\rm{span}}\{e_1,e_2\}$, define
\begin{align*}
Ae_1 &=\frac{\langle q,e_1\rangle}{\|x\|}e_1+\frac{\sqrt{\|q\|^2 - (\langle e_1,q\rangle)^2}}{\|x\|}e_2,\\
Ae_2 &= -\frac{\sqrt{\|q\|^2 - (\langle e_1,q\rangle)^2}}{\|x\|}e_1+\frac{\langle q,e_1\rangle}{\|x\|}e_2.
\end{align*}
Note that this definition satisfies $Ax = q$. Finally, define $M$ to be a $2 \times 2$ matrix isomorphic to $A|_{{\rm{span}}\{e_1,e_2\}}$, i.e.,
\begin{align*}
A|_{{\rm{span}}\{e_1,e_2\}} \cong\underbrace{\frac{1}{\|x\|}\begin{bmatrix}
\langle q,e_1\rangle & -\sqrt{\|q\|^2 - (\langle e_1,q\rangle)^2}\\
\sqrt{\|q\|^2 - (\langle e_1,q\rangle)^2}& \langle q,e_1\rangle
\end{bmatrix}}_{=M}\in\reals^{2\times 2}.
\end{align*}
With direct computations, we can verify that $M$ satisfies
\begin{align*}
L^2 &\geq  \lambda_{\max} (M^TM) = \frac{\|q\|^2}{\|x\|^2},\\
\mu  &\leq  \lambda_{\min} ((1/2)(M + M^T)) = \frac{\langle q, x\rangle}{\|x\|^2},\\
\beta &\leq  \lambda_{\min} ((1/2)(M^{-1} + M^{-T} )) =\frac{\langle q, x\rangle}{\|q\|^2}.
\end{align*}
This implies $A : \hilbert \to  \hilbert$ is $L$-Lipschitz, $\mu$-strongly monotone, and $\beta$-cocoercive. Finally,
the affine operator $Q : \hilbert \to  \hilbert$ defined as
\begin{align*}
Q(y) = A(y - x_1 ) + q_1
\end{align*}
interpolates $\{(x_1,q_1),(x_2,q_2)\}$
and $Q\in \M_{\mu}  \cap  \Li_L  \cap \C_{\beta}$.
\end{proof}

Proposition~\ref{prop:two_point_MCL} presents conditions for interpolation with 3 classes. Interpolation conditions with 2 of these classes,
such as
$(\C_{\beta} \cap \Li_L)$,
$(\M_{\mu} \cap \C_{\beta})$,
$(\M_{\mu} \cap \Li_L)$,
$(\M \cap \Li_L)$, and 
are of the same form and follow from the a very similar (identical) proof.

\section{Operator splitting performance estimation problems}
\label{sec:PEP}
Consider the \emph{operator splitting performance estimation problem} (OSPEP)
\begin{align}
\begin{tabular}{ll}
 $\displaystyle \maximize$ & $\displaystyle\frac{\|T(z;A,B,C,\alpha,\theta)- T(z';A,B,C,\alpha,\theta)\|^2}{\|z-z'\|^2}$\\
 subject to & $A\in\Q_1$, $B\in\Q_2$, $C\in\Q_3$\\
&$z,z'\in\hilbert$, $z\neq z'$
\end{tabular}
\label{eq:ospep_main}
\end{align}
where  $z$, $z'$, $A$, $B$, and $C$ are the optimization variables.
$T$ is the DYS operator defined in \eqref{eq:dys}.
The scalars $\alpha>0$ and $\theta>0$ and the classes $\Q_1$, $\Q_2$, and $\Q_3$ are problem data.
Assume each class $\Q_1$, $\Q_2$, and $\Q_3$ is a single operator class of Table~\ref{tab:classes_summary} or is an intersection of classes of Table~\ref{tab:classes_summary}. (So the reader can freely pick the assumptions; the minimal assumptions are that $\Q_1$, $\Q_2$, and $\Q_3$ are monotone).

By definition, $\rho$ is a valid contraction factor if and only if
\[
\rho^2\ge
\sup_{
\substack{A\in \Q_1,B\in \Q_2,C\in \Q_3,\\z,z'\in \hilbert,\,z\ne z'}}
\displaystyle\frac{\|T(z;A,B,C,\alpha,\theta)- T(z';A,B,C,\alpha,\theta)\|^2}{\|z-z'\|^2}.
\]
Therefore, the OSPEP, by definition, computes the square of the best contraction factor of $T$ given the assumptions on $A$, $B$, and $C$, encoded as the classes $\Q_1$, $\Q_2$, and $\Q_3$.
In fact, we say a contraction factor (established through a proof) is \emph{tight} if it is equal to the square root of the optimal value of \eqref{eq:ospep_main}.
A contraction factor that is not tight can be improved with a better proof without any further assumptions.

At first sight, \eqref{eq:ospep_main} seems difficult to solve, as it is posed as an infinite-dimensional non-convex optimization problem.
In this section, we present a reformulation of \eqref{eq:ospep_main} into a (finite-dimensional convex) SDP.
This reformulation is exact; it performs no relaxations or approximations, and the optimal value of the SDP coincides with that of \eqref{eq:ospep_main}.

\subsection{Convex formulation of OSPEP}
\label{ss:conv-ospep-transformation}
We now formulate \eqref{eq:ospep_main} into a (finite-dimensional) convex SDP through a series of equivalent transformations.
First, we write \eqref{eq:ospep_main} more explicitly as
\begin{align}
\begin{tabular}{ll}
 $\displaystyle \maximize$ & $\displaystyle\frac{\|z-\theta(z_B-z_A)-z'+\theta(z_B'-z_A')\|^2}{\|z-z'\|^2}$\\
 subject to & $A\in\Q_1$, $B\in\Q_2$, $C\in\Q_3$\\
 &$z_B=J_{\alpha B}z$\\
 &$z_C=\alpha Cz_B$\\
 &$z_A=J_{\alpha A}(2z_B-z- z_C)$\\
 &$z_B'=J_{\alpha B}z'$\\
 &$z_C'=\alpha Cz_B'$\\
 &$z_A'=J_{\alpha A}(2z_B'-z'- z_C')$\\
& $z,z'\in\hilbert$, $z\neq z'$
\end{tabular}
\label{eq:ospep_main2}
\end{align}
where $z,z'\in \hilbert$, $A$, $B$, and $C$ are the optimization variables.

\subsubsection{Homogeneity}
We say a class of operators $\Q$ is \emph{homogeneous} if 
\[
A\in \Q\quad\Leftrightarrow\quad (\gamma ^{-1}I)A(\gamma I)\in \Q
\]
for all $\gamma>0$.
All operator classes of Table~\ref{tab:classes_summary} are homogeneous.
Since $\Q_1$, $\Q_2$, and $\Q_3$ are homogeneous,
we can use the change of variables
$z\mapsto \gamma^{-1}z$,
$z'\mapsto \gamma^{-1}z'$,
$A\mapsto (\gamma ^{-1}I)A(\gamma I)$, 
$B\mapsto (\gamma ^{-1}I)B(\gamma I)$, and
$C\mapsto (\gamma ^{-1}I)C(\gamma I)$ where $\gamma=\|z-z'\|$
to equivalently reformulate \eqref{eq:ospep_main2} into 
\begin{align}
\begin{tabular}{ll}
 $\displaystyle \maximize$ & $\displaystyle\|z-\theta(z_B-z_A)-z'+\theta(z_B'-z_A')\|^2$\\
 subject to & $A\in\Q_1$, $B\in\Q_2$, $C\in\Q_3$\\
 &$z_B=J_{\alpha B}z$\\
 &$z_C=\alpha Cz_B$\\
 &$z_A=J_{\alpha A}(2z_B-z-z_C)$\\
 &$z_B'=J_{\alpha B}z'$\\
 &$z_C'=\alpha Cz_B'$\\
 &$z_A'=J_{\alpha A}(2z_B'-z'- z_C')$\\
 &$\|z-z'\|^2= 1$
\end{tabular}
\label{eq:ospep_main3}
\end{align}
where $z,z'\in \hilbert$, $A$, $B$, and $C$ are the optimization variables.

\subsubsection{Operator interpolation}
\label{ss:interp-sdp}
For simplicity of exposition, we limit the generality and reformulate the convex SDP under the following operator classes
\begin{itemize}
    \item $A\in \Q_1=\mathcal{M}_\mu$ --- $\mu$-strongly maximal monotone
    \item $B\in \Q_2=\mathcal{C}_{\beta}\cap \mathcal{L}_L$ --- $\beta$-cocoercive and $L$-Lipschitz 
    \item $C\in \Q_3=\mathcal{C}_{\beta_C}$ --- $\beta_C$-cocoercive
\end{itemize}
To clarify, the same analysis can be done in the general setup, and \textbf{we can freely pick and choose the assumptions}.
The general result is shown in the supplementary materials, in Section~\ref{appendix:full_primal_ospep}.

We use the interpolation results from Section~\ref{s:operator_interp}.
For operator $A$, we have
\begin{align*}
    &\exists A\in \mathcal{M}_{\mu}\text{ such that } z_A=J_{\alpha A}(2z_B-z- z_C),\,
    z_A'=J_{\alpha A}(2z_B'-z'- z_C')\\
    &\quad\Leftrightarrow\quad
    \{(z_A,\alpha^{-1}(2z_B-z- z_C-z_A)),(z_A',\alpha^{-1}(2z_B'-z'- z_C'-z_A'))\}\text{ is }\mathcal{M}_{\mu}\text{-interpolable}
    \\
    &\quad\Leftrightarrow\quad
    \langle z_A-z_A',2z_B-z- z_C-(2z_B'-z'- z'_C)\rangle
    \ge 
    (1+\alpha {\mu})\|z_A-z_A'\|^2.
\end{align*}
For operator $B$, we have
\begin{align*}
   \exists B\in\mathcal{C}_{\beta}\cap \mathcal{L}_{L} &\text{ such that }
    z_B=J_{\alpha B}z,\,
    z_B'=J_{\alpha B}z'\\
    \quad\Leftrightarrow\quad
    &\{(z_B,\alpha^{-1}(z-z_B)),(z_B',\alpha^{-1}(z'-z_B'))\}\text{ is }\mathcal{C}_{\beta}\text{-interpolable}\\
    &\{(z_B,\alpha^{-1}(z-z_B)),(z_B',\alpha^{-1}(z'-z_B'))\}\text{ is }\mathcal{L}_{L}\text{-interpolable}\\
    \quad\Leftrightarrow\quad&
    \langle z-z'-z_B+z_B',z_B-z_B'\rangle \ge (\beta/\alpha)\|z-z'-z_B+z_B'\|^2\\
    &\alpha^2 L^2\|z_B-z_B'\|^2\ge \|z-z'-z_B+z_B'\|^2.
\end{align*}
For operator $C$, we have\begin{align*}
    &\exists C\in\mathcal{C}_{\beta_C}\text{ such that }
    z_C=\alpha Cz_B,\,
    z_C'=\alpha Cz_B'\\
    &\quad\Leftrightarrow\quad
    \{(z_B,\alpha^{-1}z_C),(z_B',\alpha^{-1}z_C')\}\text{ is }\mathcal{C}_{\beta_C}\text{-interpolable}\\
    &\quad\Leftrightarrow\quad
    \langle z_B-z_B',z_C-z_C'\rangle \ge (\beta_C/\alpha)\|z_C-z_C'\|^2.
\end{align*}
Now we can drop the explicit dependence on the operators $A$, $B$, and $C$ and reformulate \eqref{eq:ospep_main3} into
\[
\begin{tabular}{ll}
 $\displaystyle \maximize_{}$ & $\displaystyle\|z-\theta(z_B-z_A)-z'+\theta(z_B'-z_A')\|^2$\\
 subject to & 
    $\langle z_A-z_A',2z_B-z- z_C-(2z_B'-z'- z'_C)\rangle
    \ge 
    (1+\alpha {\mu})\|z_A-z_A'\|^2$\\
    &$\langle z-z'-z_B+z_B',z_B-z_B'\rangle \ge (\beta/\alpha)\|z-z-z_B+z_B'\|^2$\\
    &$\alpha^2 L^2\|z_B-z_B'\|^2\ge \|z-z'-z_B+z_B'\|^2$\\
    &$\langle z_B-z_B',z_C-z_C'\rangle \ge (\beta_C/\alpha)\|z_C-z_C'\|^2$\\
 &$\|z-z'\|^2= 1$,
\end{tabular}
\]
where $z,z',z_A,z_A',z_B,z_B',z_C,z_C'\in \hilbert$ are the optimization variables.
Since the variables only appear as differences between the primed and non-primed variables, we can perform a change of variables $z-z'\mapsto z$, $z_A-z_A'\mapsto z_A$,  $z_B-z_B'\mapsto z_B$ and  $z_C-z_C'\mapsto z_C$ to get
\begin{align}
\begin{tabular}{ll}
 $\displaystyle \maximize_{}$ & $\displaystyle\|z-\theta(z_B-z_A)\|^2$\\
 subject to & 
    $\langle z_A,2z_B-z- z_C\rangle
    \ge 
    (1+\alpha {\mu})\|z_A\|^2$\\
    &$\langle z-z_B,z_B\rangle \ge (\beta/\alpha)\|z-z_B\|^2$\\
    &$\alpha^2 L^2\|z_B\|^2\ge \|z-z_B\|^2$\\
    &$\langle z_B,z_C\rangle \ge (\beta_C/\alpha)\|z_C\|^2$\\
 &$\|z\|^2= 1$,
\end{tabular}
\label{eq:ospep_main5}
\end{align}
where $z,z_A,z_B,z_C\in \hilbert$ are the optimization variables.


\subsubsection{Grammian representation}
The optimization problem \eqref{eq:ospep_main5} and all other operator classes in Section~\ref{s:operator_interp} are specified through inner products and squared norms.
This structure allows us to rewrite the problem with a Grammian representation:
\begin{equation}
G=
\begin{pmatrix}
\|z\|^2&\langle z,z_A\rangle &\langle z,z_B\rangle&\langle z,z_C\rangle\\
\langle z,z_A\rangle &\|z_A\|^2&\langle z_A,z_B\rangle &\langle z_A,z_C\rangle\\
\langle z,z_B\rangle&\langle z_A,z_B\rangle &\|z_B\|^2&\langle z_B,z_C\rangle\\
\langle z,z_C\rangle&\langle z_A,z_C\rangle&\langle z_B,z_C\rangle & \|z_C\|^2
\end{pmatrix}.
\label{eq:grammian}
\end{equation}

\begin{lemma}
\label{lem:cholesky}
If $\dim\hilbert\ge 4$, then 
\[
G\in \mathbb{S}_+^{4}
\quad\Leftrightarrow\quad
\exists z,z_A,z_B,z_C\in \hilbert
\text{ such that }G= \text{expression of \eqref{eq:grammian}}.
\]
\end{lemma}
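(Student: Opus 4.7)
The plan is to prove this as a standard Grammian-characterization: a symmetric matrix is positive semidefinite if and only if it is a Grammian of some vectors in a Hilbert space of sufficiently large dimension. Both directions are routine; the dimension hypothesis $\dim\hilbert \ge 4$ is used only in the $(\Rightarrow)$ direction to accommodate rank up to $4$.

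For the easy direction $(\Leftarrow)$, suppose $z, z_A, z_B, z_C \in \hilbert$ realize the Grammian $G$ as in \eqref{eq:grammian}. Then for any vector $v = (v_1, v_2, v_3, v_4)^T \in \reals^4$, set $u = v_1 z + v_2 z_A + v_3 z_B + v_4 z_C \in \hilbert$. A direct expansion gives $v^T G v = \langle u, u\rangle = \|u\|^2 \ge 0$, so $G \succeq 0$.

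For the nontrivial direction $(\Rightarrow)$, assume $G \in \mathbb{S}^4_+$. I would factor $G = L L^T$ for some $L \in \reals^{4\times 4}$, for instance via a Cholesky or eigenvalue decomposition (the latter works uniformly whether $G$ has full rank or not). Since $\dim\hilbert \ge 4$, pick any orthonormal family $\{e_1, e_2, e_3, e_4\} \subset \hilbert$, and let $\ell_1^T, \ell_2^T, \ell_3^T, \ell_4^T \in \reals^4$ denote the rows of $L$. Define
\begin{equation*}
z = \sum_{k=1}^{4} (\ell_1)_k\, e_k, \quad
z_A = \sum_{k=1}^{4} (\ell_2)_k\, e_k, \quad
z_B = \sum_{k=1}^{4} (\ell_3)_k\, e_k, \quad
z_C = \sum_{k=1}^{4} (\ell_4)_k\, e_k.
\end{equation*}
Then by orthonormality of $\{e_k\}$, the inner product $\langle z, z_A\rangle$ equals $\ell_1^T \ell_2 = (L L^T)_{12} = G_{12}$, and similarly for all other entries. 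Hence the constructed vectors realize the Grammian of the form \eqref{eq:grammian}.

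The only subtlety worth mentioning is the role of the dimension assumption: the factorization $G = L L^T$ may have rank up to $4$, so we need at least $4$ orthonormal directions in $\hilbert$ to host the vectors. If $\dim \hilbert < 4$, counterexamples arise from positive semidefinite $G$ of rank exceeding $\dim\hilbert$. Otherwise, there is no real obstacle — the argument is essentially just the classical equivalence between positive semidefiniteness and Grammian representability.
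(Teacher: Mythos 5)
Your proof is correct and matches the paper's argument: both directions are handled the same way, with the $(\Leftarrow)$ direction via $v^T G v = \|v_1 z + v_2 z_A + v_3 z_B + v_4 z_C\|^2 \ge 0$, and the $(\Rightarrow)$ direction via a factorization $G = LL^T$ whose rows are then embedded into $\hilbert$ along four orthonormal vectors. This is precisely the paper's Cholesky-based construction.
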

\begin{proof}
$(\Leftarrow)$ For any $z,z_A,z_B,z_C\in \hilbert$, $G$ is positive semidefinite since
\[
x^TGx=
\left\|x_1z+x_2z_A+x_3z_B+x_4z_C\right\|^2\ge 0 
\]
for any $x=(x_1,x_2,x_3,x_4)\in \reals^4$.

$(\Rightarrow)$
Let $LL^T=G$ be a Cholesky factorization of $G$. Write
\[
L=\begin{bmatrix}
\tilde{z}^T\\\tilde{z}_A^T\\\tilde{z}_B^T\\\tilde{z}_C^T
\end{bmatrix}
\]
where $\tilde{z},\tilde{z}_A,\tilde{z}_B,\tilde{z}_C\in \reals^4$.
We can find orthonormal vectors $e_1,e_2,e_3,e_4\in  \hilbert$ since $\dim\hilbert\ge 4$.
Define
\[
z=\tilde{z}_1e_1+\tilde{z}_2e_2+\tilde{z}_3e_3+\tilde{z}_4e_4,
\qquad
z_A=(\tilde{z}_A)_1e_1+(\tilde{z}_A)_2e_2+(\tilde{z}_A)_3e_3+(\tilde{z}_A)_4e_4.
\]
Define $z_B,z_C\in \hilbert$ similarly.
Then $G$ is as given by \eqref{eq:grammian} with the constructed $z,z_A,z_B,z_C\in \hilbert$.
\end{proof}


Write\begin{align*}
M_I&=
\begin{pmatrix}
1&0&0&0\\
0&0&0&0\\
0&0&0&0\\
0&0&0&0
\end{pmatrix},
&
M_O&=
\begin{pmatrix}
1&\theta&-\theta&0\\
\theta&\theta^2&-\theta^2&0\\
-\theta&-\theta^2&\theta^2&0\\
0&0&0&0
\end{pmatrix},\\
M_\mu^A&=\begin{pmatrix}
0&-1/2&0&0\\
-1/2&-1-\alpha \mu&1&-1/2\\
0&1&0&0\\
0&-1/2&0&0
\end{pmatrix},
&
M_\beta^C&=\begin{pmatrix}
0&0&0&0\\
0&0&0&0\\
0&0&0&1/2\\
0&0&1/2&-\beta_C/\alpha\\
\end{pmatrix},\\
M_\beta^B&=
\begin{pmatrix}
-\beta/\alpha &0&1/2+\beta/\alpha&0\\
0&0&0&0\\
1/2+\beta/\alpha&0&-1-\beta/\alpha&0\\
0&0&0&0
\end{pmatrix},
&
M_L^B&=
\begin{pmatrix}
-1&0&1&0\\
0&0&0&0\\
1&0&-1+\alpha^2L^2&0\\
0&0&0&0
\end{pmatrix}.
\end{align*}
When $\dim\hilbert \ge 4$, we can use Lemma~\ref{lem:cholesky}
to reformulate \eqref{eq:ospep_main5} into the equivalent SDP
\begin{align}
\begin{tabular}{ll}
 $\displaystyle \maximize_{}$ & $ \tr(M_{O}G)$\\
 subject to 
    &$ \tr(M_\mu^A G)\ge 0$\\
    &$ \tr(M_\beta^B G)\ge 0$\\
    &$ \tr(M_L^B G)\ge 0$\\
    &$ \tr(M_\beta^C G)\ge 0$\\
    &$ \tr(M_{I}G)=1$\\
    &$G\succeq 0$
\end{tabular}
\label{eq:ospep_main6}
\end{align}
where $G\in \mathbb{S}_+^4$ is the optimization variable.
Since \eqref{eq:ospep_main6} is a finite-dimensional convex SDP, we can solve it efficiently with standard solvers.



These equivalent reformulations prove Theorem~\ref{thm:exactness} for this special case.
The general case follows from analogous steps, and we show the fully general SDP in the supplementary materials, in Section~\ref{appendix:full_primal_ospep}.

\begin{theorem}
\label{thm:exactness}
The OSPEP \eqref{eq:ospep_main} and the SDP of Section~\ref{appendix:full_primal_ospep} are equivalent
if $\dim\hilbert\ge 4$ and $\Q_1=\mathcal{M}_{\mu_A}\cap \mathcal{C}_{\beta_A}\cap \mathcal{L}_{L_A}$,
$\Q_2=\mathcal{M}_{\mu_B}\cap \mathcal{C}_{\beta_B}\cap \mathcal{L}_{L_B}$,
and 
$\Q_3=\mathcal{M}_{\mu_C}\cap \mathcal{C}_{\beta_C}\cap \mathcal{L}_{L_C}$.
\end{theorem}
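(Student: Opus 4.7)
The plan is to mirror the four-step reduction already performed for the special case in Section~\ref{ss:conv-ospep-transformation}, and show that each step goes through verbatim when each of $\Q_1$, $\Q_2$, $\Q_3$ is replaced by a triple intersection $\M_{\mu} \cap \C_{\beta} \cap \Li_L$ (with suitable subscripts). Concretely, I would (i) expand \eqref{eq:ospep_main} into an explicit problem in variables $z,z',z_A,z_A',z_B,z_B',z_C,z_C'$ using the resolvent identity $u = J_{\alpha Q}v \Leftrightarrow \alpha^{-1}(v-u) \in Qu$, (ii) scale by $\gamma = \|z-z'\|$ and use homogeneity of the classes in Table~\ref{tab:classes_summary} to impose $\|z-z'\|^2 = 1$, (iii) replace the existential ``there exist operators $A\in\Q_1$, $B\in\Q_2$, $C\in\Q_3$'' by two-point interpolation inequalities, (iv) translate to $z-z',\ldots,z_C-z_C'\mapsto z,\ldots,z_C$ and pass to a Grammian $G\in\mathbb{S}_+^n$ indexed by the remaining vectors.

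The core of the argument is step (iii). Because $z$ and $z'$ are the only points at which each of $A,B,C$ is evaluated, each operator is constrained only through two input/output pairs, and this is exactly the regime covered by Propositions~\ref{prop:two_point_MCL}--\ref{prop:two_point_ML2}. I would invoke Proposition~\ref{prop:two_point_MCL} three times, once for $A$, once for $B$, once for $C$, producing up to three inequalities per operator (strong monotonicity, cocoercivity, and Lipschitz), where each inequality is affine in inner products and squared norms of the difference vectors. Substituting the resolvent-induced identities gives, for operator $A$ with classes $\M_{\mu_A}\cap\C_{\beta_A}\cap\Li_{L_A}$, inequalities of the form
\begin{align*}
\langle z_A-z_A', 2z_B-z-z_C-(2z_B'-z'-z_C')\rangle &\ge (1+\alpha\mu_A)\|z_A-z_A'\|^2,\\
\alpha\langle z_A-z_A',2z_B-z-z_C-(2z_B'-z'-z_C')-(z_A-z_A')\rangle &\ge \beta_A\|2z_B-z-z_C-(2z_B'-z'-z_C')-(z_A-z_A')\|^2,\\
\|2z_B-z-z_C-(2z_B'-z'-z_C')-(z_A-z_A')\|^2 &\le \alpha^2 L_A^2\|z_A-z_A'\|^2,
\end{align*}
and analogous inequalities for $B$ and $C$; these are the constraints listed in Section~\ref{appendix:full_primal_ospep}.

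For step (iv), after the translation to differences, the problem becomes a quadratic problem in four Hilbert-space vectors whose data are inner products. Using Lemma~\ref{lem:cholesky} (whose statement already anticipates the need for $\dim\hilbert\ge 4$, so that a $4\times 4$ positive semidefinite Grammian can always be realized by actual vectors in $\hilbert$), the feasible set of Grammians $G\succeq 0$ is in bijection with the feasible set of the non-convex problem, and the objective and all constraints become linear functionals of $G$, giving the SDP. I would then state explicitly that each equivalence preserves both the optimal value and solutions (reconstructing a maximizer of \eqref{eq:ospep_main} from $G^\star$ via Cholesky and the two-point interpolation constructions).

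The main obstacle is ensuring tightness at the interpolation step: naively intersecting the single-class interpolation results of Section~\ref{sec:one_class_interp} would yield inequalities that are necessary but not sufficient (by Proposition~\ref{prop:interp_fail}), and the resulting SDP would be a relaxation rather than an equivalent reformulation. The whole point of the two-point interpolation results of Section~\ref{sec:two_point_interp} is to sidestep this failure, and the theorem holds precisely because OSPEP involves only two evaluations per operator. A secondary, minor obstacle is bookkeeping: the general SDP has a $7\times 7$ Grammian (for $z$, $z_A,z_A'$, $z_B,z_B'$, $z_C,z_C'$ — or equivalently a $4\times 4$ Grammian on the differences plus the $\|z\|^2=1$ normalization), and the dimension requirement in Lemma~\ref{lem:cholesky} must be restated to match. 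Once these are in place, the theorem follows by chaining the equivalences of (i)--(iv) in both directions.
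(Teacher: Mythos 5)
Your proposal is correct and follows essentially the same route as the paper: expand the resolvents, normalize via homogeneity, replace the existential quantifiers over $A,B,C$ by the two-point interpolation conditions of Section~\ref{sec:two_point_interp} (exactly Proposition~\ref{prop:two_point_MCL} for each operator), translate to difference variables, and pass to a $4\times 4$ Grammian via Lemma~\ref{lem:cholesky}. The only slip is a harmless one: the paper translates to differences \emph{before} forming the Grammian, so the $4\times 4$ version of Lemma~\ref{lem:cholesky} with $\dim\hilbert\ge 4$ is already exactly what is needed and requires no restatement (the $7\times 7$ alternative you mention would impose a stronger dimension requirement for no benefit).
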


To clarify, Theorem~\ref{thm:exactness} states that the optimal values of the two problems are equal and that a solution from one problem can be transformed into a solution of another.
Given an optimal $G^\star$ of the SDP, we can take its Cholesky factorization as in Lemma~\ref{lem:cholesky} to get $z,z_A,z_B,z_C\in \hilbert$ and obtain evaluations of the worst-case operators
\begin{gather*}
A(z_A)\ni\alpha^{-1}(2z_B-z- z_C-z_A),\quad A(0)\ni 0,\quad\text{ where }A\in \Q_1\\
B(z_B)\ni\alpha^{-1}(z-z_B),\quad B(0)\ni 0,\quad\text{ where }B\in \Q_2\\
C(z_B)\ni \alpha^{-1}z_C,\quad C(0)\ni 0,\quad\text{ where }C\in \Q_3.
\end{gather*}

\subsection{Dual OSPEP}
The SDP \eqref{eq:ospep_main6} has a dual:
\begin{equation}
\begin{tabular}{ll}
 $\displaystyle \minimize{}$ & $ \rho^2$\\
 subject to &$\lambda_\mu^A,\lambda_\beta^B,\lambda_L^B,\lambda_\beta^C\ge 0$\\
 &$S(\rho^2,\lambda_\mu^A,\lambda_\beta^B,\lambda_L^B,\lambda_\beta^C,\theta,\alpha)\succeq 0$
\end{tabular}
\label{eq:dual-ospep}
\end{equation}
where $\rho^2,\lambda_\mu^A,\lambda_\beta^B,\lambda_L^B,\lambda_\beta^C\in\reals$ are the optimization variables and
\begin{align}
 &S(\rho^2,\lambda_\mu^A,\lambda_\beta^B,\lambda_L^B,\lambda_\beta^C,\theta,\alpha)
 =-M_O-\lambda_\mu^A M_\mu^A-\lambda_\beta^B M_\beta^B-\lambda_L^B M_L^B
-\lambda_\beta^{C}M_\beta^{C}
+\rho^2M_I
 \label{eq:Smat-S3}
\\
&=
\begin{pmatrix}
\rho^2+\frac{\lambda_\beta^B \beta}{\alpha}+\lambda_L^B-1
&\frac{\lambda_\mu^A}{2}-\theta
&-\lambda_\beta^B(\frac{1}{2}+\frac{\beta}{\alpha})-\lambda_L^B+\theta&0\\
\frac{\lambda_\mu^A}{2}-\theta&
\lambda_\mu^A(1+\alpha \mu)-\theta^2&-\lambda_\mu^A+\theta^2&\frac{\lambda_\mu^A}{2}\\
-\lambda_\beta^B(\frac{1}{2}+\frac{\beta}{\alpha})-\lambda_L^B+\theta&
-\lambda_\mu^A+\theta^2
&
\lambda_\beta^B(\frac{\beta}{\alpha}-1)+\lambda_L^B(1-\alpha^2L^2)-\theta^2&-\frac{\lambda_\beta^C}{2}
\\
0&\frac{\lambda_\mu^A}{2}&-\frac{\lambda_\beta^C}{2}&\frac{\lambda_\beta^C\beta_C}{\alpha}
\end{pmatrix}.\nonumber
\end{align}
We call \eqref{eq:dual-ospep} the \emph{dual OSPEP}. 
In contrast, we call the OSPEP \eqref{eq:ospep_main}, and equivalently \eqref{eq:ospep_main6}, the \emph{primal OSPEP}.
Again, this special case illustrates the overall approach.
We show the fully general dual OSPEP in the supplementary materials, in Section~\ref{appendix:full_dual_ospep}.

To ensure strong duality between the primal and dual OSPEPs, we enforce Slater's constraint qualification with the following notion of degeneracy.
We say the intersections 
$\mathcal{C}_{\beta}\cap \mathcal{L}_{L}$,
$\mathcal{M}_{\mu}\cap \mathcal{C}_{\beta}$,
$\mathcal{M}_{\mu}\cap \mathcal{L}_{L}$, and
$\mathcal{M}_{\mu}\cap \mathcal{C}_{\beta}\cap \mathcal{L}_{L}$
are respectively \emph{degenerate} if
$\mathcal{C}_{\beta+\varepsilon}\cap \mathcal{L}_{L-\varepsilon}=\emptyset$,
$\mathcal{M}_{\mu+\varepsilon}\cap \mathcal{C}_{\beta+\varepsilon}=\emptyset$,
$\mathcal{M}_{\mu+\varepsilon}\cap \mathcal{L}_{L-\varepsilon}=\emptyset$, and 
$\mathcal{M}_{\mu+\varepsilon}\cap \mathcal{C}_{\beta+\varepsilon}\cap \mathcal{L}_{L-\varepsilon}=\emptyset$
for all $\varepsilon>0$.
For example, $\mathcal{M}_{3}\cap \mathcal{L}_{3}=\{3I\}$ is a degenerate intersection.

\begin{theorem}
\label{thm:duality}
Weak duality holds between the primal and dual OSPEPs of Sections~\ref{appendix:full_primal_ospep} and \ref{appendix:full_dual_ospep}.
Furthermore, strong duality holds if each class $\Q_1$, $\Q_2$, and $\Q_3$ is a non-degenerate intersection of classes of Table~\ref{tab:classes_summary}. 
\end{theorem}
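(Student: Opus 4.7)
The plan is to treat this as standard Lagrangian duality for a finite-dimensional SDP and then verify Slater's condition in the non-degenerate case. For weak duality, I would take the Lagrangian of the primal SDP by pairing $\lambda_i \ge 0$ with each interpolation inequality $\tr(M_i G) \ge 0$, a scalar $\rho^2 \in \reals$ with the equality $\tr(M_I G) = 1$, and $S \succeq 0$ with the conic constraint $G \succeq 0$; collecting terms, the identity $S = -M_O - \sum_i \lambda_i M_i + \rho^2 M_I$ in \eqref{eq:Smat-S3} is exactly what forces the coefficient of $G$ in the Lagrangian to vanish. For any primal-feasible $G$ and dual-feasible $(\rho^2, \lambda, S)$, this identity gives
\[
\rho^2 - \tr(M_O G) \;=\; \sum_i \lambda_i \tr(M_i G) + \tr(SG) \;\ge\; 0,
\]
since $\lambda_i \ge 0$, $\tr(M_i G) \ge 0$, and $\tr(SG) \ge 0$ whenever $S, G \succeq 0$. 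This is weak duality.

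For strong duality I would invoke the primal Slater condition: it is a classical fact that if a linear SDP in this form admits a strictly feasible $G \succ 0$ for which every inequality constraint is strict, then strong duality holds and the dual optimum is attained. The substantive task is to exhibit such a $G$ under the non-degeneracy hypothesis. By assumption, for each class $\Q_j$ there exists $\varepsilon > 0$ such that the class with parameters tightened by $\varepsilon$ (increasing the strong-monotonicity and cocoercivity bounds, decreasing the Lipschitz bound) is still non-empty. I would pick operators $A$, $B$, $C$ in these strictly tighter classes together with a unit vector $z \in \hilbert$, carry out one DYS iteration to produce $z_A, z_B, z_C$, and form the associated Grammian via \eqref{eq:grammian}. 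The $\varepsilon$-slack baked into the interpolation inequalities guarantees that every inequality $\tr(M_i G) \ge 0$ is satisfied strictly, while $\tr(M_I G) = \|z\|^2 = 1$ by construction.

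The main obstacle---and the only place where $\dim \hilbert \ge 4$ combined with non-degeneracy actually bites---is arranging $(z, z_A, z_B, z_C)$ to be linearly independent, so that $G \succ 0$. I would handle this by choosing the operators to act sufficiently generically on a four-dimensional subspace, for instance affine operators whose linear parts combine scalar multiples of the identity on orthogonal directions with small rotations in the spirit of the construction in the proof of Proposition~\ref{prop:two_point_MCL}, tuned so that none of the iterates accidentally aligns with $z$ or with each other. In the degenerate regime, by contrast, the example $\M_3 \cap \Li_3 = \{3I\}$ forces every admissible operator to be a scalar, so all iterates collapse onto $\mathrm{span}(z)$ and $G$ is necessarily rank one; this is why the non-degeneracy hypothesis genuinely cannot be dropped.
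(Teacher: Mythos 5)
Your weak duality argument matches the paper's in substance: the paper simply observes that the SDP of Section~\ref{appendix:full_dual_ospep} is the Lagrange dual of that of Section~\ref{appendix:full_primal_ospep}, and your Lagrangian computation is exactly what that means.

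For strong duality you correctly reach for Slater's condition and correctly read the non-degeneracy hypothesis as supplying operators in $\varepsilon$-tightened classes so that all the interpolation inequalities $\tr(M_i G)\ge 0$ hold strictly. Where your route diverges from the paper's and becomes problematic is the step you flag yourself: forcing $G\succ 0$. You propose to do this by engineering operators whose DYS iterates $z,z_A,z_B,z_C$ are linearly independent, but you leave this as a sketch, and it is in fact the hardest part of your plan --- the tightened classes can be quite restrictive (e.g.\ when $\beta$ is close to $1/L$ in $\C_\beta\cap\Li_L$), so ``tune the operators to act generically'' needs real work to justify. The paper avoids this entirely with a one-line perturbation: starting from any feasible $G$ (possibly singular) with strict inequality constraints, set $G_\delta=(1-\delta)G+\delta I$. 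Then $G_\delta\succ 0$ for any $\delta\in(0,1)$, the strict inequalities persist for small $\delta$ by continuity, and --- the key observation --- the equality constraint $\tr(M_I G_\delta)=1$ is preserved automatically because $\tr(M_I)=1$. This sidesteps linear independence of the iterates altogether. Your closing remark about the degenerate case also misattributes the failure: the obstruction there is not primarily that $G$ is rank one, but that a singleton class like $\M_3\cap\Li_3=\{3I\}$ forces some interpolation inequality to hold with equality for every admissible operator, so no strictly feasible $G$ can exist regardless of its rank; the rank-one collapse is a symptom, not the cause. I would replace your generic-operator construction with the paper's perturbation trick, and rephrase the degeneracy discussion in terms of the inequality constraints becoming necessarily tight.
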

\begin{proof}
Weak duality follows from the fact that the SDP of Section~\ref{appendix:full_dual_ospep} is the Lagrange dual of the SDP of Section~\ref{appendix:full_primal_ospep}.
To establish strong duality, we show that the non-degeneracy assumption leads to Slater's constraint qualification \cite{rockafellar1974} for the primal OSPEP.

Since the intersections are non-degenerate, there is a small $\varepsilon>0$ and $A$, $B$, and $C$ such that
\begin{align*}
A&\in \mathcal{M}_{\mu_A+\varepsilon}\cap \mathcal{C}_{\beta_A+\varepsilon}\cap \mathcal{L}_{L_A-\varepsilon}\\
B&\in \mathcal{M}_{\mu_B+\varepsilon}\cap \mathcal{C}_{\beta_B+\varepsilon}\cap \mathcal{L}_{L_B-\varepsilon}\\
C&\in \mathcal{M}_{\mu_C+\varepsilon}\cap \mathcal{C}_{\beta_C+\varepsilon}\cap \mathcal{L}_{L_C-\varepsilon}.
\end{align*}
With any inputs $z,z'\in \hilbert$ such that $z\ne z'$, we can follow the arguments of Section~\ref{ss:conv-ospep-transformation}
and construct a $G$ matrix as defined in \eqref{eq:grammian}.
This $G$ satisfies
\[
\tr(M_\mu^AG)>0,\,\dots,\,\tr(M^C_LG)>0,\,\tr(M_IG)=1,\,G\succeq 0.
\]
Define $G_\delta=(1-\delta)G+\delta I$.
There exists a small $\delta>0$ such that
\[
\tr(M_\mu^AG_\delta)>0,\,\dots,\,\tr(M^C_LG_\delta)>0,\,\tr(M_IG_\delta)=1,\,G_\delta\succ 0.
\]
Note that the equality constraint $\tr(M_IG_\delta)=1$ holds since $\tr(M_I)=1$.
Since $G_\delta$ is a strictly feasible point, Slater's condition gives us strong duality.
\end{proof}

More generally, the strong duality argument of Theorem~\ref{thm:duality} applies if each $\Q_1$, $\Q_2$, and $\Q_3$ is a single operator class of Table~\ref{tab:classes_summary} or is a non-degenerate intersection of those classes.

\subsection{Primal and dual interpretations and computer-assisted proofs}
A feasible point of the primal OSPEP provides a lower bound on any contraction factor as it corresponds to operator instances that exhibit a contraction corresponding to the objective value.
An optimal point of the primal OSPEP corresponds to the worst-case operators.
A feasible point of the dual OSPEP provides an upper bound as it corresponds to a proof of a contraction factor.
A convergence proof in optimization is a nonnegative combination of known valid inequalities.
The nonnegative variables of the dual OSPEP correspond to weights of such a nonnegative combination, and the objective value is the contraction factor the nonnegative combination of inequalities (i.e., the proof) proves.

We can use the OSPEP methodology as a tool for computer-assisted proofs.
Given the operator classes, we can choose specific numerical values for the parameters, such as the strong convexity and cocoercivity parameters, and numerically solve the SDP.
We do this for many parameter values, observe the pattern of primal and dual solutions, and guess the analytical, parameterized solution to the SDPs.
To put it differently, the SDP solver provides a valid and optimal proof for a given choice of parameters, and we use this to infer 

\subsection{Further remarks}
With analogous steps, the OSPEPs for FBS and DRS can be written as smaller $3\times 3$ SDPs.
Using the smaller SDP is preferred, as formulating these cases into larger $4\times 4$ SDPs, as a special case of the $4\times 4$ SDP for DYS, can lead to numerical difficulties.

The tightness of the OSPEP methodology relies on the two-point interpolation results of Section~\ref{s:operator_interp}, which we can use because the operators $A$, $B$, and $C$ are evaluated \emph{once} per iteration.
(To analyze the contraction factor, we consider a single evaluation of the operator at two distinct points, which leads to two evaluations of each operator.)
For splitting methods without this property, methods that access one of the operators twice or more per iteration, the OSPEP loses the tightness guarantee.
Such methods include the extragradient method \cite{korpelevich1976}, FBF \cite{tseng2000}, PDFP \cite{Chen2016}, Extragradient-Based Alternating Direction Method for Convex Minimization \cite{Lin2017}, FBHF \cite{davis2018}, FRB \cite{malitsky2018}, Golden ratio algorithm \cite{malitsky2018_golden}, Shadow-Douglas-Rachford \cite{csetnek2019}, and BFRB/BRFB \cite{rieger2020}.
Nevertheless, the OSPEP is applicable for analyzing these types of methods and, in particular, can be used to find the convergence proofs presented in these references.

\section{Tight analytic contraction factors for DRS} 
\label{sec:tight_DRS}
In this section, we present tight analytic contraction factors for DRS under two sets of assumptions considered in \cite{giselsson2017tight,moursi2018douglas}.
The primary purpose of this section is to demonstrate the strength of the OSPEP methodology through proving results that are likely too complicated for a human to find bare-handed.
The proofs are computer-assisted in that their discoveries were assisted by a computer, but their verifications do not require a computer.


The results below are presented for $\alpha=1$.
 The general rate for $\alpha>0$ follows from the scaling $\mu\mapsto \alpha \mu$, $\beta\mapsto \beta/\alpha$, and $L\mapsto \alpha L$.
The proofs are presented in the supplementary materials, in Section~\ref{s:main-proof}.

\begin{theorem}\label{thm:DRS_coco_strmonotone}
Let $A\in\M_\mu$ and $B\in\C_\beta$ with $\mu,\beta>0$, and assume $\dim\hilbert \geq 3$.
The \textbf{tight} contraction factor of the DRS operator $I-\theta J_B+\theta J_A(2J_B-I)$ for $\theta\in(0,2)$ is
\begin{equation*}
	\rho=\left\{\begin{array}{ll}
	\lvert1-\theta \tfrac{\beta}{\beta+1}\rvert	&  \text{ if } \mu\beta-\mu+\beta<0 \text{ and } \theta \leq 2 \tfrac{(\beta+1) (\mu-\beta-\mu\beta)}{\mu+\mu\beta-\beta-\beta^2-2\mu\beta^2},\\
	\lvert 1-\theta\tfrac{1+\mu\beta}{(\mu+1)(\beta+1)}\rvert	&  \text{ if } \mu\beta-\mu-\beta>0 \text{ and } \theta \leq 2  \tfrac{\mu^2+\beta^2+\mu\beta+\mu+\beta-\mu^2\beta^2}{\mu^2+\beta^2+\mu^2\beta+\mu\beta^2+\mu +\beta-2\mu^2\beta^2},\\
	\lvert 1-\theta \rvert	&  \text{ if } \theta \geq 2 \tfrac{\mu\beta+\mu+\beta}{2\mu\beta+\mu+\beta},\\
	\lvert 1-\theta\tfrac{\mu}{\mu +1} \rvert 	& \text{ if } \mu\beta+\mu-\beta<0 \text{ and }\theta \leq 2\tfrac{(\mu+1) (\beta-\mu-\mu\beta)}{\beta+\mu\beta-\mu-\mu^2-2\mu^2\beta},\\
	\rho_5	& \text{ otherwise,}
	\end{array}\right.
\end{equation*}
with
\[\rho_5=\tfrac{\sqrt{2-\theta}}{2} \sqrt{\tfrac{((2-\theta)\mu (\beta+1)+{\theta}  \beta(1-\mu))\, ((2-\theta)\beta ({\mu}+1)+{\theta}\mu  (1-\beta))}{\mu\beta(2\mu \beta (1-\theta )  + (2-\theta ) (\mu+\beta +1))}}.\]
\end{theorem}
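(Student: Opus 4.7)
Since DRS is the $C=0$ special case of DYS, my plan is first to specialize the OSPEP construction of Section~\ref{sec:PEP} to $T = I - \theta J_B + \theta J_A(2J_B - I)$ with $\alpha=1$, $\Q_1=\M_\mu$, $\Q_2=\C_\beta$. This collapses the $4\times 4$ primal--dual pair of~\eqref{eq:ospep_main6}--\eqref{eq:dual-ospep} to a $3\times 3$ SDP in a Gram matrix of $(z,z_A,z_B)$ with two inequality constraints (the two-point interpolation conditions from Propositions~\ref{prop:msm_ext} and~\ref{prop:coco_ext}), the normalization $\|z\|^2=1$, and a PSD constraint. The corresponding dual minimizes $\rho^2$ over two nonnegative multipliers $\lambda_\mu^A, \lambda_\beta^B$ subject to a PSD $3\times 3$ slack $S(\rho^2,\lambda_\mu^A,\lambda_\beta^B,\theta)$. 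By Theorems~\ref{thm:exactness} and~\ref{thm:duality}, the proof reduces to exhibiting matching primal and dual optima in each parameter region.

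Next, following the computer-assisted recipe described in Section~\ref{sec:PEP}, I would solve this $3\times 3$ SDP numerically on a dense grid of $(\mu,\beta,\theta)$ to detect which dual multipliers vanish and where $S$ drops rank. The five cases of the theorem correspond to the five possible complementarity patterns: Case~3 with $\lambda_\mu^A=\lambda_\beta^B=0$ (the naive $(1-\theta)^2$ rate); Cases~1 and~4 where only the cocoercivity of $B$ (resp.\ only the strong monotonicity of $A$) is active; and Cases~2 and~5 where both constraints bind. In the first four cases the values of $\lambda_\mu^A$, $\lambda_\beta^B$, and $\rho^2$ can be read off from a rank-one (or rank-two) dual matrix and verified by Sylvester's criterion. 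The boundaries between cases arise exactly when a dual multiplier transitions through zero, giving the piecewise domain inequalities in the theorem.

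For the matching lower bound I would recover explicit worst-case operators $A\in\M_\mu$, $B\in\C_\beta$ by Cholesky-factoring the optimal Gram matrix (Lemma~\ref{lem:cholesky}) and applying the two-point interpolation results. In Cases~1--4 the Gram matrix is of low rank and the resulting operators are affine on a one- or two-dimensional subspace (scalar multiples of identity plus an anchor), while in Case~5 a genuinely three-dimensional construction is required, which is precisely why the theorem needs $\dim\hilbert\geq 3$.

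The main obstacle is Case~5: both multipliers are strictly positive and $S$ has a two-dimensional kernel at the optimum, so $\rho_5$ arises as the solution of a rank-$2$ deficiency condition on a symbolic $3\times 3$ matrix with two free parameters. Producing the closed form for $\rho_5, \lambda_\mu^A, \lambda_\beta^B$ and then verifying PSD-ness of $S$ analytically (most naturally by a Schur complement against the nontrivial kernel direction) is algebraically heavy enough that symbolic computer algebra, as mentioned in the introduction, is essential. Once the analytic formulas are in hand, the proof reduces to checking a small list of polynomial inequalities in $(\mu,\beta,\theta)$ on each case's stated domain and confirming continuity of $\rho$ at case boundaries.
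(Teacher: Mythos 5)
Your overall strategy -- specialize OSPEP to the DRS case, get a $3\times 3$ primal--dual SDP pair with multipliers $\lambda^A_\mu$ and $\lambda^B_\beta$, then exhibit matching primal and dual feasible points and invoke weak duality -- is exactly the route the paper takes. The paper's proof consists precisely of, for each of the five regions, (i) giving explicit closed-form values for $\rho^2,\lambda^A_\mu,\lambda^B_\beta$ together with a sum-of-squares decomposition of $\mathrm{Tr}(SG)$ that certifies $S\succeq 0$ (Section~C.1.1), (ii) giving explicit operator pairs $A\in\M_\mu$, $B\in\C_\beta$ achieving the stated rate (Section~C.1.2), and (iii) a long algebraic verification (Section~D.1) that the quantities are nonnegative on the stated domains.

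Where your proposal goes wrong is in the specific structure you attribute to the five cases. You claim Case~3 is the ``naive'' rate with $\lambda^A_\mu=\lambda^B_\beta=0$ and Cases~1, 4 have one vanishing multiplier; in fact, in the paper's solutions \emph{all five cases have both multipliers strictly positive} on the interior of their domains (e.g.\ Case~(c) uses $\lambda^A_\mu=\lambda^B_\beta=2\theta(\theta-1)>0$ since $\theta>1$ there). With $\lambda^A_\mu=\lambda^B_\beta=0$ the slack matrix $S=\rho^2 M_I-M_O$ has the entry $-\theta^2<0$ on its diagonal, so it cannot be PSD for any $\theta\ne 0$. What actually distinguishes the cases is the \emph{form} of the certificate and the rank of $S$: cases (a)--(d) admit a rank-$2$ sum-of-squares decomposition with two nonzero terms, whereas case (e) is a rank-$1$ certificate ($K_2=0$); the domain boundaries come from where a nonnegativity condition on $K_1$, $K_2$, or a multiplier fails, not from a multiplier passing through zero. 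Your second error is the claim that Case~5 ``requires a genuinely three-dimensional construction, which is why $\dim\hilbert\ge 3$ is needed'': the paper's worst-case operators for Case~(e) are explicit $2\times 2$ rotation-type matrices, and every lower-bound construction is two-dimensional. The dimension hypothesis is tied to the exactness of the Gram-matrix reformulation (Lemma~\ref{lem:cholesky} applied to the $3\times 3$ Gram matrix of $(z,z_A,z_B)$), not to the geometry of the worst case.

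Finally, the proposal stops at the point where the real work begins: it does not produce the formulas for $\rho^2,\lambda^A_\mu,\lambda^B_\beta$, the SOS coefficients $m_i,K_j$, the worst-case $2\times 2$ matrices $A,B$, or the case-by-case domain inequalities -- all of which together constitute essentially all of Sections C.1 and D.1. Naming the method is not the same as discharging it, and here the difficulty is concentrated entirely in the algebra your outline defers.
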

(In the first, second, and fourth cases,
the former parts of the conditions ensure that there is no division by $0$ in the latter parts.
We show this in Section~\ref{ss:theorem3_verification} case (a) part (ii), case (b) part (ii), and case (d) part (ii).)

\begin{corollary}\label{cor:DRS_coco_strmonotone_no_relax}
Let $A\in\M_\mu$ and $B\in\C_\beta$ with $\mu,\beta>0$, and assume $\dim\hilbert \geq 3$. The \textbf{tight} contraction factor of the DRS operator $I- J_B+ J_A(2J_B-I)$ is
\begin{equation*}
	\rho=\left\{\begin{array}{ll}
	\lvert1- \tfrac{\beta}{\beta+1}\rvert	&  \text{ if } \beta^2+\mu\beta+\beta-\mu \leq 0,\\
	\lvert 1-\tfrac{1+\mu\beta}{(\mu+1)(\beta+1)}\rvert	&  \text{ if } \mu\beta-\mu-\beta\geq 1,\\
	\lvert 1-\tfrac{\mu }{\mu +1} \rvert 	& \text{ if } \mu^2+\mu\beta+\mu-\beta\leq 0,\\
	\tfrac{1}{2} \tfrac{\beta +\mu}{\sqrt{\beta  \mu  (\beta +\mu +1)}}	& \text{ otherwise.}
	\end{array}\right.
	\end{equation*}
\end{corollary}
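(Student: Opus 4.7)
The proof reduces to specializing Theorem~\ref{thm:DRS_coco_strmonotone} to $\theta=1$ and simplifying each of the five piecewise cases to match the four cases of the corollary. The plan is to first collapse the five expressions for $\rho$, then reconcile the case-conditions, and finally identify the ``otherwise'' case.

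For the expressions themselves, substituting $\theta=1$ gives $|1-\beta/(\beta+1)|=1/(\beta+1)$ in case~1, the displayed form $|1-(1+\mu\beta)/((\mu+1)(\beta+1))|$ in case~2, and $|1-\mu/(\mu+1)|=1/(\mu+1)$ in case~4. Case~3 gives $|1-\theta|=0$, but its condition $1\ge 2(\mu\beta+\mu+\beta)/(2\mu\beta+\mu+\beta)$ rearranges to $\mu+\beta\le 0$, which is impossible for $\mu,\beta>0$, so this case is vacuous. For $\rho_5$, using $(2-\theta)\mu(\beta+1)+\theta\beta(1-\mu)=\mu+\beta$ and the symmetric identity, together with $2\mu\beta(1-\theta)+(2-\theta)(\mu+\beta+1)=\mu+\beta+1$, one finds $\rho_5=\tfrac{1}{2}(\mu+\beta)/\sqrt{\mu\beta(\mu+\beta+1)}$, matching the corollary's last case.

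The substantive step is to simplify the conditions. For case~1 at $\theta=1$, the inequality $1\le 2(\beta+1)(\mu-\beta-\mu\beta)/(\mu+\mu\beta-\beta-\beta^2-2\mu\beta^2)$, after clearing the denominator (which is positive throughout the relevant region by the estimate $\mu(1-\beta)(1+2\beta)\ge\beta(1+\beta)(1+2\beta)>\beta(1+\beta)$), rearranges to $\mu(1-\beta)\ge\beta(1+\beta)$, i.e.\ $\beta^2+\mu\beta+\beta-\mu\le 0$. A direct check shows this already implies the first condition $\mu\beta-\mu+\beta<0$, so the combined case-1 region is exactly as stated in the corollary. Case~4 is obtained from case~1 by swapping $\mu\leftrightarrow\beta$. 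For case~2, one first verifies numerically and then algebraically that in the region $\mu\beta-\mu-\beta>0$ the denominator $\mu^2+\beta^2+\mu^2\beta+\mu\beta^2+\mu+\beta-2\mu^2\beta^2$ is negative, so clearing it flips the inequality. The simplification then collapses to $(\mu+\beta)(1+\mu+\beta-\mu\beta)\le 0$, which (since $\mu+\beta>0$) is equivalent to $\mu\beta-\mu-\beta\ge 1$. The ``otherwise'' case of the corollary is then just the set-theoretic complement of these three conditions, matching case~5 of the theorem.

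The main obstacle is the sign analysis of the denominators when clearing the fractions in the conditions: the direction of each inequality depends on whether the denominator is positive or negative on the region determined by the accompanying sign condition. Since each denominator is a low-degree polynomial in $\mu$ and $\beta$, this is a finite and essentially routine exercise, but it must be carried out carefully to ensure the resulting single-inequality descriptions in the corollary are correct and that they fully absorb the original first-conditions of the theorem (as in the implication $\beta^2+\mu\beta+\beta-\mu\le 0\Rightarrow \mu\beta-\mu+\beta<0$).
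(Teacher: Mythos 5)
Your proof is correct and carries out exactly the approach the paper takes, namely substituting $\theta=1$ into Theorem~\ref{thm:DRS_coco_strmonotone} and simplifying; the paper's own proof just says ``Plug $\theta = 1$ \dots and simplify. We omit the details,'' and your write-up supplies precisely that omitted algebra. You correctly identify case~(c) as vacuous at $\theta=1$, collapse $\rho_5$ to $\tfrac12(\mu+\beta)/\sqrt{\mu\beta(\mu+\beta+1)}$, and reduce each remaining condition to a single polynomial inequality, with the denominator-sign facts you need already established in Section~\ref{ss:theorem3_verification}, cases~(a)(ii) and (b)(ii) (note that for the forward implication in case~1, denominator positivity follows from $\mu\beta-\mu+\beta<0$ alone, as shown there, rather than from the corollary's target inequality that your estimate $\mu(1-\beta)(1+2\beta)\ge\beta(1+\beta)(1+2\beta)>\beta(1+\beta)$ presupposes).
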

\begin{proof}
Plug $\theta = 1$ into Theorem~\ref{thm:DRS_coco_strmonotone} and simplify. We omit the details.
\end{proof}

\begin{theorem}\label{thm:DRS_Lipschitz_strmonotone}
Let $A\in\M_\mu$ and $B\in\M\cap\Li_L$ with $\mu,L>0$, and assume $\dim\hilbert\geq 3$. The \textbf{tight} contraction factor of the DRS operator $I-\theta J_B+\theta J_A(2J_B-I)$ for $\theta\in(0,2)$ is
\begin{equation*}
	\rho=\left\{\begin{array}{ll}
	\tfrac{\theta +\sqrt{\tfrac{(2 (\theta -1) \mu +\theta -2)^2+L^2 (\theta -2 (\mu +1))^2}{L^2+1}}}{2 (\mu +1)}	&  \text{ if } (a),\\
	\lvert 1-\theta\tfrac{L+\mu}{(\mu+1)(L+1)} \rvert	&  \text{ if } (b),\\
	\sqrt{\tfrac{(2-\theta)}{4 \mu  \left(L^2+1\right)}\, \tfrac{
	\left(
	\theta(L^2+1)-2\mu(\theta+L^2-1)
	\right)
	\left(\theta  \left(1+2 \mu +L^2\right)-2 (\mu +1) \left(L^2+1\right)\right)}{2 \mu  \left(\theta +L^2-1\right)-(2-\theta ) \left(1-L^2\right)}}	& \text{ otherwise,}
	\end{array}\right.
	\end{equation*}
	with
	\begin{itemize}
	\item[(a)] $\mu\tfrac{-\left(2 (\theta -1) \mu +\theta-2\right)+L^2\left(\theta -2(1+ \mu)\right)}{\sqrt{ (2 (\theta -1) \mu +\theta -2)^2+L^2 (\theta -2 (\mu +1))^2}}\leq \sqrt{L^2+1}$,
	\item[(b)] $L<1$, $\mu >\tfrac{L^2+1}{(L-1)^2}$, and $\theta \leq \tfrac{2 (\mu +1) (L+1)(\mu +\mu  L^2-L^2-2 \mu  L-1)}{2 \mu ^2-\mu +\mu  L^3-L^3-3 \mu  L^2-L^2-2 \mu ^2 L-\mu  L-L-1}$.
\end{itemize}
\end{theorem}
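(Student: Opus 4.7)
My plan is to apply the OSPEP machinery of Section~\ref{sec:PEP} specialized to DRS, i.e., take $C=0$ in the DYS framework, so that the primal SDP lives on a $3\times 3$ Grammian in $(z,z_A,z_B)$. For the classes $A\in\M_\mu$ and $B\in\M\cap\Li_L$ I obtain three constraint matrices $M_\mu^A$ (from $\mu$-strong monotonicity of $A$), $M_0^B$ (from monotonicity of $B$), and $M_L^B$ (from $L$-Lipschitzness of $B$), plus the normalization $\tr(M_I G)=1$ and the objective $\tr(M_O G)$. By Theorem~\ref{thm:exactness} (applied with $\Q_2=\M\cap\Li_L$ and the omitted $C$ block), solving this SDP gives the tight contraction factor, and strong duality holds by Theorem~\ref{thm:duality} provided the intersection $\M\cap\Li_L$ is non-degenerate, which holds for $L>0$.

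The first step is the numerical phase: for a dense grid of $(\mu,L,\theta)$ with $\theta\in(0,2)$, solve the SDP \eqref{eq:ospep_main6} (specialized to DRS) and inspect which of the three dual multipliers $\lambda_\mu^A$, $\lambda_0^B$, $\lambda_L^B$ are active, together with the rank and null-space structure of the optimal dual slack $S$. This exploration should partition parameter space into exactly the three regimes of the statement: regime (a) with all three multipliers positive; regime (b) with only $\lambda_\mu^A$ and $\lambda_L^B$ positive (matching the form $|1-\theta(L+\mu)/((\mu+1)(L+1))|$, which is exactly what one gets by enforcing the worst-case contraction along a one-dimensional affine operator with gain $L$); and the ``otherwise'' regime which is the generic interior case. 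From the rank and the required kernel vectors I will guess the analytic multipliers $\lambda_\mu^A(\mu,L,\theta)$, $\lambda_0^B(\mu,L,\theta)$, $\lambda_L^B(\mu,L,\theta)$ and the contraction factor $\rho(\mu,L,\theta)$ in closed form.

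With the candidate dual solution in hand, the upper-bound half of the proof becomes algebraic verification: substitute the guessed multipliers into the dual slack matrix
\[
S=-M_O-\lambda_\mu^A M_\mu^A-\lambda_0^B M_0^B-\lambda_L^B M_L^B+\rho^2 M_I
\]
and check $S\succeq 0$. For rank-$1$ or rank-$2$ optima this reduces to showing that the appropriate $1\times 1$ or $2\times 2$ principal minors are nonnegative and the determinant vanishes, which I will carry out by direct computation, delegating the heavy symbolic manipulation to the companion code but presenting the factored forms in the paper. The boundary inequalities (a) and (b) are exactly the conditions under which the corresponding multipliers remain nonnegative and $S$ stays positive semidefinite, so they should fall out of the same calculation; in particular the inequality in (a) expresses nonnegativity of $\lambda_0^B$ at the crossover, and the inequalities in (b) delimit the range where the two-active-constraint regime dominates the three-active-constraint one.

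For the matching lower bound in each regime I will construct an explicit $(A,B)$ pair and points $z,z'$ in $\reals^3$ achieving the stated $\rho$. Following Section~\ref{ss:conv-ospep-transformation}, a Cholesky factor of the optimal primal Grammian $G^\star$ yields concrete vectors $z,z_A,z_B$, and then the two-point interpolation results (Proposition~\ref{prop:two_point_ML2} for $B\in\M\cap\Li_L$ and the trivial affine construction for $A\in\M_\mu$ from Proposition~\ref{prop:msm_ext}) let me extend these two-point operator data to bona-fide global operators, in fact to affine operators of the form $A(x)=\mu(x-x_1)+q_1$ and $B(x)=Mx+b$ with $M$ a suitable $2\times 2$ matrix satisfying the spectral bounds $\lambda_{\min}\!\big(\tfrac12(M+M^T)\big)\ge 0$ and $\|M\|\le L$, exactly as in the proof of Proposition~\ref{prop:two_point_MCL}; this is where the $\dim\hilbert\ge 3$ hypothesis is used. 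The main obstacle will be the purely algebraic certification that $S\succeq 0$ on the complement of the active face in the generic ``otherwise'' regime, where the formula for $\rho$ has a square-root structure and the multipliers are rational in $(\mu,L,\theta)$ with nontrivial denominators; managing these expressions without sign errors is the reason the verification is outsourced to the symbolic code accompanying the paper, while the factorizations needed on paper will be chosen so that each sign is manifest.
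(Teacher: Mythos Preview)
Your plan is essentially the paper's approach: specialize the OSPEP SDP to DRS (a $3\times3$ Grammian), solve numerically to guess regimes and multipliers, verify the upper bound by exhibiting $(\rho^2,\lambda^A_\mu,\lambda^B_\mu,\lambda^B_L)$ making $S\succeq 0$, and match it with an explicit worst-case pair $(A,B)$. Two details in your write-up deserve correction, though neither breaks the strategy.

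First, your regime bookkeeping is slightly off. In the paper's solution, \emph{both} case~(b) and the ``otherwise'' case~(c) have $\lambda^B_\mu=0$; only case~(a) uses all three multipliers (and indeed the defining inequality of~(a) is exactly $\lambda^B_\mu\ge0$, as you guessed). What separates~(b) from~(c) is the rank of $S$: rank two in~(b) (the sum-of-squares certificate needs two terms), rank one in~(a) and~(c). Since you plan to inspect rank and null space this will surface, but your stated expectation that the active set alone partitions the regimes would not.

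Second, for the lower bounds the paper does \emph{not} go through Cholesky plus two-point interpolation; it writes down explicit $2\times2$ operators---for~(a) and~(c), $A=\mu I+N_{\text{line}}$ paired with $B$ a scaled rotation, for~(b) simply $A=\mu I$, $B=LI$---and reads $\rho$ off as the top singular value of the resulting linear $T$. Your route is valid in principle, but note that the affine form $A(x)=\mu(x-x_1)+q_1$ you wrote only interpolates $\M_\mu$ data when strong monotonicity is tight; in general you need $A(x)=M(x-x_1)+q_1$ with $\tfrac12(M+M^T)\succeq\mu I$, as in Proposition~\ref{prop:two_point_MCL}. Also, $\dim\hilbert\ge3$ is used for the exactness of the Grammian reformulation (the $3\times3$ analogue of Lemma~\ref{lem:cholesky}), not for the lower-bound construction, which in the paper already lives in $\reals^2$.
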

(In case (b), the former part of the condition ensures that there is no division by $0$ in the latter part.
We show this in Section~\ref{ss:lip-mon-upper} case (b) part (ii).)

\begin{corollary}\label{cor:DRS_Lipschitz_strmonotone_no_relax}
Let $A\in\M_\mu$ and $B\in\M\cap\Li_L$ with $\mu,L>0$, and assume $\dim\hilbert\geq 3$. The \textbf{tight} contraction factor of the DRS operator $I- J_B+ J_A(2J_B-I)$ is
\begin{equation*}
	\rho=\left\{\begin{array}{ll}
	\tfrac{1+\sqrt{\tfrac{(1-2 (\mu +1))^2 L^2+1}{L^2+1}}}{2(1+\mu)}	&  \text{ if } (\mu -1)(2 \mu +1)^2 L^2\geq 2 \mu ^2-2 \sqrt{2} \sqrt{\mu +1} \mu +\mu +1 \text{ or } \mu\leq 1,\\
	\tfrac{1+\mu  L}{(1+\mu) (1+L)}	&  \text{ if } L\leq \tfrac{2 \mu ^2 (L-1) L^2+\mu  (1-2 L)-1}{(\mu +1) \left(L^2+L+1\right)} \text{ and } L<1,\\
	\sqrt{\tfrac{(2\mu L^2+L^2+1)(2\mu L^2-L^2-1)}{4 \mu  \left(L^2+1\right) \left(2 \mu L^2+L^2 -1\right)}}	& \text{ otherwise.}
	\end{array}\right.
\end{equation*}
\end{corollary}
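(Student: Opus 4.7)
The plan is to argue as in the proof of Corollary~\ref{cor:DRS_coco_strmonotone_no_relax}: substitute $\theta=1$ into Theorem~\ref{thm:DRS_Lipschitz_strmonotone} and simplify each of the three cases, matching the resulting formula and triggering condition with the corresponding case of the corollary.

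The ``otherwise'' case is the easiest. Substituting $\theta=1$ into
\[
\sqrt{\tfrac{(2-\theta)}{4\mu(L^2+1)}\cdot\tfrac{(\theta(L^2+1)-2\mu(\theta+L^2-1))(\theta(1+2\mu+L^2)-2(\mu+1)(L^2+1))}{2\mu(\theta+L^2-1)-(2-\theta)(1-L^2)}}
\]
and cancelling common factors produces exactly the third formula of the corollary. Case (b) is similarly mechanical: at $\theta=1$, $(\mu+1)(L+1)-(L+\mu)=\mu L+1$, so $\lvert 1-(L+\mu)/((\mu+1)(L+1))\rvert = (1+\mu L)/((1+\mu)(1+L))$; the side conditions $L<1$ and $\mu>(L^2+1)/(L-1)^2$ carry over, and the $\theta$-threshold inequality collapses, via clearing denominators and regrouping powers of $L$, to the corollary's second-case condition.

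Case (a) is the delicate one. Using $2(\theta-1)\mu+\theta-2=-1$ and $\theta-2(\mu+1)=-(2\mu+1)$ at $\theta=1$, the case-(a) formula becomes the first expression of the corollary. The triggering inequality reads
\[
\mu\bigl(1-(1+2\mu)L^2\bigr)\le\sqrt{(L^2+1)\bigl(1+(1+2\mu)^2L^2\bigr)}.
\]
If $\mu\le 1$ or $(1+2\mu)L^2\ge 1$, the left-hand side is nonpositive and the inequality holds trivially. Otherwise ($\mu>1$ with positive left-hand side), squaring and cancelling a factor of $(\mu+1)$ yields
\[
(\mu-1)(1+2\mu)^2 u^2-2(2\mu^2+\mu+1)u+(\mu-1)\le 0,\qquad u=L^2.
\]
The discriminant $(2\mu^2+\mu+1)^2-(\mu-1)^2(1+2\mu)^2$ factors as a difference of squares into $2(\mu+1)\cdot 4\mu^2=8\mu^2(\mu+1)$, giving the smaller root $u_-=(2\mu^2+\mu+1-2\mu\sqrt{2(\mu+1)})/((\mu-1)(2\mu+1)^2)$. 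A short sign check shows the larger root exceeds $1/(1+2\mu)$, so it is already absorbed by the trivial-sign regime; the condition thus collapses to $L^2\ge u_-$, which is precisely the corollary's first-case condition.

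The main obstacle will be the case-(a) analysis --- factoring the discriminant, selecting the correct root, and verifying that the upper root lies in the regime already handled by the trivial-sign argument. The case-(b) threshold simplification at $\theta=1$ is the next most laborious step, and symbolic verification (as offered by the authors' accompanying code) is a practical way to confirm the polynomial identities.
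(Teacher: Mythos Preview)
Your approach is exactly the paper's: substitute $\theta=1$ into Theorem~\ref{thm:DRS_Lipschitz_strmonotone} and simplify (the paper omits all details). Your case-(c) and case-(b) sketches are fine, though for (b) you should note that the $\mu>(L^2+1)/(L-1)^2$ hypothesis does \emph{not} appear explicitly in the corollary's second case---it is absorbed into the single displayed inequality, and this absorption needs a line of justification.

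There is, however, a genuine slip in your case-(a) analysis. You assert that ``if $\mu\le 1$ \ldots\ the left-hand side is nonpositive,'' but this is false: for instance $\mu=\tfrac12$, $L=\tfrac1{10}$ gives $\mu(1-(1+2\mu)L^2)>0$. The correct argument for $\mu\le 1$ is that even when the left-hand side is positive, squaring is legitimate and the resulting quadratic in $u=L^2$,
\[
(\mu-1)(1+2\mu)^2 u^2-2(2\mu^2+\mu+1)u+(\mu-1),
\]
has nonpositive leading coefficient and nonpositive constant term, hence (using your discriminant factorisation $8\mu^2(\mu+1)$) both roots are $\le 0$ and the quadratic is $\le 0$ for all $u>0$. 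So the conclusion---that condition (a) holds automatically whenever $\mu\le 1$---is correct, but the stated reason is not. Once you patch this, the rest of your root analysis for $\mu>1$ (checking that $1/(1+2\mu)$ lies strictly between $u_-$ and $u_+$, which follows from evaluating the quadratic there to get $-4(\mu+1)/(2\mu+1)<0$) correctly collapses the condition to $L^2\ge u_-$, matching the corollary.
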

\begin{proof}
Plug $\theta = 1$ into Theorem~\ref{thm:DRS_Lipschitz_strmonotone} and simplify. We omit the details.
\end{proof}



\subsection{Proof outline}
The discovery of these proofs relied heavily on a computer algebra system (CAS), Mathematica.
When symbolically solving the primal problem, we conjectured that the worst-case operators would exist in $\reals^2$.
This is equivalent to conjecturing that the solution $G^\star\in\reals^{3\times 3}$ has rank $2$ or less, which is reasonable due to complementary slackness.
We then formulated the problem of finding this $2$-dimensional worst-case as a non-convex quadratic program, rather than an SDP, formulated the KKT system, and solved the stationary points using the CAS.
When symbolically solving the dual problem, we conjectured that the optimal solution would correspond to $S^\star\in \reals^{3\times 3}$ with rank $1$ or $2$, which is reasonable due to complementary slackness.
We then chose $\rho^2$ and the other dual variables so that $S^\star$ would have rank $1$ or $2$.
Finally, we minimized the contraction factor $\rho^2$ under those rank conditions to obtain the optimum.
These two approaches gave us analytic expressions for optimal primal and dual SDP solutions.
To verify the solutions, we formulated them into primal and dual feasible points and verified that their optimal values are equal for all parameter choices.

The written proof of Theorems~\ref{thm:DRS_coco_strmonotone} and \ref{thm:DRS_Lipschitz_strmonotone}, are deferred to supplementary materials, to Sections~\ref{s:main-proof} and \ref{ss:verification}.
The point we wish to make in this section is that the OSPEP is a powerful tool that enables us to prove incredibly complex results. The length and complexity of the proofs demonstrate this point.

The proofs provided on paper are complete and rigorous.
However, we help readers verify the calculations of Sections~\ref{s:main-proof} and \ref{ss:verification} with code that performs symbolic manipulations.
If a reader is willing to trust the CAS's symbolic manipulations, the proofs are not difficult to follow.
We also verified the results through the following alternative approach:
we finely discretized the parameter space and verified that the upper and lower bounds of Section~\ref{s:main-proof} are valid and that they match up to machine precision.
The link to the code is provided in the conclusion.

\subsection{Further remarks}

The third contraction factor of Theorem~\ref{thm:DRS_coco_strmonotone}, the factor $|1-\theta|$, matches the contraction factor of Theorem 5.6 of \cite{giselsson2017tight}. 
The contraction factor for the other 4 cases do not match.
This implies, Theorem 5.6 of \cite{giselsson2017tight} is tight when $\theta \geq 2 \tfrac{\mu\beta+\mu+\beta}{2\mu\beta+\mu+\beta}$ but not in the other cases.

The first contraction factor of Corollary~\ref{cor:DRS_Lipschitz_strmonotone_no_relax}, but not the second and third, matches the contraction factor of Theorem 5.2 of \cite{moursi2018douglas}
which instead assumes $B$ is a skew symmetric $L$-Lipschitz linear operator, a stronger assumption than $B\in \mathcal{M}\cap \mathcal{L}$.

One can show that the contraction factors of Theorems~\ref{thm:DRS_coco_strmonotone} and \ref{thm:DRS_Lipschitz_strmonotone} are symmetric in the assumptions.
Specifically, if we swap the assumptions and instead assume [$B\in\M_\mu$ and $A\in\C_\beta$] and [$B\in\M_\mu$ and $A\in\M\cap\Li_L$], the contraction factors of Theorems~\ref{thm:DRS_coco_strmonotone} and \ref{thm:DRS_Lipschitz_strmonotone} remain valid and tight.
The proof follows from using the ``scaled relative graph'' developed in the concurrent work by Ryu, Hannah, and Yin \cite[Theorem~7]{ryu2019scaled}.

The optimal $\alpha$ and $\theta$ minimizing the contraction factor of Theorems~\ref{thm:DRS_coco_strmonotone} and \ref{thm:DRS_Lipschitz_strmonotone} can be computed with the algorithm presented in  Section~\ref{sec:parameter_selection}.
However, their analytical expressions seem to be quite complicated.

If we furthermore assume $A$ and $B$ are subdifferential operators of closed convex proper functions, the contraction factors of Theorems~\ref{thm:DRS_coco_strmonotone} and \ref{thm:DRS_Lipschitz_strmonotone} remain valid but our proof no longer guarantees tightness; with the additional assumptions, it may be possible to obtain a smaller contraction factor.
Such setups can be analyzed with the machinery and interpolation results of \cite{taylor2017smooth}.
By numerically solving the SDP with the added subdifferential operator assumption, we find that Theorem~\ref{thm:DRS_coco_strmonotone} remains tight.
For subdifferential operators of convex functions, Lipschitz continuity implies cocoercivity by the Baillon--Haddad theorem, so there is no reason to consider Theorem~\ref{thm:DRS_Lipschitz_strmonotone}.
Indeed, numerical solutions of the SDP indicate Theorem~\ref{thm:DRS_Lipschitz_strmonotone} is not tight in this setup.

\begin{table}[!h]
	\begin{center}
		{\renewcommand{\arraystretch}{1.3}
			\begin{tabular}{@{}ccccc@{}}
				\specialrule{2pt}{1pt}{1pt}
				\bfseries \bfseries Properties for $A$ &  \bfseries Properties for $B$ &  \bfseries Reference &  \bfseries Tight  \\
				\hline
				  $\partial f$, $f$: str. cvx \& smooth             &  $\partial g$   & \cite{giselsson2014diagonal,giselsson2017linear} &    Y     \\  
				  $\partial f$, $f$: str. cvx        &  $\partial g$, $g$: smooth   & \cite{giselsson2017tight}      &   N                     \\  
				  str. mono. \& cocoercive                           &  -                      &    \cite{giselsson2017tight}    &   Y                      \\  
				  str. mono. \& Lipschitz                             &  -                                       &    \cite{giselsson2017tight}        &   Y \\  
				  str. mono.                        &  cocoercive                    &    \cite{giselsson2017tight}       &   N                   \\  
				  str. mono.                        &  Lipschitz                     &  \cite{moursi2018douglas}      &   N                       \\ 
				\specialrule{2pt}{1pt}{1pt}
			\end{tabular}
		}
\caption{Prior results on contraction factors of Douglas--Rachford splitting.}
	\label{tab:DRS_settings_summary}
\end{center}
\end{table}

Table~\ref{tab:DRS_settings_summary} lists other commonly considered assumptions providing linear convergence of DRS and the corresponding prior work analyzing them.
The results of Theorems~\ref{thm:DRS_coco_strmonotone} and \ref{thm:DRS_Lipschitz_strmonotone} provide the tight contraction factors for the three cases for which there had not been tight results.

\section{Automatic optimal parameter selection}
\label{sec:parameter_selection}
When using FBS, DRS, or DYS, how should one choose the parameters $\alpha > 0$ and $\theta \in (0,2)$?
One option is to find a contraction factor
and choose the $\alpha$ and $\theta$ that minimizes it.
However, this may be suboptimal if the contraction factor is not tight or if no known contraction factors fully utilize a given set of assumptions

In this section, we use the OSPEP to automatically select the optimal algorithm parameters for FBS, DRS, and DYS.
Write
\begin{align*}
\rho^2_{\star}(\alpha,\theta)=\left(
\begin{tabular}{ll}
$\displaystyle \maximize$ & $\displaystyle\frac{\|T(z;A,B,C,\alpha,\theta)- T(z';A,B,C,\alpha,\theta)\|^2}{\|z-z'\|^2}$\\
 subject to & $A\in\Q_1$, $B\in\Q_2$, $C\in\Q_3$\\
& $z,z'\in\hilbert$, $z\neq z'$
\end{tabular}\right)
\end{align*}
where $z,$ $z'$, $A$, $B$, and $C$ are the optimization variables.
This is the tight contraction factor of \eqref{eq:ospep_main}, and we make explicit its dependence on $\alpha$ and $\theta$.
Define
\begin{align*}
    \rho_\star^2=
    \inf_{\alpha>0,\,\theta\in(0,2)}\rho_\star^2(\alpha,\theta)
\end{align*}
and write $\alpha_\star$ and $\theta_\star$ for the optimal parameters that attain the infimum, if they exist.

\begin{figure}
    \centering
    \includegraphics[width=\textwidth]{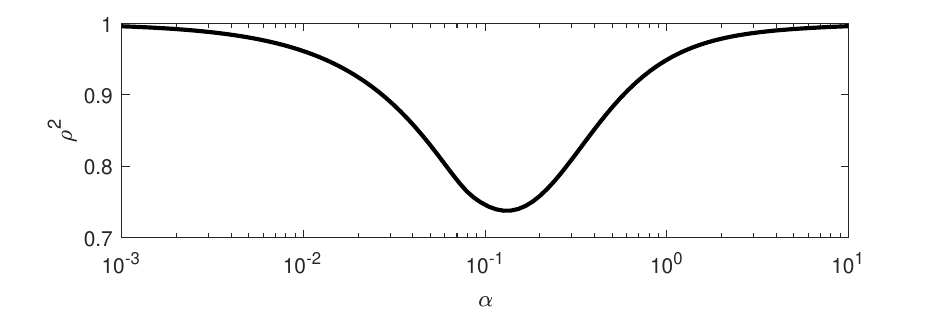}
    \caption{Plot of $\rho_\star^2(\alpha)$ under the assumptions
     $A\in\M_\mu$, $B\in\C_{\beta}\cap \Li_L$, and $C\in \mathcal{C}_{\beta_C}$ with $\mu=1$, $\beta=0.01$, $L=5$, and $\beta_C=9$. The optimal parameters are $\alpha_\star\approx0.131$ and $\theta_\star\approx1.644$, and they produce the optimal contraction factor $\rho^2_\star\approx0.737$. We used Matlab's \texttt{fminunc} for the minimization.
    }
    \label{fig:rho_alpha}
\end{figure}

Again, for simplicity of exposition, we limit the generality and consider the operator classes $\Q_1=\M_\mu$, $\Q_2=\C_{\beta}\cap \Li_L$, and $\Q_3=\mathcal{C}_{\beta_C}$, as in Section~\ref{ss:interp-sdp}.
For $\beta\in (0,\infty)$ and $L\in (0,\infty)$, the intersection $\C_{\beta}\cap \Li_L$ is non-degenerate.
So strong duality holds by Theorem~\ref{thm:duality}, and we use the dual OSPEP \eqref{eq:dual-ospep} to write
\begin{align*}
\rho^2_{\star}(\alpha,\theta)=\left(
\begin{tabular}{ll}
$\displaystyle \minimize$ & $ \rho^2$\\
 subject to &$\lambda^A_\mu,\lambda^{B}_\beta,\lambda^{B}_L,\lambda^C_\beta\ge 0$\\
 &$S(\rho^2,\lambda^A_\mu,\lambda^{B}_\beta,\lambda^{B}_L,\lambda^C_\beta,\theta,\alpha)\succeq 0$
\end{tabular}\right),
\end{align*}
where $\rho^2$, $\lambda^A_\mu$, $\lambda^{B}_\beta$, $\lambda^{B}_L$, and $\lambda^C_\beta$ are the optimization variables and $S$ is as in \eqref{eq:Smat-S3}.
Note that 
\begin{align*}
&S(\rho^2,\lambda^A_\mu,\lambda^{B}_\beta,\lambda^{B}_L,\lambda^C_\beta,\theta,\alpha)\\
&\quad=
\begin{pmatrix}
\rho^2+\frac{\lambda^{B}_\beta \beta}{\alpha}+\lambda^{B}_L-1
&\frac{\lambda^A_\mu}{2}
&-\lambda^{B}_\beta(\frac{1}{2}+\frac{\beta}{\alpha})-\lambda^B_L&0\\
\frac{\lambda^A_\mu}{2}&
\lambda^A_\mu(1+\alpha \mu)&-\lambda^A_\mu&\frac{\lambda^A_\mu}{2}\\
-\lambda^{B}_\beta(\frac{1}{2}+\frac{\beta}{\alpha})-\lambda^{B}_L&
-\lambda^A_\mu
&
\lambda^{B}_\beta(\frac{\beta}{\alpha}-1)+\lambda^{B}_L(1-\alpha^2L^2)&-\frac{\lambda^C_\beta}{2}
\\
0&\frac{\lambda^A_\mu}{2}&-\frac{\lambda^C_\beta}{2}&\frac{\lambda^C_\beta \beta_C}{\alpha}
\end{pmatrix}-\begin{pmatrix} 1\\\theta\\-\theta\\0\end{pmatrix}\begin{pmatrix} 1\\\theta\\-\theta\\0\end{pmatrix}^T
\end{align*}
is the Schur complement of
\begin{align*}
&\tilde{S}(\rho^2,\lambda^A_\mu,\lambda^{B}_\beta,\lambda^{B}_L,\lambda^C_\beta,\theta,\alpha)\\
&\quad=\begin{pmatrix}
\rho^2+\frac{\lambda^{B}_\beta\beta}{\alpha}+\lambda^{B}_L-1
&\frac{\lambda^A_\mu}{2}
&-\lambda^{B}_\beta(\frac{1}{2}+\frac{\beta}{\alpha})-\lambda^{B}_L&0&1\\
\frac{\lambda^A_\mu}{2}&
\lambda^A_\mu(1+\alpha \mu)&-\lambda^A_\mu&\frac{\lambda^A_\mu}{2}&\theta\\
-\lambda^{B}_\beta(\frac{1}{2}+\frac{\beta}{\alpha})-\lambda^{B}_L&
-\lambda^A_\mu
&
\lambda^{B}_\beta(\frac{\beta}{\alpha}-1)+\lambda^{B}_L(1-\alpha^2L^2)&-\frac{\lambda^C_\beta}{2}&-\theta
\\
0&\frac{\lambda^A_\mu}{2}&-\frac{\lambda^C_\beta}{2}&\frac{\lambda^C_\beta\beta_C}{\alpha}&0\\
1&\theta&-\theta&0&1
\end{pmatrix}\in \reals^{5\times 5}.
\end{align*}
Therefore $S\succeq 0$ if and only if $\tilde{S}\succeq 0$.
We use $\tilde{S}$ as it depends on $\theta$ linearly.
Define
$\rho^2_{\star}(\alpha)=\inf_{\theta\in(0,2)}\rho^2_\star(\alpha,\theta)$.
We evaluate $\rho^2_{\star}(\alpha)$ by solving the SDP
\begin{align*}
\rho^2_{\star}(\alpha)=\left(
\begin{tabular}{ll}
$\displaystyle \minimize$ & $ \rho^2$\\
 subject to &$\lambda^A_\mu,\lambda^{B}_\beta,\lambda^{B}_L,\lambda^C_\beta\ge 0$\\
 &$\tilde{S}(\rho^2,\lambda^A_\mu,\lambda^{B}_\beta,\lambda^{B}_L,\lambda^C_\beta,\theta,\alpha)\succeq 0$
\end{tabular}\right),
\end{align*}
where $\rho^2$, $\lambda^A_\mu$, $\lambda^{B}_\beta$, $\lambda^{B}_L$, $\lambda^C_\beta$, and $\theta$ are the optimization variables.

It remains to solve
\[
\rho_\star^2=\inf_{\alpha>0}\rho_\star^2(\alpha).
\]
The function $\rho^2_\star(\alpha)$ is non-convex in $\alpha$, and it does not seem possible to compute $\rho_\star^2$ with a single SDP.
However, $\rho^2(\alpha)$ seems to be continuous and unimodal for a wide range of operator classes and parameter choices.
Continuity is not surprising.
We do not know whether or why $\rho^2_\star(\alpha)$ is always unimodal.


To minimize the apparently continuous univariate unimodal function, we use Matlab's derivative free optimization (DFO) solver \verb|fminunc|.
We provide a routine that evaluates $\rho^2_\star(\alpha)$ by solving an SDP, and the DFO solver calls it to evaluate $\rho^2_\star(\alpha)$ at various values of $\alpha$.
Figure~\ref{fig:rho_alpha} shows an example of the function $\rho^2_\star(\alpha)$, and its minimizer was approximated with this approach.
In Figure~\ref{fig:plot_array}, we plot $\rho_\star^2(\alpha)$ under several assumptions. In all cases, $\rho_\star^2(\alpha)$ is continuous and unimodal.


\begin{figure}
\centering
\begin{subfigure}[b]{0.32\textwidth}
    \includegraphics[width=\textwidth]{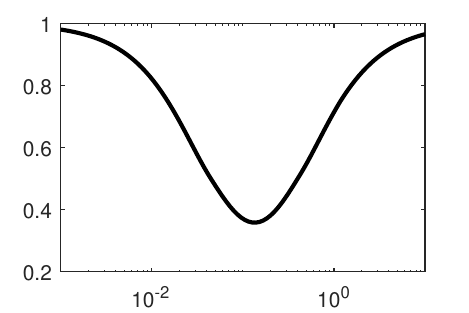}
    \caption{\footnotesize$\mu_A=1,\beta_A=0.07,$\\$\mu_B=4,\beta_B=0.02,\beta_C=9,$\\$\alpha_\star\approx0.13,\theta_\star\approx1.57,\rho^2_\star\approx0.36$}
\end{subfigure}
\begin{subfigure}[b]{0.32\textwidth}
    \includegraphics[width=\textwidth]{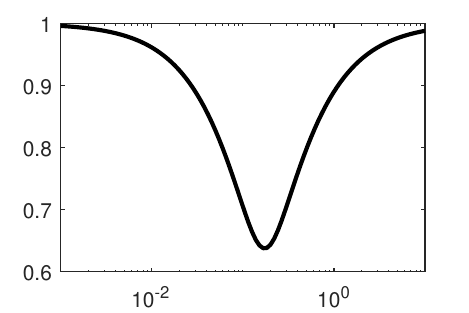}
    \caption{\footnotesize$\mu_A=1,\beta_B=0.03,C=0,$\\$\alpha_\star\approx0.17,\theta_\star\approx1.65,\rho^2_\star\approx0.64$\newline}
\end{subfigure}
\begin{subfigure}[b]{0.32\textwidth}
    \includegraphics[width=\textwidth]{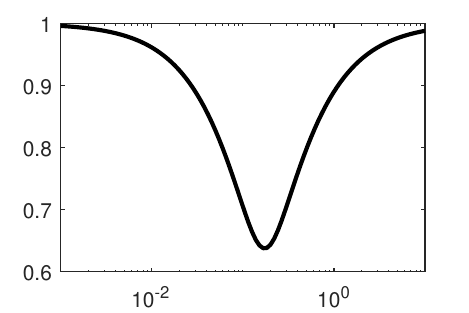}
    \caption{\footnotesize$\beta_A=0.03,\mu_B=1,C=0,$\\$\alpha_\star\approx0.17,\theta_\star\approx1.65,\rho^2_\star\approx0.64$\newline}
\end{subfigure}
\begin{subfigure}[b]{0.32\textwidth}
    \includegraphics[width=\textwidth]{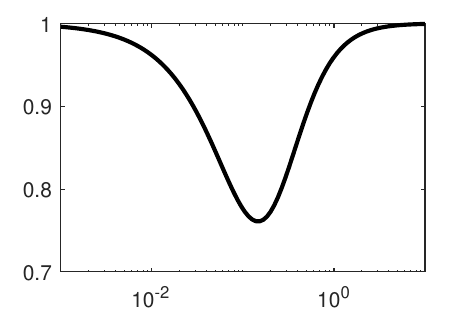}
    \caption{\footnotesize$\mu_A=1,L_B=4,C=0,$\\$\alpha_\star\approx0.15,\theta_\star\approx1.59,\rho^2_\star\approx0.76$\newline}
\end{subfigure}
\begin{subfigure}[b]{0.32\textwidth}
    \includegraphics[width=\textwidth]{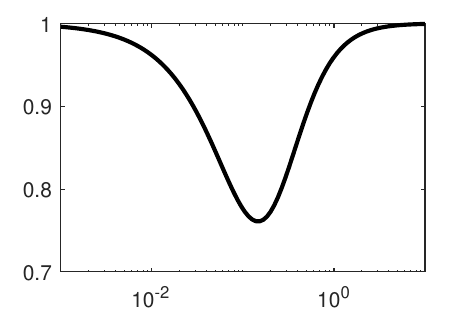}
    \caption{\footnotesize$L_A=4,\mu_B=1,C=0,$\\$\alpha_\star\approx0.15,\theta_\star\approx1.59,\rho^2_\star\approx0.76$\newline}
\end{subfigure}
\begin{subfigure}[b]{0.32\textwidth}
    \includegraphics[width=\textwidth]{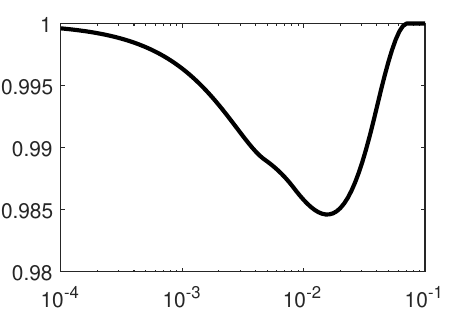}
    \caption{\footnotesize$A=0,\mu_B=1,\beta_B=0.1$\\$L_C=8,\alpha_\star\approx0.016,\theta_\star\approx1$\\$\rho^2_\star\approx0.98$}
\end{subfigure}
\begin{subfigure}[b]{0.32\textwidth}
    \includegraphics[width=\textwidth]{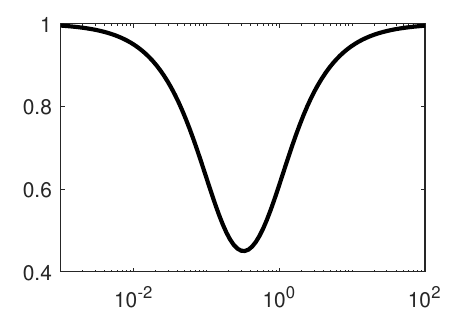}
    \caption{\footnotesize$\mu_A=1,\beta_A=0.07,L_A=7,$\\
    $\mu_B=0.03,\beta_B=0.02,L_A=2,$\\
    $\mu_C=0.01,\beta_C=9,L_C=0.05,$\\$\alpha_\star\approx0.32,\theta_\star\approx1.98,\rho^2_\star\approx0.45$}
\end{subfigure}
\begin{subfigure}[b]{0.32\textwidth}
    \includegraphics[width=\textwidth]{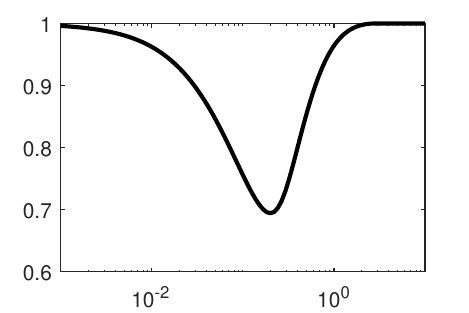}
    \caption{\footnotesize$A=0,\mu_B=1,\beta_C=0.1,$\\$\alpha_\star\approx0.2,\theta_\star\approx1,\rho^2_\star\approx0.69$\newline\newline}
\end{subfigure}\
\begin{subfigure}[b]{0.32\textwidth}
    \includegraphics[width=\textwidth]{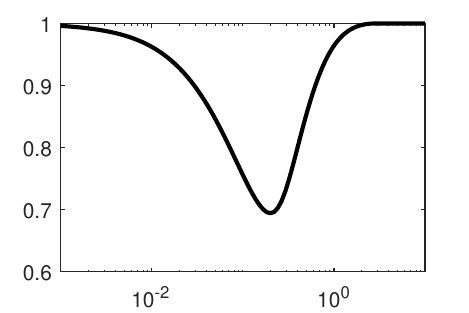}
    \caption{\footnotesize$\mu_A=1,B=0,\beta_C=0.1,$\\$\alpha_\star\approx0.2,\theta_\star\approx1,\rho^2_\star\approx0.69$\newline\newline}
\end{subfigure}
\caption{Plots of $\rho_\star^2(\alpha)$ under various assumptions. The plots are unimodal in all cases.
All operator classes are subsets of $\mathcal{M}$, and only the parameters used in the intersection are specified.
For example, subfigure (e) uses the classes
$Q_1=\mathcal{M}\cap \mathcal{L}_{L_A}$,
$Q_2=\mathcal{M}_{\mu_B}$, and
$Q_3=\{0\}$.
}
\label{fig:plot_array}
\end{figure}

\section{Conclusion}\label{sec:conclusion}
In this work, we presented the OSPEP methodology, proved its tightness,
and demonstrated its value by presenting two applications of it.
The first application was to prove tight analytic contraction factors for DRS and the second was to provide a method for automatic optimal parameter selection.

\paragraph{Code} 
With this paper, we release the following code:
Matlab script implementing OSPEP for FBS, DRS, and DYS;
Matlab script used to plot the figures of Section~5; and
Mathematica script to help readers verify the algebra of Section~\ref{s:main-proof}.
The code uses YALMIP \cite{Lofberg2004} and Mosek \cite{mosek} 
and is available at\\
\url{https://github.com/AdrienTaylor/OperatorSplittingPerformanceEstimation}.

For splitting methods applied to convex functions, one can use the Matlab toolbox PESTO~\cite{taylor2017performance},
available at\\
 \url{https://github.com/AdrienTaylor/Performance-Estimation-Toolbox}.

\section*{Acknowledgements}
Collaborations between the authors started during the LCCC Focus Period on Large-Scale and Distributed Optimization organized by the Automatic Control Department of Lund University. The authors thank the organizers and the other participants. Among others, we thank Laurent Lessard for insightful discussions on the topics of DRS and computer-assisted proofs.
Ernest Ryu was supported in part by NSF grant DMS-1720237 and ONR grant N000141712162.
Adrien Taylor was supported by the European Research Council (ERC) under the European Union's Horizon 2020 research and innovation program (grant agreement 724063). Pontus Giselsson was supported by the Swedish Foundation for Strategic Research and the Swedish Research Council.

\bibliographystyle{siamplain}
\bibliography{references}

\mbox{ }
\vspace{0.05in}
\begin{center}
\Large
\textbf{Appendix}
\vspace{0.25in}
\end{center}
\setcounter{section}{0}
\renewcommand{\thesection}{SM\arabic{section}}


\section{Full primal OSPEP}
\label{appendix:full_primal_ospep}
We state the full primal OSPEP with the operator classes
$\Q_1=\mathcal{M}_{\mu_A}\cap \mathcal{C}_{\beta_A}\cap \mathcal{L}_{L_A}$,
$\Q_2=\mathcal{M}_{\mu_B}\cap \mathcal{C}_{\beta_B}\cap \mathcal{L}_{L_B}$,
and 
$\Q_3=\mathcal{M}_{\mu_C}\cap \mathcal{C}_{\beta_C}\cap \mathcal{L}_{L_C}$.
The primal OSPEP with fewer assumptions will be of an analogous form with fewer constraints.

\begin{align*}
\begin{tabular}{ll}
 $\displaystyle \maximize_{}$ & $ \tr(M_{O}G)$\\
 subject to 
    &$ \tr(M_{\mu}^AG)\ge 0$,\,\, $ \tr(M_{\beta}^AG)\ge 0$,\,\, $\tr(M_{L}^AG)\ge 0$\\
    &$ \tr(M_{\mu}^BG)\ge 0$\,\, ,$ \tr(M_{\beta}^BG)\ge 0$,\,\, $\tr(M_{L}^BG)\ge 0$\\
    &$ \tr(M_{\mu}^CG)\ge 0$,\,\, $ \tr(M_{\beta}^CG)\ge 0$,\,\, $ \tr(M_{L}^CG)\ge 0$\\
    &$ \tr(M_{I}G)=1$\\
    &$G\succeq 0$
\end{tabular}
\end{align*}
where $G\in \mathbb{S}^4_+$ is the optimization variable and
\begin{gather*}
M_I=
\begin{pmatrix}
1&0&0&0\\
0&0&0&0\\
0&0&0&0\\
0&0&0&0
\end{pmatrix},
\qquad
M_O=
\begin{pmatrix}
1&\theta&-\theta&0\\
\theta&\theta^2&-\theta^2&0\\
-\theta&-\theta^2&\theta^2&0\\
0&0&0&0
\end{pmatrix}\\
M^A_\mu=
\begin{pmatrix}
 0 & -\frac{1}{2} & 0 & 0 \\
 -\frac{1}{2} & -\alpha  \mu_A-1 & 1 & -\frac{1}{2} \\
 0 & 1 & 0 & 0 \\
 0 & -\frac{1}{2} & 0 & 0 \\
\end{pmatrix},
\qquad
M^A_\beta=
\begin{pmatrix}
 -\tfrac{\beta_A}{\alpha } & -\tfrac{\beta_A}{\alpha }-\tfrac{1}{2} & \tfrac{2 \beta_A}{\alpha } & -\tfrac{\beta_A}{\alpha } \\
 -\tfrac{\beta_A}{\alpha }-\tfrac{1}{2} & -\tfrac{\beta_A}{\alpha }-1  & \tfrac{2 \beta_A}{\alpha }+1 & -\tfrac{\beta_A}{\alpha }-\tfrac{1}{2} \\
 \tfrac{2 \beta_A}{\alpha } & \tfrac{2 \beta_A}{\alpha }+1 & -\tfrac{4 \beta_A}{\alpha } & \tfrac{2 \beta_A}{\alpha } \\
 -\tfrac{\beta_A}{\alpha } & -\tfrac{\beta_A}{\alpha }-\tfrac{1}{2} & \tfrac{2 \beta_A}{\alpha } & -\tfrac{\beta_A}{\alpha } 
\end{pmatrix},\\
M^A_L=
\begin{pmatrix}
 -1 & -1 & 2 & -1 \\
 -1 & \alpha ^2L_A^2 -1 & 2 & -1 \\
 2 & 2 & -4 & 2 \\
 -1 & -1 & 2 & -1
\end{pmatrix},
\qquad
M^B_\mu=
\begin{pmatrix}
 0 & 0 & \tfrac{1}{2} & 0 \\
 0 & 0 & 0 & 0 \\
 \tfrac{1}{2} & 0 & -\alpha  \mu_B-1 & 0 \\
 0 & 0 & 0 & 0
\end{pmatrix},\\
M^B_\beta=
\begin{pmatrix}
 -\tfrac{\beta_B}{\alpha } & 0 & \tfrac{\beta_B}{\alpha }+\tfrac{1}{2} & 0 \\
 0 & 0 & 0 & 0 \\
 \tfrac{\beta_B}{\alpha }+\tfrac{1}{2} & 0 & -\tfrac{\beta_B}{\alpha }-1 & 0 \\
 0 & 0 & 0 & 0 
\end{pmatrix},
\qquad
M^B_L=
\begin{pmatrix}
 -1 & 0 & 1 & 0 \\
 0 & 0 & 0 & 0 \\
 1 & 0 &  \alpha ^2L_B^2-1 & 0 \\
 0 & 0 & 0 & 0
\end{pmatrix},\\
M^C_\mu=
\begin{pmatrix}
 0 & 0 & 0 & 0 \\
 0 & 0 & 0 & 0 \\
 0 & 0 & -\alpha \mu_C & \tfrac{1}{2} \\
 0 & 0 & \tfrac{1}{2} & 0
\end{pmatrix},
\qquad
M^C_\beta=
\begin{pmatrix}
 0 & 0 & 0 & 0 \\
 0 & 0 & 0 & 0 \\
 0 & 0 & 0 & \tfrac{1}{2} \\
 0 & 0 & \tfrac{1}{2} & -\tfrac{\beta_C}{\alpha }
\end{pmatrix},
\qquad
M^C_L=
\begin{pmatrix}
 0 & 0 & 0 & 0 \\
 0 & 0 & 0 & 0 \\
 0 & 0 & \alpha ^2L_C^2 & 0 \\
 0 & 0 & 0 & -1
\end{pmatrix}.
\end{gather*}

The objective $\tr(M_{O}G)$ corresponds to $\|z-\theta(z_B-z_A)-z'+\theta(z_B'-z_A')\|^2$.
The equality constraint $ \tr(M_{I}G)=1$ corresponds to $\|z-z'\|^2= 1$.
The other $9$ inequality constraints correspond to the three assumptions on the three operators. In particular, 
$\tr(M_\mu^AG)\ge 0$, $\tr(M_\beta^AG)\ge 0$, and $\tr(M_L^AG)\ge 0$ respectively correspond to the $\mu$-strong monotonicity, $\beta$-cocoercivity, and $L$-Lipschitz continuity assumptions on $A$ respectively.
The assumptions on $B$ and $C$ have analogous correspondences.

\section{Full dual OSPEP}
\label{appendix:full_dual_ospep}
We state the full dual OSPEP with the same operator classes as in Section~\ref{appendix:full_primal_ospep}.
The dual OSPEP with fewer assumptions will be of an analogous form with fewer $\lambda$-variables.

\begin{equation*}
\begin{tabular}{ll}
 $\displaystyle \minimize{}$ & $ \rho^2$\\
 subject to & 
 $\lambda^A_\mu, \lambda^A_\beta, \lambda^A_L\ge 0$\\
 &$\lambda^B_\mu, \lambda^B_\beta, \lambda^B_L\ge 0$\\
 &$\lambda^C_\mu, \lambda^C_\beta, \lambda^C_L\ge 0$\\
 &$S(\rho^2,\lambda^A_\mu, \lambda^A_\beta, \lambda^A_L, 
 \lambda^B_\mu, \lambda^B_\beta, \lambda^B_L, 
 \lambda^C_\mu, \lambda^C_\beta, \lambda^C_L,\theta,\alpha)\succeq 0$
\end{tabular}
\end{equation*}
where $\rho^2,\lambda^A_\mu, \lambda^A_\beta, \lambda^A_L, 
 \lambda^B_\mu, \lambda^B_\beta, \lambda^B_L, 
 \lambda^C_\mu, \lambda^C_\beta, \lambda^C_L\in \reals$ are the optimization variables and
\begin{equation*}
    \begin{aligned}
S(\rho^2,\lambda^A_\mu, \lambda^A_\beta, \lambda^A_L, 
 \lambda^B_\mu, \lambda^B_\beta, \lambda^B_L, 
 \lambda^C_\mu, \lambda^C_\beta, \lambda^C_L,\theta,\alpha)=\rho^2 M_I - M_O&-\lambda_\mu^A M_\mu^A-\lambda_\beta^A M_\beta^A-\lambda_L^A M_L^A\\
    &-\lambda_\mu^B M_\mu^B-\lambda_\beta^B M_\beta^B-\lambda_L^B M_L^B\\
    &-\lambda_\mu^C M_\mu^C-\lambda_\beta^C M_\beta^C-\lambda_L^C M_L^C
    \end{aligned}
\end{equation*}
is symmetric. The matrix can also explicitly be written as
\[
S(\rho^2,\lambda^A_\mu, \lambda^A_\beta, \lambda^A_L, 
\lambda^B_\mu, \lambda^B_\beta, \lambda^B_L, 
\lambda^C_\mu, \lambda^C_\beta, \lambda^C_L,\theta,\alpha)=\begin{pmatrix}
S_{1,1} & S_{2,1} & S_{3,1} & S_{4,1} \\
S_{2,1} & S_{2,2} & S_{3,2} & S_{4,2} \\
S_{3,1} & S_{3,2} & S_{3,3} & S_{4,3} \\
S_{4,1} & S_{4,2} & S_{4,3} & S_{4,4}
\end{pmatrix}
\]
with
\begin{align*}
S_{1,1}&=\rho ^2-1+\tfrac{\beta_A}{\alpha} \lambda_\beta^A+\tfrac{\beta_B}{\alpha} \lambda_\beta^B+\lambda_L^A+\lambda_L^B,\\
S_{2,1}&=\tfrac{1}{2} (2 \tfrac{\beta_A}{\alpha} \lambda_\beta^A-2 \theta +\lambda_\beta^A+2 \lambda_L^A+\lambda_\mu^A),\\
S_{3,1}&=-2 \tfrac{\beta_A}{\alpha} \lambda_\beta^A-\tfrac{\beta_B}{\alpha} \lambda_\beta^B+\theta -2 \lambda_L^A-\tfrac{\lambda_\beta^B}{2}-\lambda_L^B-\tfrac{\lambda_\mu^B}{2},\\
S_{4,1}&=\tfrac{\beta_A}{\alpha} \lambda_\beta^A+\lambda_L^A,\\
S_{2,2}&=\tfrac{\beta_A}{\alpha} \lambda_\beta^A-\theta ^2+\lambda_\beta^A+\lambda_L^A+\lambda_\mu^A \alpha\mu_A+\lambda_\mu^A-\lambda_L^A \alpha^2 L_A^2,\\
S_{3,2}&=-(2 \tfrac{\beta_A}{\alpha}+1) \lambda_\beta^A+\theta ^2-2 \lambda_L^A-\lambda_\mu^A,\\
S_{4,2}&=\tfrac{1}{2} (2 \tfrac{\beta_A}{\alpha} \lambda_\beta^A+\lambda_\beta^A+2 \lambda_L^A+\lambda_\mu^A),\\
S_{3,3}&=4 \tfrac{\beta_A}{\alpha} \lambda_\beta^A+\tfrac{\beta_B}{\alpha} \lambda_\beta^B-\theta ^2+4 \lambda_L^A+\lambda_\beta^B+\lambda_L^B+\lambda_\mu^B \alpha\mu_B+\lambda_\mu^B+\lambda_\mu^C \alpha\mu_C-\lambda_L^B \alpha^2 L_B^2-\lambda_L^C \alpha^2 L_C^2,\\
S_{4,3}&=\tfrac{1}{2} (-4 \tfrac{\beta_A}{\alpha} \lambda_\beta^A-4 \lambda_L^A-\lambda_\beta^C-\lambda_\mu^C),\\
S_{4,4}&=\tfrac{\beta_A}{\alpha} \lambda_\beta^A+\tfrac{\beta_C}{\alpha} \lambda_\beta^C+\lambda_L^A+\lambda_L^C.
\end{align*}

\section{Proofs of results in Section~\ref{sec:tight_DRS}}
\label{s:main-proof}
We now prove Theorems~\ref{thm:DRS_coco_strmonotone} and \ref{thm:DRS_Lipschitz_strmonotone}.
The approach is to provide an upper bound and a lower bound for each case (5 cases for Theorem~\ref{thm:DRS_coco_strmonotone} and 3 cases for Theorem~\ref{thm:DRS_Lipschitz_strmonotone}).
Since the upper and lower bounds match, weak duality tells us that the bounds are optimal, i.e., the contraction factors are tight.

In the language of the SDPs, the upper and lower bounds correspond to primal and dual feasible points, and their optimality is certified since they match ($0$ duality gap).
Note that the strong duality result of Theorem~\ref{thm:duality} guarantees the existence of lower bounds matching the optimal upper bounds. Here, we explicitly provide lower bounds to certify the upper bounds are indeed optimal.

The proofs rely on inequalities that we assert by saying ``It is possible to verify that ....''
Whenever we do so, we provide a rigorous (and arduous) verification separately in Section~\ref{ss:verification}.
We make this separation because the verifications are purely algebraic and do not illuminate the main proof.
As an alternative means of verification, we provide code that uses symbolic manipulation to verify the inequalities.

\subsection{Proof of Theorem~\ref{thm:DRS_coco_strmonotone}}
\label{ss:proof_thm_drs_coco_smonotone}
Define
\begin{align*}
R_\mathrm{(a)}&=
\left\{(\mu,\beta,\theta)\,\Big|\,
 \mu\beta-\mu+\beta<0,\, \theta \leq 2 \tfrac{(\beta+1) (\mu-\beta-\mu\beta)}{\mu+\mu\beta-\beta-\beta^2-2\mu\beta^2},\,\mu>0,\,\beta>0,\,\theta\in(0,2)
 \right\}\\
 R_\mathrm{(b)}&=
\left\{(\mu,\beta,\theta)\,\Big|\,
\mu\beta-\mu-\beta>0,\, \theta \leq 2  \tfrac{\mu^2+\beta^2+\mu\beta+\mu+\beta-\mu^2\beta^2}{\mu^2+\beta^2+\mu^2\beta+\mu\beta^2+\mu +\beta-2\mu^2\beta^2},\,\mu>0,\,\beta>0,\,\theta\in(0,2)
\right\}\\
R_\mathrm{(c)}&=
\left\{(\mu,\beta,\theta)\,\Big|\,
\theta \geq 2 \tfrac{\mu\beta+\mu+\beta}{2\mu\beta+\mu+\beta},\,\mu>0,\,\beta>0,\,\theta\in(0,2)
 \right\}\\
 R_\mathrm{(d)}&=
\left\{(\mu,\beta,\theta)\,\Big|\,
 \mu\beta+\mu-\beta<0,\,\theta \leq 2\tfrac{(\mu+1) (\beta-\mu-\mu\beta)}{\beta+\mu\beta-\mu-\mu^2-2\mu^2\beta},\,\mu>0,\,\beta>0,\,\theta\in(0,2)
 \right\}\\
 R_\mathrm{(e)}&=
\left\{(\mu,\beta,\theta)\,\Big|\,
\mu>0,\,\beta>0,\,\theta\in(0,2)
 \right\}\backslash R_\mathrm{(a)}\backslash R_\mathrm{(b)}\backslash R_\mathrm{(c)}\backslash R_\mathrm{(d)}
\end{align*}
which correspond to the 5 cases of Theorem~\ref{thm:DRS_coco_strmonotone}.

\subsubsection{Upper bounds}
\label{ss:first_proof_upper}
By weak duality between the primal and dual OSPEP, $\rho$ is a valid contraction factor if there exists $\rho$, $\lambda^A_\mu\geq 0$, and $\lambda_\beta^B\geq 0$ such that
\[
S=\begin{pmatrix}
\rho ^2+\beta  \lambda_\beta^B-1 & -\theta+\frac{\lambda^A_\mu}{2}  & \theta -(\frac{1}{2}+\beta)\lambda_\beta^B \\
 -\theta+\frac{\lambda^A_\mu}{2}  & -\theta ^2+(1+\mu)\lambda^A_\mu  & \theta ^2-\lambda^A_\mu \\
 \theta -(\frac{1}{2}+\beta)\lambda_\beta^B & \theta ^2-\lambda^A_\mu & -\theta ^2+(1+\beta)\lambda_\beta^B 
\end{pmatrix}
\succeq 0
.
\]
For each of the 5 cases, we establish an upper bound by providing values for  $\rho$, $\lambda^A_\mu\ge 0$, and $\lambda_\beta^B\ge 0$ such that $S\succeq 0$.
We establish $S\succeq 0$ with a sum-of-squares factorization 
\begin{equation}
\label{eq:proof_thm_A_strmonotone_B_cocoercive}
\mathrm{Tr}(SG(z,z_A,z_B))=K_1\normsq{m_1 z_A + m_2 z_B +m_3 z}+K_2 \normsq{m_4 z_B+m_5 z},
\end{equation}
for some $m_1,m_2,m_3,m_4,m_5\in \reals$ and $K_1,K_2\ge0$,
where
\begin{equation}
G(z,z_A,z_B)=
\begin{pmatrix}
\|z\|^2&\langle z,z_A\rangle &\langle z,z_B\rangle\\
\langle z,z_A\rangle &\|z_A\|^2&\langle z_A,z_B\rangle\\
\langle z,z_B\rangle&\langle z_A,z_B\rangle &\|z_B\|^2\\
\end{pmatrix}\in \mathbb{S}^3_+
\label{eq:grammian_small}
\end{equation}
for $z,z_A,z_B\in \hilbert$.
By arguments similar to that of Lemma~\ref{lem:cholesky},
 $G(z,z_A,z_B)\in\mathbb{S}^3_+$ can be any $3\times 3$ positive semidefinite matrix.
Therefore
\[
\mathrm{Tr}(SG(z,z_A,z_B))\ge 0,\,
\forall
z,z_A,z_A\in \hilbert
\quad\Leftrightarrow\quad
\mathrm{Tr}(SM)\ge 0,\,
\forall
M\succeq 0
\quad\Leftrightarrow\quad
S\succeq 0,
\]
i.e., the sum-of-squares factorization proves $S\succeq 0$.
(We only need 2 terms in the sum-of-squares factorization, because it turns out that the optimal $S$ has rank at most $2$.)


		\paragraph{Case (a)}
		When $(\mu,\beta,\theta)\in R_\mathrm{(a)}$, we use
		 \[
		 \rho^2=\left(1-\theta\tfrac{\beta}{\beta+1}\right)^2,\quad\lambda^A_\mu=2\theta\tfrac{1+\beta}{1-\beta}\left(1-\theta\tfrac{\beta}{\beta+1}\right),\quad\lambda_\beta^B=2 \theta \left(1-\theta\tfrac{\beta}{\beta+1}\right).
		 \]
		 This gives us the sum-of-square factorization \eqref{eq:proof_thm_A_strmonotone_B_cocoercive} with
		\begin{gather*}
		m_1=-1,\quad	m_2=\tfrac{(2-\theta )(\beta +1) }{(2-\theta )(\beta +1) +2 \mu  (1+\beta-\theta\beta)},\quad
		m_3=-\tfrac{(2-\theta)\beta}{(2-\theta )(\beta +1) +2 \mu  (1+\beta-\theta\beta)},\\
		m_4=-\tfrac{\beta +1}{\beta },
		 m_5=1,\quad K_1=\theta\tfrac{(2-\theta)(\beta+1)+2\mu(1+\beta-\theta\beta)}{1-\beta},\quad	K_2=2 \beta ^2 \theta  \tfrac{1+\beta-\theta \beta}{(1-\beta) (\beta +1)^2}\tfrac{(\beta -1) \mu  (2 \beta  (\theta -1)+\theta -2)-(2-\theta )\beta  (\beta+1)}{(2-\theta )(\beta+1)+2 \mu  (1+\beta-\theta\beta )}.
		\end{gather*}
		It is possible to verify that there is no division by $0$ in the definitions and that $\lambda^A_\mu,\lambda_\beta^B,K_1,K_2\geq 0$ when $(\mu,\beta,\theta)\in R_\mathrm{(a)}$.
		
\paragraph{Case (b)}
		When $(\mu,\beta,\theta)\in R_\mathrm{(b)}$, we use
		\[\rho^2=\left(1-\theta\tfrac{1+\mu\beta}{(\mu+1)(\beta+1)}  \right)^2,\quad
		\lambda^A_\mu=2 \theta \tfrac{\beta+1}{\beta-1} \left(1-\theta\tfrac{1+\mu\beta}{(\mu +1) (\beta+1)}\right),\quad
		\lambda_\beta^B= 2 \theta  \tfrac{\mu-1}{\mu+1}\left(1-\theta\tfrac{1+\mu\beta}{(\mu +1) (\beta+1)}\right).
		\]
		 This gives us the sum-of-square factorization \eqref{eq:proof_thm_A_strmonotone_B_cocoercive} with
		\begin{gather*}
		m_1=-1,\quad	m_2=\tfrac{2}{\mu +1}-\tfrac{(2-\theta)(\beta +1) }{(2-\theta) (\beta+1)+2 \mu  (1+\beta-\theta\beta)},\quad m_3=-\tfrac{1}{\mu +1}+\tfrac{(2-\theta )\beta}{(2-\theta) (\beta+1)+2 \mu  (1+\beta-\theta\beta)},\\
		m_4=-\tfrac{\beta +1}{\beta },\quad 	m_5=1,  \quad	K_1=\theta\tfrac{(2-\theta)(\beta+1)+2\mu(1+\beta-\theta\beta)}{\beta-1},\\
		K_2=2 \theta\beta^2 \tfrac{(\beta+1)(\mu+1)-\theta(1+\mu\beta)}{(\mu +1)^2 (\beta-1) (\beta+1)^2}\, \tfrac{ \mu\beta^2  (-2 \theta  \mu +\theta +2 \mu )+\theta\beta^2 +(\theta -2) \mu  (\mu +1) +\beta \left(\theta  \mu ^2+\theta -2 \mu -2\right)-2\beta^2}{(2-\theta) (\beta+1)+2 \mu  (1+\beta-\theta\beta )}.
		\end{gather*}
		It is possible to verify that there is no division by $0$ in the definitions and that $\lambda^A_\mu,\lambda_\beta^B,K_1,K_2\geq 0$ when $(\mu,\beta,\theta)\in R_\mathrm{(b)}$.

\paragraph{Case (c)}
		When $(\mu,\beta,\theta)\in R_\mathrm{(c)}$, we use
		 \[
		 \rho^2=(\theta -1)^2,\quad \lambda^A_\mu= 2\theta (\theta -1),\quad\lambda_\beta^B= 2\theta (\theta -1)
		 \]
		 This gives us the sum-of-square factorization \eqref{eq:proof_thm_A_strmonotone_B_cocoercive} with
		 \begin{gather*}
		m_1=-1,\quad m_2=-\tfrac{2-\theta }{2 (\theta -1) \mu +\theta -2}, \quad m_3=\tfrac{2-\theta }{2 (\theta -1) \mu +\theta -2},\quad m_4=-1,\quad m_5=1,\\
		K_1=\theta  (2 (\theta -1) \mu +\theta -2),\quad K_2=2 (\theta -1) \theta \tfrac{ \theta\beta +\theta  \mu  (1+2\beta)-2 (\mu +\beta+\mu\beta)}{2 (\theta -1) \mu +\theta -2}.
		\end{gather*}
		It is possible to verify that there is no division by $0$ in the definitions and  that $\lambda^A_\mu,\lambda_\beta^B,K_1,K_2\geq 0$ when $(\mu,\beta,\theta)\in R_\mathrm{(c)}$.

\paragraph{Case (d)}
		When $(\mu,\beta,\theta)\in R_\mathrm{(d)}$, we use
		\[
		\rho^2=\left(1-\theta\tfrac{\mu}{\mu+1}\right)^2,\quad \lambda^A_\mu=2\theta\left(1-\theta\tfrac{\mu}{\mu +1}\right),\quad\lambda_\beta^B=2 \theta  \tfrac{1-\mu}{1+\mu}\left(1-\theta\tfrac{ \mu}{\mu +1}\right).
		\]
		 This gives us the sum-of-square factorization \eqref{eq:proof_thm_A_strmonotone_B_cocoercive} with
		 \begin{gather*}
		m_1=-1, \quad m_2=\tfrac{2-\theta }{2 (\theta -1) \mu +\theta -2}+\tfrac{2}{\mu +1},\quad m_3=-\tfrac{\theta  \mu }{(\mu +1) (2 (\theta -1) \mu +\theta -2)},\quad m_4=-1,\quad m_5=1,\\
		K_1=-\theta  (2 (\theta -1) \mu +\theta -2),\quad K_2=2 \theta  ((\theta -1) \mu -1)\tfrac{ -\theta  (\beta-\mu^2 (1+2\beta) -\mu(1-\beta))-2 (\mu +1) (\mu\beta +\mu  -\beta)}{(\mu +1)^2 (2 (\theta -1) \mu +\theta -2)}.
		\end{gather*}
		It is possible to verify that there is no division by $0$ in the definitions and  that $\lambda^A_\mu,\lambda_\beta^B,K_1,K_2\geq 0$ when $(\mu,\beta,\theta)\in R_\mathrm{(d)}$.

\paragraph{Case (e)}
		When $(\mu,\beta,\theta)\in R_\mathrm{(e)}$, we use
		\begin{gather*}
		\rho^2=\tfrac{2-\theta}{4} {\tfrac{((2-\theta)\mu (\beta+1)+{\theta}  \beta(1-\mu))\, ((2-\theta)\beta ({\mu}+1)+{\theta}\mu  (1-\beta))}{\mu\beta(2\mu \beta (1-\theta )  + (2-\theta ) (\mu+\beta +1))}}
		,\quad\lambda^A_\mu=\theta\tfrac{(2-\theta)\mu(\beta+1)+\theta\beta(1-\mu)}{\beta},\\
		\lambda_\beta^B=\tfrac{\theta  (2-\theta )}{\beta}\tfrac{(2-\theta ) \mu(\beta +1)   +\theta\beta     (1-\mu )}{2\mu \beta (1-\theta )  + (2-\theta ). (\mu+\beta +1)}.
		\end{gather*}
		 This gives us the sum-of-square factorization \eqref{eq:proof_thm_A_strmonotone_B_cocoercive} with
		 \begin{gather*}
		m_1=-2 \mu  \tfrac{2 (\theta -1) \mu\beta +\theta\beta +(\theta -2) (\mu +1)-2\beta}{\beta},\quad m_2=2 \mu \tfrac{ (\beta +1) (\theta -2)+\beta  \theta }{\beta },\quad m_3=(\theta -2)-\mu\tfrac{  (\beta +1) (\theta -2)+\beta  \theta }{\beta }\\
		m_4=0,\quad m_5=0,\quad 	K_1=\tfrac{\beta}{4\mu}\,\,\tfrac{\theta }{2 \mu\beta  (1-\theta) +(2-\theta) (\mu +\beta+1)},\quad K_2=0.
		\end{gather*}
		It is possible to verify that there is no division by $0$ in the definitions and
		 that $\rho^2,\lambda^A_\mu,\lambda_\beta^B,K_1,K_2\geq 0$ when $(\mu,\beta,\theta)\in R_\mathrm{(e)}$.

\begin{rem}[Constructing a classical proof with a dual solution]
\label{rem:classical_proof}
Given $\rho^2$, $\lambda^A_\mu\ge 0$, and $\lambda_\beta^B\ge0$ such that $S\succeq 0$, one can construct a classical proof establishing $\rho^2$ as a valid contraction factor without relying on the OSPEP methodology.
With $G$ defined as in \eqref{eq:grammian_small}, we have
\[
\mathrm{Tr}(SG(z,z_A,z_B))=\rho^2 \normsq{z}-\normsq{z-\theta(z_B-z_A)}-\lambda^A_\mu \bigg( \inner{\Delta A}{z_A}-\mu \normsq{z_A}\bigg)-\lambda_\beta^B \bigg( \inner{\Delta B}{z_B}- \beta \normsq{\Delta B}\bigg),
\]
with $\Delta A=2z_b-z-z_A$ and $\Delta B=z-z_B$.
The sum-of-square factorization gives us
\begin{align*}
&\rho^2 \normsq{z}-\normsq{z-\theta(z_B-z_A)}-	\lambda^A_\mu \bigg( \inner{\Delta A}{z_A}-\mu \normsq{z_A}\bigg) - \lambda_\beta^B \bigg( \inner{\Delta B}{z_B}- \beta \normsq{\Delta B}\bigg)\\
&\qquad\qquad=K_1\normsq{m_1 z_A + m_2 z_B +m_3 z}+K_2 \normsq{m_4 z_B+m_5 z}.
\end{align*}
Reorganizing, we get
\begin{align*}
\normsq{z-\theta(z_B-z_A)}&=
\rho^2 \normsq{z}-	\lambda^A_\mu \bigg( \inner{\Delta A}{z_A}-\mu \normsq{z_A}\bigg) - \lambda_\beta^B \bigg( \inner{\Delta B}{z_B}- \beta \normsq{\Delta B}\bigg)\\
&\qquad\qquad-(\text{sum of squares}).
\end{align*}
Now revert the change of variables of Section~\ref{ss:conv-ospep-transformation} by substituting
$z\mapsto z-z'$,
$z-\theta(z_B-z_A)\mapsto T^\mathrm{DRS}(z)-T^\mathrm{DRS}(z')$,
$z_B\mapsto J_Bz-J_Bz'$, and 
$z_A\mapsto J_A(2J_Bz-z)-J_A(2J_Bz'-z')$
to get a classical proof of the form
\begin{align*}
\|T^\mathrm{DRS}(z;A,B,1,\theta)&-T^\mathrm{DRS}(z';A,B,1,\theta)\|^2
= \rho^2 \normsq{z-z'}\\
&-	\lambda^A_\mu \bigg( \inner{\Delta A}{J_A(2J_Bz-z)-J_A(2J_Bz'-z')}-\mu \normsq{J_A(2J_Bz-z)-J_A(2J_Bz'-z')}\bigg)\\
&- \lambda_\beta^B \bigg( \inner{\Delta B}{J_Bz-J_Bz'}- \beta \normsq{\Delta B}\bigg)\\
&-(\text{sum of squares})
\end{align*}
where now $\Delta A=2J_Bz-z-J_A(2J_Bz-z)-2J_Bz'+z'+J_A(2J_Bz'-z')$ and $\Delta B=z-J_Bz-z'+J_Bz'$.
Since $A$ is $\mu$-strong monotone, we have
\[
\inner{\Delta A}{J_A(2J_Bz-z)-J_A(2J_Bz'-z')}-\mu \normsq{J_A(2J_Bz-z)-J_A(2J_Bz'-z')}\ge 0.
\]
Since $B$ is $\beta$-cocoercive, we have
\[
\inner{\Delta B}{J_Bz-J_Bz'}- \beta \normsq{\Delta B}\ge 0.
\]
Since $\lambda^A_\mu\ge0$ and $\lambda_\beta^B\ge 0$, we have a valid proof establishing
\[
\|T^\mathrm{DRS}(z;A,B,1,\theta)-T^\mathrm{DRS}(z';A,B,1,\theta)\|^2
\le \rho^2 \normsq{z-z'}.
\]
\end{rem}

\subsubsection{Lower bounds}
\label{sss:lower1}
We now show that for the five cases, there are operators $A\in\M_\mu$ and $B\in\C_\beta$ and inputs $z_1,z_2\in\hilbert$ such that
\[ \norm{Tz_1-Tz_2}\geq \rho \norm{z_1-z_2},\]
where  $T=I-\theta J_B + \theta J_A (2J_B-I)$ 
and $\rho$ is given by Theorem~\ref{thm:DRS_coco_strmonotone}.
We construct the lower bounds for $\reals^2$, since the construction can be embedded into the higher dimensional space $\hilbert$.

\paragraph{Case (a)}
	$A=N_{\{0\}}$, $B=\tfrac{1}{\beta}I$, and $T=\left(1-\theta\tfrac{\beta}{1+\beta}\right)I$.
	This construction provides the lower bound $\rho=\lvert1-\theta \tfrac{\beta}{\beta+1}\rvert$, and it is valid when $(\mu,\beta,\theta)\in R_\mathrm{(a)}$. (In fact, it is always valid.)

\paragraph{Case (b)}
	$A=\mu I$, $B=\tfrac{1}{\beta}I$, and $T=\left(1-\theta\tfrac{1+\beta\mu}{(\beta+1)(\mu+1)}\right)I$.
	This construction provides the lower bound $\rho=\lvert 1-\theta\tfrac{1+\mu\beta}{(\mu+1)(\beta+1)}\rvert$, and it is valid when $(\mu,\beta,\theta)\in R_\mathrm{(b)}$. (In fact, it is always valid.)

\paragraph{Case (c)}
	$A=N_{\{0\}}$, $B=0 $, and $T=(1-\theta)I$.
	This construction provides the lower bound $\rho=\lvert 1-\theta \rvert$, and it is valid when $(\mu,\beta,\theta)\in R_\mathrm{(c)}$. (In fact, it is always valid.)

\paragraph{Case (d)}
	$A=\mu I$, $B=0$, and $T=\left(1-\theta\tfrac{\mu}{\mu+1}\right)I$.
	This construction provides the lower bound $\rho=\lvert 1-\theta\tfrac{\mu}{\mu +1} \rvert$, and it is valid when $(\mu,\beta,\theta)\in R_\mathrm{(d)}$. (In fact, it is always valid.)

\paragraph{Case (e)}	
Define	\[K=\tfrac{((2-\theta ) \mu  (\mu +1)+\beta  (\mu -1) (2-\theta+ 2\mu (1-\theta )   ))\, ((2-\theta ) \mu+\beta  (2 (1-\theta ) \mu -\theta +2) )}{\beta ^2 (\theta -2) \mu  (2 \beta  (\theta -2)-\theta -2)+\beta ^2 (2 \beta +1) (\theta -2)^2+(2 \beta -1) \mu ^3 (2 \beta  (\theta -1)+\theta -2)^2-(2 \beta -1) \mu ^2 (2 \beta -\theta +2) (2 \beta  (\theta -1)+\theta -2)}.\]
It is possible to verify that there is no division by $0$ in the definition of $K$ and that $0<K< 1/\beta^2$ when $(\mu,\beta,\theta)\in R_{\mathrm{(e)}}$.
Let 
\[A=\begin{pmatrix}
\mu  & -a \\
a & \mu  \\
\end{pmatrix},
\qquad
B=\begin{pmatrix} 
\beta K & -\sqrt{K-K^2\beta^2} \\
\sqrt{K-K^2\beta^2} & \beta K
\end{pmatrix},
\]
where
\[a=\tfrac{2 \theta  \mu +\theta-2 \beta  \theta  K \mu -\theta  K+2 K \mu +2 K-2 \mu -2+\sqrt{4 (\theta -2)^2 (\mu +1)^2 \left(K-\beta ^2 K^2\right)+((\theta -2) (K-1)-2 \mu  (\theta -\beta  \theta  K+K-1))^2}}{2 (\theta -2) \sqrt{K-\beta ^2 K^2}}.\]
Since $\tfrac12(A+A^T)$ has two eigenvalues equal to $\mu$
and $\tfrac12 (B^{-T}+B^{-1})$ has two eigenvalues equal to $\beta$, we have $A\in \mathcal{M}_\mu$ and $B\in \mathcal{C}_\beta$.
Then
\[T=\begin{pmatrix}
T_1 & -T_2 \\
T_2 & T_1\\
\end{pmatrix} \]
with 
\begin{equation*}
    \begin{aligned}
    T_1&=\tfrac{-\theta  \left(a^2 (\beta  K+1)+(\mu +1) (K (\beta  \mu +\beta +1)+\mu )\right)+\left(a^2+(\mu +1)^2\right) (2 \beta  K+K+1)-2 a \theta  \sqrt{K-\beta ^2 K^2}}{\left(a^2+(\mu +1)^2\right) (2 \beta  K+K+1)},\\
    T_2&=\tfrac{\theta  \left(a^2 \sqrt{K-\beta ^2 K^2}+a (K-1)+\left(\mu ^2-1\right) \sqrt{K-\beta ^2 K^2}\right)}{\left(a^2+(\mu +1)^2\right) (2 \beta  K+K+1)}
    \end{aligned},
\end{equation*}
and 
\[T^TT=\begin{pmatrix}
T_1^2+T_2^2 & 0 \\ 0 & T_1^2+T_2^2
\end{pmatrix}.\]
So this construction provides the lower bound $\rho^2=T_1^2+T_2^2$.
Under the assumption $\theta <2\tfrac{\beta  \mu +\beta +\mu}{2 \beta  \mu +\beta +\mu }$ (i.e., when $(\mu,\beta,\theta)\notin R_\mathrm{(c)}$), 
the lower bound simplifies to
\[
\rho=
\tfrac{\sqrt{2-\theta}}{2}
\sqrt{\tfrac{((2-\theta)\mu (\beta+1)-{\theta\beta}  (\mu-1))\, ((2-\theta)\beta(\mu+1)-{\theta}\mu (\beta-1))}{\mu\beta(2\mu \beta (1-\theta )  + (2-\theta ) (\mu+\beta +1))}},
\]
and it is valid when $(\mu,\beta,\theta)\in R_\mathrm{(e)}$.

\subsection{Proof of Theorem~\ref{thm:DRS_Lipschitz_strmonotone}}
\label{ss:proof_thm_drs_lip-smon}
Define
\begin{align*}
R_\mathrm{(a)}&=
\left\{(\mu,L,\theta)\,\Big|\,\mu\tfrac{-\left(2 (\theta -1) \mu +\theta-2\right)+L^2\left(\theta -2(1+ \mu)\right)}{\sqrt{ (2 (\theta -1) \mu +\theta -2)^2+L^2 (\theta -2 (\mu +1))^2}}\leq \sqrt{L^2+1},\,\mu>0,\,L>0,\,\theta\in(0,2)
 \right\}\\
 R_\mathrm{(b)}&=
\left\{(\mu,L,\theta)\,\Big|\,
L<1,\, \mu >\tfrac{L^2+1}{(L-1)^2},\,
\theta \leq \tfrac{2 (\mu +1) (L+1)(\mu +\mu  L^2-L^2-2 \mu  L-1)}{2 \mu ^2-\mu +\mu  L^3-L^3-3 \mu  L^2-L^2-2 \mu ^2 L-\mu  L-L-1},\,\mu>0,\,L>0,\,\theta\in(0,2)
\right\}\\
 R_\mathrm{(c)}&=
\left\{(\mu,L,\theta)\,\Big|\,
\mu>0,\,L>0,\,\theta\in(0,2)
 \right\}\backslash R_\mathrm{(a)}\backslash R_\mathrm{(b)}
\end{align*}
which correspond to the 3 cases of Theorem~\ref{thm:DRS_Lipschitz_strmonotone}.

\subsubsection{Upper bounds}\label{sec:upper_bound_Lips}
The approach is similar to that of Section~\ref{ss:first_proof_upper}.
By weak duality between the primal and dual OSPEP, $\rho$ is a valid contraction factor if there exists $\rho$, $\lambda^A_\mu\ge 0$, $\lambda^B_L\ge 0$, and $\lambda^B_\mu\ge 0$ such that
\[ S= \begin{pmatrix}\rho ^2+\lambda^B_L-1 & \frac{\lambda^A_\mu}{2}-\theta  & \theta -\lambda^B_L-\frac{\lambda^B_\mu}{2} \\
 \frac{\lambda^A_\mu}{2}-\theta  & -\theta ^2+\lambda^A_\mu+\lambda^A_\mu \mu  & \theta ^2-\lambda^A_\mu \\
 \theta -\lambda^B_L-\frac{\lambda^B_\mu}{2} & \theta ^2-\lambda^A_\mu & -\lambda^B_L L^2-\theta ^2+\lambda^B_L+\lambda^B_\mu \\
\end{pmatrix}\succeq 0.
\]
We establish $S\succeq 0$ with a sum-of-squares factorization 
\begin{equation}
\label{eq:proof_thm_A_strmonotone_B_Lipschitz}
\mathrm{Tr}(SG(z,z_A,z_B))=K_1\normsq{m_1 z_A + m_2 z_B +m_3 z}+K_2 \normsq{m_4 z_B+m_5 z},
\end{equation}
for some $m_1,m_2,m_3,m_4,m_5\in \reals$ and $K_1,K_2\ge0$,
where $G(z,z_A,z_B)$ is as defined in \eqref{eq:grammian_small}.

\begin{rem}
As before, the dual matrix $S$ satisfies
\begin{equation*}
	\begin{aligned}
\mathrm{Tr}(SG(z,z_A,z_B))=\rho^2 \normsq{z}&-\normsq{z_+}-\lambda^A_\mu \left( \inner{\Delta A}{z_A}-\mu \normsq{z_A}\right) - \lambda^B_\mu \inner{\Delta B}{z_B} -\lambda^B_L \left(L^2\normsq{z_B}- \normsq{\Delta B}\right),
	\end{aligned}
\end{equation*}
with $\Delta A=2z_b-z-z_A$ and $\Delta B=z-z_B$.
One can use this to construct a classical proof establishing $\rho^2$ as a valid contraction factor without relying on the OSPEP methodology,
given $\rho$, $\lambda^A_\mu\ge 0$, $\lambda^B_L\ge 0$, and $\lambda^B_\mu\ge 0$ such that $S\succeq 0$.
\end{rem}


\paragraph{Case (a)}
 When $(\mu,L,\theta)\in R_\mathrm{(a)}$, we define
		\[C=\sqrt{\tfrac{ (2 (\theta -1) \mu +\theta -2)^2+L^2 (\theta -2 (\mu+1))^2}{L^2+1}},\]
		which satisfies $C>0$ for all values of $L,\mu,\theta>0$, and we use
	\begin{equation*}
	\begin{aligned}
	\rho^2  =\left(\tfrac{\theta+C}{2(\mu+1)}\right)^2,\quad 	\lambda^A_\mu=\tfrac{\theta(\theta +C)}{(\mu +1)},\quad \lambda^B_L=\tfrac{(2-\theta) \theta  \mu}{(\mu +1)\left(L^2+1\right)} \tfrac{  \theta+C }{C},\\
	\lambda^B_\mu=\tfrac{ \theta\left(\theta+C\right) }{(\mu +1)^2 C}\left(C+ \mu  \tfrac{\left(2 (\theta -1) \mu +\theta-2\right) -L^2\left(\theta -2 (\mu +1)\right)}{L^2+1}\right).
	\end{aligned}
	\end{equation*}
	This gives us the sum-of-square factorization \eqref{eq:proof_thm_A_strmonotone_B_Lipschitz} with
	\begin{gather*}
	K_1 = \theta  C,\quad m_1=-1,\quad 	m_2 = \tfrac{C-\theta  \mu }{C (1+\mu)},\quad	m_3 = \tfrac{ 2 (\mu +1)-(C+\theta)}{2 C (\mu +1)},\\
	K_2 = 0  ,\quad m_4=0,\quad 	m_5 = 0.
	\end{gather*}
	When $\mu,L>0$ and $\theta\in (0,2)$, we have $\lambda^A_\mu,\lambda^B_L,K_1,K_2\geq0$.
	Furthermore, $\lambda^B_\mu\geq 0$ when
	\[ C\ge \mu  \tfrac{-\left(2 (\theta -1) \mu +\theta-2\right) +L^2\left(\theta -2 (\mu +1)\right)}{L^2+1}, \]
	which is immediately equivalent to the main condition defining $R_\mathrm{(a)}$.

\paragraph{Case (b)}
 When $(\mu,L,\theta)\in R_\mathrm{(b)}$, we use
	\begin{gather*}
	\rho^2=\left( 1-\theta\tfrac{L+\mu}{(\mu+1)(L+1)}\right)^2\\
	\lambda^A_\mu=2 \theta\tfrac{1+L}{1-L} \left(1-\theta\tfrac{\mu +L}{(\mu +1) (L+1)}\right),\quad \lambda^B_L=\tfrac{\theta}{L} \tfrac{\mu -1}{\mu +1}  \left(1-\theta\tfrac{\mu +L}{(\mu +1) (L+1)}\right),\quad 	\lambda^B_\mu=0.
	\end{gather*}
	This gives us the sum-of-square factorization \eqref{eq:proof_thm_A_strmonotone_B_Lipschitz} with
	\begin{gather*}
	K_1 = \theta\tfrac{2 (\mu +1) (L+1)-\theta  (2 \mu +L+1)}{1-L},\\
	K_2 = \theta\tfrac{(L+1)(\mu+1)-\theta(L+\mu)}{(\mu +1)^2 (1-L) L (L+1)^2}\,\tfrac{2 (\mu +1) (L+1) \left(\mu  (1-L)^2-\left(L^2+1\right)\right)+\theta  \left(\mu  \left(1+L+3 L^2-L^3\right)+\left(1+L+L^2+L^3\right)+2 \mu ^2 (L-1)\right)}{2 (\mu +1) (L+1)-\theta  (2 \mu +L+1)}\\
	m_1 = -1,\qquad
	m_2 = \tfrac{2}{\mu +1}-\tfrac{(2-\theta ) (L+1)}{2 (\mu +1) (L+1)-\theta  (2 \mu +L+1)},\\
	m_3 = \tfrac{1}{1+\mu}\, \tfrac{\theta  (\mu+L) -2 L(\mu +1)}{2 (\mu +1) (L+1)-\theta  (2 \mu +L+1)},\qquad
	m_4 = -(1+L),\qquad
	m_5 = 1.
	\end{gather*}
	It is possible to verify that there is no division by $0$ in the definitions and 
	that $\lambda^A_\mu,\lambda^B_L,\lambda^B_\mu,K_1,K_2\geq 0$ when $(\mu,L,\theta)\in R_\mathrm{(b)}$.

\paragraph{Case (c)}
 When $(\mu,L,\theta)\in R_\mathrm{(c)}$, we use
	\begin{gather*}
	\rho^2=\tfrac{(2-\theta)}{4 \mu  \left(L^2+1\right)}\, \tfrac{\left(	\theta(L^2+1)-2\mu(\theta+L^2-1)	\right)
	\left(\theta  \left(1+2 \mu +L^2\right)-2 (\mu +1) \left(L^2+1\right)\right)}{2 \mu  \left(\theta +L^2-1\right)-(2-\theta ) \left(1-L^2\right)}
	\\
	\lambda^A_\mu=\theta  \left(\theta -\tfrac{2 \mu  \left(\theta +L^2-1\right)}{L^2+1}\right),\quad \lambda^B_L=\tfrac{(2-\theta) \theta   }{(L^2+1)}\, \tfrac{\theta  \left(L^2+1\right)-2 \mu  \left(\theta +L^2-1\right)}{(2-\theta) \left(1-L^2\right)-2 \mu \left(\theta +L^2-1\right)},\quad 
	\lambda^B_\mu=0.
	\end{gather*}
	This gives us the sum-of-square factorization \eqref{eq:proof_thm_A_strmonotone_B_Lipschitz} with
	\begin{gather*}
	K_1 = \tfrac{\theta }{4 \mu  \left(L^2+1\right) \left((2-\theta)(1-L^2)-2\mu (\theta+L^2-1)\right)}, \quad 	m_1 = 4 \mu ^2 \left(1-L^2-\theta\right)+2\mu (2-\theta )   \left(1-L^2\right),\\
	m_2 = 4 \mu  \left(L^2+\theta-1\right),\quad 
	m_3 = 2 \mu  \left(1-L^2-\theta\right)-(2-\theta ) \left(L^2+1\right),\\
	K_2=0,\quad m_4=0,\quad m_5=0.
	\end{gather*}
	It is possible to verify that there is no division by $0$ in the definitions and 
	that $\rho^2,\lambda^A_\mu,\lambda^B_L,\lambda^B_\mu,K_1,K_2\geq 0$ when $(\mu,L,\theta)\in R_\mathrm{(c)}$.

\subsubsection{Lower bounds}
\label{sss:lower2}
We now show that for the three cases, there are operators $A\in\M_\mu$ and $B\in\Li_L\cap\M$ and inputs $z_1,z_2\in\hilbert$ such that
\[ \norm{Tz_1-Tz_2}\geq \rho \norm{z_1-z_2},\]
where  $T=I-\theta J_B + \theta J_A (2J_B-I)$ 
and $\rho$ is given by Theorem~\ref{thm:DRS_Lipschitz_strmonotone}.
We construct the lower bounds for $\reals^2$, since the construction can be embedded into the higher dimensional space $\hilbert$.

		\paragraph{Case (a)} 
		Let 
		\[A=\mu\, \mathrm{Id}+N_{\{0\}\times \mathbb{R}},\quad B=L\begin{bmatrix}
		0 & 1 \\ -1 & 0
		\end{bmatrix}.\] Then
		\[ J_A=\tfrac{1}{\mu +1}\begin{bmatrix}
		0 & 0 \\
		0 & 1 \\
		\end{bmatrix},\quad J_B=\tfrac{1}{L^2+1}
		\begin{bmatrix}
		1 & -L \\
		L & 1 \\
		\end{bmatrix},
		 \]
		and
		\[T=\begin{bmatrix}
		1-\tfrac{\theta }{L^2+1} & \tfrac{L \theta }{L^2+1} \\
		-\tfrac{L \theta  (\mu -1)}{\left(L^2+1\right) (\mu +1)} & \tfrac{(-\theta +\mu +1) L^2-\theta  \mu +\mu +1}{\left(L^2+1\right) (\mu +1)} 
		\end{bmatrix}. \]
		The eigenvalues of $T^TT$ are given by
		\[\tfrac{\left(\theta \pm\sqrt{\tfrac{(2 (\theta -1) \mu +\theta -2)^2+L^2 (\theta -2 (\mu +1))^2}{L^2+1}}\right)^2}{4 (\mu +1)^2}.\]
		This construction provides the lower bound
		\[
		\rho=\tfrac{\theta +\sqrt{\tfrac{(2 (\theta -1) \mu +\theta -2)^2+L^2 (\theta -2 (\mu +1))^2}{L^2+1}}}{2 (\mu +1)},
		\]
		and it is valid when $(\mu,L,\theta)\in R_\mathrm{(a)}$. (In fact, it is always valid.)
		This construction was inspired by Example 5.3 of \cite{moursi2018douglas}.

\paragraph{Case (b)}
		$A=\mu I$, $B=LI$, and $T=\left(1-\theta\tfrac{L+\mu}{(L+1)(\mu+1)}\right)$.
		This construction provides the lower bound $\rho=\lvert 1-\theta\tfrac{L+\mu}{(\mu+1)(L+1)} \rvert$, and it is valid when $(\mu,L,\theta)\in R_\mathrm{(b)}$. (In fact, it is always valid.)

        \paragraph{Case (c)}
        Define
        \[K=\tfrac{L^2+1}{2 L}\,\tfrac{(\mu -1) \left(L^2 (\theta -2 (\mu +1))-(2 (\theta -1) \mu +\theta -2)\right)^2-4 (\theta -2)^2 (\mu +1) L^2}{ 4 \mu ^2 \left(\theta +L^2-1\right) \left((1-\theta ) (L^2-\mu) +L^2\mu-1\right)+(\theta -2)^2 (\mu +1) \left(L^2+1\right)^2}.\]
It is possible to verify that there is no division by $0$ in the definition of $K$ and that $0\le K\le 1$ when $(\mu,L,\theta)\in R_\mathrm{(c)}$.
 Let
 \[A=\mu\, \mathrm{Id}+N_{\mathbb{R}\times \{0\}},\quad 
B=L\begin{pmatrix} 
 K  & -\sqrt{1-K^2}  \\
 \sqrt{1-K^2}  & K  \\
\end{pmatrix}.
\]
Since the eigenvalues of $B^{T}B$ and $\tfrac12(B^T+B)$ are respectively both equal to $L^2$ and $KL$, we have $B\in \mathcal{M}\cap \mathcal{L}_L$.
Then
\[ J_A=\tfrac{1}{\mu +1}\begin{bmatrix}
		1 & 0 \\
		0 & 0 \\
		\end{bmatrix},\quad J_B=\begin{pmatrix}
		 \tfrac{K L+1}{L^2+2 K L+1} & \tfrac{\sqrt{1-K^2} L}{L^2+2 K L+1} \\
 -\tfrac{\sqrt{1-K^2} L}{L^2+2 K L+1} & \tfrac{K L+1}{L^2+2 K L+1} 
		\end{pmatrix},
\]
and 
  \[T=\begin{pmatrix}1-\tfrac{K L \theta +\theta }{L^2+2 K L+1} & -\tfrac{\sqrt{1-K^2} L \theta }{L^2+2 K L+1} \\
 \tfrac{\sqrt{1-K^2} L \theta  (\mu -1)}{\left(L^2+2 K L+1\right) (\mu +1)} & \tfrac{(-\theta +\mu +1) L^2-K (\theta -2) (\mu +1) L-\theta  \mu +\mu +1}{\left(L^2+2 K L+1\right) (\mu +1)} \end{pmatrix}.\]
The eigenvalues of $T^TT$ are
\begin{equation*}
    \begin{aligned}
    \tfrac{T_1 \pm \theta  \sqrt{T_2}}{2 (\mu +1)^2 \left(2 K L+L^2+1\right)}
    \end{aligned}
\end{equation*}
with
\begin{equation*}
    \begin{aligned}
    T_1&=\theta ^2 \left(2 K \mu  L+2 \mu  (\mu +1)+L^2+1\right)-2 \theta  (\mu +1) \left(2 K (\mu +1) L+2 \mu +L^2+1\right)+2 (\mu +1)^2 \left(2 K L+L^2+1\right),\\
    T_2&=\left(-2 K L+L^2+1\right) \left((2 (\theta -1) \mu +\theta -2)^2+2 K L (\theta -2 (\mu +1)) (2 (\theta -1) \mu +\theta -2)+L^2 (\theta -2 \mu -2)^2\right).
    \end{aligned}
\end{equation*}
This construction provides the lower bound
\[
\rho^2=    \tfrac{T_1 + \theta  \sqrt{T_2}}{2 (\mu +1)^2 \left(2 K L+L^2+1\right)}.
\]
Under the assumption 
\begin{equation*}
\begin{aligned}
&\tfrac{\mu  \left(-2 (\theta -1) \mu -\theta +L^2 (\theta -2 (\mu +1))+2\right)}{\sqrt{(2 (\theta -1) \mu +\theta -2)^2+L^2 (\theta -2 (\mu +1))^2}}>\sqrt{L^2+1},
\end{aligned}
\end{equation*}
(i.e., when $(\mu,L,\theta)\notin R_\mathrm{(a)}$), the lower bound simplifies to
\[
\rho=\sqrt{
\tfrac{(2-\theta)}{4 \mu  \left(L^2+1\right)}\, \tfrac{
	\left(
	\theta(L^2+1)-2\mu(\theta+L^2-1)
	\right)
	\left(\theta  \left(1+2 \mu +L^2\right)-2 (\mu +1) \left(L^2+1\right)\right)}{2 \mu  \left(\theta +L^2-1\right)-(2-\theta ) \left(1-L^2\right)}},
\]
and it is valid when $(\mu,L,\theta)\in R_\mathrm{(c)}$.

\section{Algebraic verification of inequalities}
\label{ss:verification}
We now provide the algebraic verifications of the inequalities asserted in Section~\ref{s:main-proof}.
The proofs of this section are based on elementary arguments and arduous algebra.
To help readers follow and verify the basic but tedious computation, we provide code that uses symbolic manipulation to verify the steps.

\subsection{Inequalities for Theorem~\ref{thm:DRS_coco_strmonotone}}
\label{ss:thm3_ineq}
\subsubsection{Upper bounds}
\label{ss:theorem3_verification}
It remains to show that there is no division by $0$ and $\lambda^A_\mu,\lambda_\beta^B,K_1,K_2\geq 0$ in each case.

\paragraph{Case (a)}
		Assume $0<\mu$, $0<\beta$, $0<\theta<2$, and
 		\begin{align}
 		&\mu-\mu\beta-\beta > 0 \tag{a1}\label{eq:casea_conda}\\
 		&\theta \leq 2\tfrac{(\beta+1) (\mu -\mu\beta-\beta)}{\mu+ \mu \beta - \beta-\beta^2 -2 \mu \beta^2   } 
 		\quad \text{(no division by $0$ implied, see (ii) below)}
 		\tag{a2}\label{eq:casea_condb}
 		\end{align}
 		Then:
		\begin{itemize}
			\item[(i)] From~\eqref{eq:casea_conda} it is direct to note that $1-\beta>\beta{\mu}^{-1}>0$ and hence also $\beta<1$.
			\item[(ii)] As the numerator of~\eqref{eq:casea_condb} is positive (from $\beta>0$ and~\eqref{eq:casea_conda}) and as $\theta>0$, the denominator of~\eqref{eq:casea_condb} is positive, i.e., $\mu  + \mu \beta - \beta-\beta^2 -2 \mu \beta^2 \geq 0$.  To prove strict positivity, assume for contradiction that 
			$\mu  + \mu \beta - \beta-\beta^2 -2 \mu \beta^2=0.$ This implies 
			\[\beta  (\beta +2 (\beta-1) \mu )=\mu-\mu \beta-\beta\overset{\text{\eqref{eq:casea_conda}}}{>}0,
			\]
			and hence \[0< \beta +2 (\beta-1) \mu\overset{\text{\eqref{eq:casea_conda}}}{<}\mu-\mu\beta+2(\beta-1)\mu=\mu(\beta-1)\overset{\text{(i)}}{<}0,\]
			which is a contradiction. Therefore, we conclude 
			\[\mu  + \mu \beta - \beta-\beta^2 -2 \mu \beta^2 > 0.\]
			\item[(iii)] Multiply both sides of \eqref{eq:casea_condb} by $\beta$, reorganize, and use \eqref{eq:casea_conda} to get 
			\[
			1+\beta-\theta \beta \overset{\text{\eqref{eq:casea_condb}}}{\geq} \tfrac{\left(1-\beta^2\right) (\mu -\beta )}{\mu  + \mu \beta - \beta-\beta^2-2 \mu\beta^2} \overset{\text{\eqref{eq:casea_conda}}}{>} \tfrac{\mu\beta \left(1-\beta^2\right) }{\mu  + \mu \beta - \beta-\beta^2-2 \mu\beta^2} > 0.
			\]
			\item[(iv)] Multiply both sides of \eqref{eq:casea_condb} by the denominator of \eqref{eq:casea_condb} (which is positive by (ii))
			and reorganize to get 
			\[
			(\beta -1) \mu  (2 \beta  (\theta -1)+\theta -2)-(2-\theta )\beta  (\beta+1)\ge 0.
			\]
			\item[(v)]  Multiply both sides of \eqref{eq:casea_condb} by $\beta/(1+\beta)$ and reorganize to get
			\[
			1-\theta\tfrac{\beta}{\beta+1}\ge \tfrac{(1-\beta) (\mu-\beta)}{ \mu +\mu\beta   -\beta- \beta ^2-2 \mu \beta ^2 }>0.
			\]
		\end{itemize} 
		
		(v) shows $\lambda^A_\mu$ and $\lambda_\beta^B$ are nonnegative.
		(i) shows the denominator of $K_1$ is positive.
		(iii) shows the numerator of $K_1$ is nonnegative.
		(i) and (iii) show the denominator of $K_2$ is positive.
		(iii) and (iv) show the numerator of $K_2$ is nonnegative.
		(iii) shows the denominator of $m_2$ and $m_3$ are positive.

\paragraph{Case (b)}
		Assume $0<\mu$, $0<\beta$, $0<\theta<2$, and
		\begin{align}
		&\mu\beta-\mu -\beta >0 \tag{b1}\label{eq:caseb_conda}\\
		& \theta \leq \tfrac{2 (\mu +1) \left(\mu+\beta+\beta^2 -\mu \beta^2\right)}{\mu ^2 +\beta^2+\mu^2\beta+\mu\beta^2+\mu  +\beta-2\mu^2\beta^2}
 		\quad \text{(no division by $0$ implied, see (ii) below))}
\tag{b2}\label{eq:caseb_condb}
		\end{align}
        Then:
    	\begin{itemize}
    	\item[(i)] From \eqref{eq:caseb_conda}, we have $\mu\beta>\mu+\beta$. Therefore $\mu>1+\mu/\beta>1$ and $\beta>1+\beta/\mu>1$.
        \item[(ii)] The numerator of \eqref{eq:caseb_condb} is negative since $ \mu+\beta+\beta^2-\mu \beta^2=(1+\beta)(\mu -\mu\beta+\beta) \overset{\text{\eqref{eq:caseb_conda}}}{<} 0$.
        Since $\theta>0$, the denominator of \eqref{eq:caseb_condb} is nonpositive. To prove strict negativity, we view the denominator of~\eqref{eq:caseb_condb} as a quadratic function of $\mu$:
        \begin{align*}
        \phi_\beta(\mu)
        &=\mu ^2 +\beta^2+\mu^2\beta+\mu\beta^2+\mu  +\beta-2\mu^2\beta^2\\
        &=
        \underbrace{(1+\beta-2\beta^2)}_{<0\text{ by (i)}}\mu^2+(1+\beta^2)\mu+\beta+\beta^2.
        \end{align*}
        This quadratic is nonnegative only between its roots and
        \[
        \phi_\beta(0)=\beta+\beta^2>0,\quad\phi_\beta(\beta/(\beta-1))=-\tfrac{\beta^2}{\beta-1}<0.
        \]
        Therefore
        $\phi_\beta(\mu)<0$ for any $\mu>\beta/(\beta-1)$, which holds by~\eqref{eq:caseb_conda}.
        Therefore we conclude denominator of \eqref{eq:caseb_condb} is strictly negative.
        \item[(iii)] Multiply both sides of \eqref{eq:caseb_condb} by $(1+\beta+2\mu\beta)$ and reorganize to get
        	\[
        	(2-\theta) (\beta+1)+2 \mu  (1+\beta-\theta\beta)\ge -\tfrac{2 \mu ^2 (\mu +1) (\beta^2-1)}{\mu^2+\beta^2+\mu ^2\beta+\mu\beta^2 +\mu  +\beta-2 \mu ^2\beta^2}>0.
        	\]
        	The latter inequality follows from (i) and (ii).
			\item[(iv)]
			Multiply both sides of \eqref{eq:caseb_condb} by $(1+\mu\beta)$ and reorganize to get
			\[
			(\beta+1)(\mu+1)-\theta(1+\mu\beta) \ge -\tfrac{(\mu ^2-1) (\beta^2-1) (\mu  +\beta)}{\mu ^2+\beta^2+\mu^2\beta+\beta^2\mu +\mu +\beta-2\mu^2\beta}>0.
			\]
        	The latter inequality follows from (i) and (ii).
			\item[(v)]
			Multiply both sides of \eqref{eq:caseb_condb} by
			$-(\mu ^2 +\beta^2+\mu^2\beta+\mu\beta^2+\mu  +\beta-2\mu^2\beta^2)$
			(which is positive by (ii))
			and reorganize to get
			\[
			\mu \beta^2 (-2 \theta  \mu +\theta +2 \mu )+\theta\beta^2 +(\theta -2) \mu  (\mu +1) +\beta \left(\theta  \mu ^2+\theta -2 \mu -2\right)-2\beta^2 \ge 0.
			\]
		\end{itemize}
		
		        (i) and (iv) show $\lambda^A_\mu$ and $\lambda_\beta^B$ are nonnegative. 
				(i) shows the denominator of $K_1$ is positive.
				(iii) shows the numerator of $K_1$ is nonnegative.
				(i) and (iii) show the denominator of $K_2$ is positive.
				(iv) and (v) show the numerator of $K_2$ is nonnegative.
				(iii) shows the denominators of $m_2$ and $m_3$ are positive.

\paragraph{Case (c)}
		Assume $0<\mu$, $0<\beta$, $0<\theta<2$, and
		\begin{align}
		&\theta \geq 2 \tfrac{\mu  +\beta+\mu\beta}{\mu +\beta+ 2\mu\beta} \tag{c1}\label{eq:casec_conda}
		\end{align}
Then:
        \begin{itemize}
			\item[(i)] 
			Multiply both sides of \eqref{eq:casec_conda} by $(1+2\mu)$ and reorganize to get
			\[
			2 (\theta -1) \mu +\theta -2 \ge \tfrac{2 \mu ^2}{\mu +\beta+2\mu\beta} >0.
			\]
			\item[(ii)] Reorganize \eqref{eq:casec_conda} to get
			\[
			\theta \ge 2 \tfrac{\mu +\beta+\mu\beta}{\mu+\beta+2\mu\beta}=2 \tfrac{1}{1+\tfrac{\mu\beta}{\mu +\beta+\mu\beta}}>1.
			\]
			\item[(iii)] Multiply both sides of \eqref{eq:casec_conda} by $(\mu+\beta+2\mu\beta)$ to get
			$\theta\beta +\theta  \mu  (1+2\beta)-2 (\mu+\beta+\mu\beta) \ge 0$.
		\end{itemize}

		(ii) shows $\lambda^A_\mu$ and $\lambda_\beta^B$ are nonnegative.
		(i) shows $K_1$ is nonnegative.
		(i) shows the denominator of $K_2$ is positive.
		(ii) and (iii) show the numerator of $K_2$ is nonnegative.
		(i) shows the denominators of $m_2$ and $m_3$ are positive.

\paragraph{Case (d)}
		Assume $0<\mu$, $0<\beta$, $0<\theta<2$, and
		\begin{align}
		&\mu +\mu\beta-\beta<0 \tag{d1}\label{eq:cased_conda}\\
		& \theta \leq \tfrac{2 (\mu +1) (\beta-\mu-\mu\beta)}{\beta+\beta  \mu -\mu -\mu ^2-2 \beta  \mu ^2}
 		\quad \text{(no division by $0$ implied, see (ii) below))}
    \tag{d2}\label{eq:cased_condb}
		\end{align}
Then:
\begin{itemize}
			\item[(i)] From~\eqref{eq:cased_conda} it is direct to note that $\mu < \tfrac{\beta}{\beta+1}<1$
			and $ \beta > \tfrac{\mu}{1-\mu}>\mu$.
			\item[(ii)] As the numerator of \eqref{eq:cased_condb} is positive (from $\mu>0$ and \eqref{eq:cased_conda}) and as $\theta>0$,
			the denominator of \eqref{eq:cased_condb} is nonnegative.
			To prove strict positivity of the of the denominator, 
			assume for contradiction that $\beta+\beta  \mu -\mu -\mu ^2-2 \beta  \mu ^2=0$.
            This implies that 
            \[
            -2 \beta  \mu ^2+2 \beta  \mu -\mu ^2=\beta  \mu -\beta +\mu\overset{\eqref{eq:cased_conda}}{<}0,
            \]
            and hence \[0>2 \beta (1-\mu)  -\mu  \overset{\text{(i)}}{>}\mu>0,\]
			which is a contradiction. Therefore we conclude
			\[\beta+\beta  \mu -\mu -\mu ^2-2 \beta  \mu ^2 > 0.\]
			\item[(iii)]
			Multiply both sides of \eqref{eq:cased_condb} by $(1+2\mu)$ and reorganize to get
			\[
			2 (\theta -1) \mu +\theta -2 \le -\tfrac{2\mu^2(\mu +1)}{\beta+\beta  \mu -\mu -\mu ^2-2 \beta  \mu ^2}<0.
			\]
			\item[(iv)]
			Multiply both sides of \eqref{eq:cased_condb} by $\mu$ and reorganize to get
			\[
			(\theta -1) \mu -1 \le -\tfrac{(1-\mu^2)(\beta-\mu)}{\beta+\beta  \mu -\mu -\mu ^2-2 \beta  \mu ^2}<0,
			\]
			where the latter inequality follows from (i) and (ii).

			\item[(v)]
			Multiply both sides of \eqref{eq:cased_condb} by the denominator of \eqref{eq:cased_condb} (which is positive by (ii)) and reorganize to get
         \[
         -\theta  (\beta-\mu^2 (1+2\beta) -\mu(1-\beta))-2 (\mu +1) (\mu\beta +\mu -\beta) \ge 0.
         \]
         \item[(vi)]
         Multiply both sides of \eqref{eq:cased_condb} by $\mu/(1+\mu)$ and reorganize to get
         		\[1-\theta\tfrac{\mu}{\mu+1}\ge \tfrac{(1-\mu) (\beta -\mu )}{\beta+\beta  \mu -\mu -\mu ^2-2 \beta  \mu ^2}>0, \]
			where the latter inequality follows from (i) and (ii).
		\end{itemize}
		
		(vi) shows $\lambda^A_\mu$ is nonnegative.
		(i) and (vi) show $\lambda_\beta^B$ is nonnegative.
		(iii) shows $K_1$ is nonnegative.
		(iv) and (v) show the numerator of $K_2$ is nonpositive.
		(iii) shows the denominator of $K_2$ is negative.
		(iii) shows the denominators of $m_2$ and $m_3$ are negative.

\paragraph{Case (e)}
Assume $0<\mu$, $0<\beta$, and $0<\theta<2$.
First, we show that if $(\mu,\beta,\theta)\in R_\mathrm{(e)}$, then
		\begin{align}
		&\theta <\tfrac{2 (\mu +1) (1+\beta)}{2 \mu\beta +\mu +\beta+1} \tag{e1}\label{eq:casee_conda}
		\end{align}
If \eqref{eq:casee_conda} does not hold, i.e.,
\[\theta \geq 2\tfrac{ \mu + \beta +\mu \beta +1}{\mu + \beta +2 \mu \beta+1},\]
then
\begin{equation*}
	\begin{aligned}
	\theta (\mu   +\beta+ 2\mu\beta) - 2(\mu +\beta +\mu\beta )&\geq  2\tfrac{ \mu  + \beta +\mu\beta +1}{\mu  +\beta+2 \mu\beta +1} (\mu +\beta + 2\mu\beta ) - 2(\mu  +\beta +\mu\beta)\\&=\tfrac{2 \mu\beta }{\mu  +\beta+2 \mu \beta+1}> 0,
	\end{aligned}
	\end{equation*}
which implies \eqref{eq:casec_conda}.
So
\[
(\mu,\beta,\theta)\in R_\mathrm{(e)}
\quad\Rightarrow\quad
(\mu,\beta,\theta)\notin R_\mathrm{(c)}
\quad\Rightarrow\quad
\text{\eqref{eq:casee_conda}}.
\]

		\begin{itemize}
			\item[(i)]
			We have
			\[
			(2-\theta)\mu(\beta+1)+\theta\beta(1-\mu)=\theta(\beta-(2\beta+1)\mu)+2(\beta+1)\mu> 0,
			\]
			because either $\beta-(2\beta+1)\mu\geq 0$ and the inequality immediately follows or
			$\beta-(2\beta+1)\mu< 0$ and we use \eqref{eq:casee_conda} to get
			\begin{align*}
			\theta(\beta-(2\beta+1)\mu)+2(\beta+1)\mu&>
			\tfrac{2 (\mu +1) (1+\beta)}{2 \mu\beta +\mu +\beta+1}(\beta-(2\beta+1)\mu)
			+2(\beta+1)\mu\\
			&=
			\tfrac{2\beta (\beta+1)}{\mu  +\beta+2 \mu \beta+1}> 0.
			\end{align*}
			\item[(ii)] We have $2 \mu \beta (1-\theta) +(2-\theta) (\mu +\beta+1 )>0$, because
			\begin{align*}
			2 \mu \beta (1-\theta) +(2-\theta) (\mu +\beta+1 )&=
			-\theta(\mu+\beta+2 \mu\beta+1)+2 (\mu +1) (\beta+1)\\
			&>-\tfrac{2 (\mu +1) (\beta+1)}{\mu  +\beta+2 \mu\beta +1}(\mu +\beta+2 \mu\beta+1)+2 (\mu +1) (\beta+1)=0,
			\end{align*}
			where the inequality follows from plugging in \eqref{eq:casee_conda}.
			\item[(iii)] We have
			\[
			(2-\theta)(1+\mu)\beta-{\theta}\mu(\beta-1)=2 (\mu +1)\beta+\theta  (\mu  (1-2\beta)-\beta)> 0,
			\]
			because either $\mu  (1-2\beta)-\beta\geq 0$ and the inequality immediately follows or
			$\mu  (1-2\beta)-\beta<0$ and we use
			\begin{align*}
			2 (\mu +1)\beta+\theta  (\mu  (1-2\beta)-\beta) 
			&>
			2 (\mu +1)\beta+\tfrac{2 (\mu +1) (1+\beta)}{2 \mu\beta +\mu +\beta+1}  (\mu  (1-2\beta)-\beta)\\
			&=
			\tfrac{2 \mu  (\mu +1)}{2 \mu\beta +\mu +\beta+1}>0.
			\end{align*}
		\end{itemize}
		
		(ii) shows that the denominator of $\rho^2$ is positive.
		(i) and (iii) show that the numerator of $\rho^2$ is nonnegative.
		(i) shows that $\lambda^A_\mu$ is nonnegative.
		(i) shows that the numerator of $\lambda_\beta^B$ is nonnegative.
		(ii) shows that the denominator of $\lambda_\beta^B$ is positive.
		(ii) shows that the denominator of $K_1$ is positive.
		Hence we arrive to $\rho,\lambda^A_\mu,\lambda_\beta^B,K_1,K_2\geq 0$ in the region of interest.

\subsubsection{Lower bounds}
\label{sss:lower3}
It remains to show that $(\mu,\beta,\theta)\in R_{\mathrm{(e)}}\Rightarrow 0<K< 1/\beta^2$. In what follows, we first show $0<K$ in part I, and we then show $K< 1/\beta^2$ in part II.

Before we proceed, let us introduce the symbol $\neg$ that denotes the logical negation, and point out the following elementary fact.
If $f(\theta)=a\theta+b$ is an affine function of $\theta$, then either
\[
f(\theta)<\max\{f(\theta_\mathrm{min}),f(\theta_\mathrm{max})\}
\quad\text{ for all }\theta\in (\theta_\mathrm{min},\theta_\mathrm{max})
\]
or
\[
f(\theta)=
f(\theta_\mathrm{min})=f(\theta_\mathrm{max}).
\]
The two cases respectively correspond to $a\ne0$ and $a=0$.


\paragraph{Part I}
We now prove $0<K$.
Define
\begin{equation*}
\begin{aligned}
a_1&=-\theta(2\beta\mu+\beta+\mu)+2(\beta\mu+\beta+\mu)\\
a_2&=\theta  \left(-(2 \beta +1) \mu ^2+(\beta -1) \mu +\beta\right)+2 (\mu +1) (\beta  (\mu -1)+\mu )\\
\end{aligned}
\end{equation*}
and
\begin{equation*}
    \begin{aligned}
    K_\mathrm{den}=&\theta ^2 \left(8 \beta ^3 \mu ^3+2 \beta ^3 \mu +2 \beta ^3+4 \beta ^2 \mu ^3+4 \beta ^2 \mu ^2-\beta ^2 \mu +\beta ^2-2 \beta  \mu ^3-\mu ^3-\mu ^2\right)\\&-4 \theta  \left(4 \beta ^3 \mu ^3+2 \beta ^3 \mu ^2+2 \beta ^3 \mu +2 \beta ^3+4 \beta ^2 \mu ^3+3 \beta ^2 \mu ^2+\beta ^2-\beta  \mu ^3-\mu ^3-\mu ^2\right)\\&+4 (\mu +1) \left(2 \beta ^3 \mu ^2+2 \beta ^3+3 \beta ^2 \mu ^2+\beta ^2-\mu ^2\right)
    \end{aligned}
\end{equation*}
so that 
\[
K=\frac{a_1a_2}{K_\mathrm{den}}.
\]

We have $a_1> 0$ since $(\mu,\beta,\theta)\notin R_\mathrm{(c)}$, i.e., $\theta<2\tfrac{\beta  \mu +\beta +\mu }{2 \beta  \mu +\beta +\mu }$.

We now show $a_2>0$. Since $(\mu,\beta,\theta)\notin R_\mathrm{(d)}$, we have 2 cases:
\begin{enumerate}
    \item[(i)] Assume $\neg\text{\eqref{eq:cased_conda}}$. Since $a_2$ is an affine function of $\theta$ and $\theta\in\left(0,2\tfrac{\beta  \mu +\beta +\mu}{2 \beta  \mu +\beta +\mu }\right)$, either
    \[
    a_2>\min\left\{
    a_2\big|_{\theta=0},a_2\big|_{\theta=2\tfrac{\beta  \mu +\beta +\mu}{2 \beta  \mu +\beta +\mu }}
    \right\}=
    \min\left\{2 (\mu +1) (\beta \mu -\beta +\mu ),
    \tfrac{4 \beta  \mu ^2}{2 \beta  \mu +\beta +\mu }
    \right\}\geq 0,
    \]
    where the first term is nonnegative by $\neg\text{\eqref{eq:cased_conda}}$, or 
    \[
    a_2=2 (\mu +1) (\beta \mu -\beta +\mu )=\tfrac{4 \beta  \mu ^2}{2 \beta  \mu +\beta +\mu } > 0.
    \]
    In both cases, $a_2>0$.
    \item[(ii)] Assume $\text{\eqref{eq:cased_conda}}$ and $\neg\text{\eqref{eq:cased_condb}}$.
    In Section~\ref{ss:theorem3_verification} case (d) part (ii), we proved that $\text{\eqref{eq:cased_conda}}$ implies 
    $-2 \beta  \mu ^2+\beta  \mu +\beta -\mu ^2-\mu >0$.
    Since $a_2$ is an affine function of $\theta$ and $\theta\in\left(\tfrac{2 (\mu +1) (\beta  (-\mu )+\beta -\mu )}{-2 \beta  \mu ^2+\beta  \mu +\beta -\mu ^2-\mu },2\tfrac{\beta  \mu +\beta +\mu}{2 \beta  \mu +\beta +\mu }\right)$, either
    \[
    a_2>\min\left\{
    a_2\big|_{\theta=\tfrac{2 (\mu +1) (\beta  (-\mu )+\beta -\mu )}{-2 \beta  \mu ^2+\beta  \mu +\beta -\mu ^2-\mu }},a_2\big|_{\theta=2\tfrac{\beta  \mu +\beta +\mu}{2 \beta  \mu +\beta +\mu }}
    \right\}=
    \min\left\{0,
    \tfrac{4 \beta  \mu ^2}{2 \beta  \mu +\beta +\mu }
    \right\}=0,
    \]
    or 
    \[
    a_2=0=\tfrac{4 \beta  \mu ^2}{2 \beta  \mu +\beta +\mu }.
    \]
    The latter case is impossible and we conclude $a_2>0$.
\end{enumerate}

We now show $K_\mathrm{den}>0$.
We will use the following elementary fact.
Let $f(\theta)=a\theta^2+b\theta+c$ be a quadratic function.
If $a\le 0$, $f(\theta_\mathrm{min})>0$, and $f(\theta_\mathrm{max})>0$, then $f(\theta)>0$ for all $\theta\in (\theta_\mathrm{min},\theta_\mathrm{max})$.
If $a>0$, 
$f(\theta_\mathrm{min})>0$, $f(\theta_\mathrm{max})>0$, and $f'(\theta_\mathrm{max})<0$, then $f(\theta)>0$ for all $\theta\in (\theta_\mathrm{min},\theta_\mathrm{max})$.
Consider the 2 cases:
\begin{itemize}
    \item[(i)] Assume $\neg\text{\eqref{eq:casea_conda}}$.
    Under this case, $\theta\in\left(0,2\tfrac{\beta  \mu +\beta +\mu}{2 \beta  \mu +\beta +\mu }\right)$.
    We view $K_\mathrm{den}$ as a quadratic function of $\theta$.
First, note
\[
K_\mathrm{den}\big|_{\theta=0}=4(\mu+1)((2 \beta +1) \beta ^2+(\beta +1)^2 (2 \beta -1) \mu ^2)
\]
and define  $\phi_\mu(\beta)=(2 \beta +1) \beta ^2+(\beta +1)^2 (2 \beta -1) \mu ^2$.
Since
        \begin{equation*}
            \begin{aligned}
            \phi_\mu\left(\tfrac{\mu }{\mu +1}\right)&=\tfrac{4 \mu ^5}{(\mu +1)^3}>0\\
            \tfrac{d\phi_\mu}{d\beta}(\beta)&=2 \beta  \left(3 (\beta +1) \mu ^2+3 \beta +1\right)>0
            \end{aligned}
        \end{equation*}
        and $\beta\geq \tfrac{\mu}{\mu+1}$ by $\neg\text{\eqref{eq:casea_conda}}$, we have $\phi_\mu(\beta)>0$ and $K_\mathrm{den}\big|_{\theta=0}>0$.
We have
\[
K_\mathrm{den}\big|_{\theta=2\tfrac{\beta  \mu +\beta +\mu}{2 \beta  \mu +\beta +\mu }}=\frac{16 \beta ^3 \mu ^2 (\beta +\mu )}{(2 \beta  \mu +\beta +\mu )^2}>0.
\]
Finally, note
\[
\tfrac{dK_{\mathrm{den}}}{d\theta}\big|_{\theta=2\tfrac{\beta  \mu +\beta +\mu}{2 \beta  \mu +\beta +\mu }}=-\frac{4 \beta  \mu  \left(4 \beta ^3 \mu +2 \beta ^3+4 \beta ^2 \mu ^2+2 \beta ^2+\beta  \mu -\mu ^2\right)}{2 \beta  \mu +\beta +\mu }
\]
and define
$\phi_\mu(\beta)= \left(4 \beta ^3 \mu +2 \beta ^3+4 \beta ^2 \mu ^2+2 \beta ^2+\beta  \mu -\mu ^2\right)$.
Since 
        \begin{equation*}
        \begin{aligned}
            \phi_\mu\left(\tfrac{\mu }{\mu +1}\right)&=\tfrac{\mu ^2 \left(3 \mu  (\mu +1)^2+2\right)}{(\mu +1)^3}>0\\
            \tfrac{d\phi_\mu}{d\beta}(\beta)&=2 \beta  \left(\beta  (6 \mu +3)+4 \mu ^2+2\right)+\mu>0
        \end{aligned}
        \end{equation*}
        and $\beta\geq \tfrac{\mu}{\mu+1}$ by $\neg\text{\eqref{eq:casea_conda}}$, we have $\phi_\mu(\beta)>0$ and $
\tfrac{dK_{\mathrm{den}}}{d\theta}\big|_{\theta=2\tfrac{\beta  \mu +\beta +\mu}{2 \beta  \mu +\beta +\mu }}<0$.
    Therefore, the quadratic function satisfies $K_\mathrm{den}>0$ for $\theta\in\left(0,2\tfrac{\beta  \mu +\beta +\mu}{2 \beta  \mu +\beta +\mu }\right)$.

    \item[(ii)] Assume $\text{\eqref{eq:casea_conda}}$ and $\neg\text{\eqref{eq:casea_condb}}$.
    Under this case,
    $\theta\in\left(\tfrac{2 (\beta +1) (\mu-\mu\beta-\beta )}{\mu  + \mu \beta - \beta-\beta^2 -2 \mu \beta^2 },2\tfrac{\beta  \mu +\beta +\mu}{2 \beta  \mu +\beta +\mu }\right)$, and, in Section~\ref{ss:theorem3_verification} case (a) part (ii), we proved that $\text{\eqref{eq:casea_conda}}$ implies $\mu  + \mu \beta - \beta-\beta^2 -2 \mu \beta^2 > 0$.
    
    We first show $\mu>\beta$.
    The fact that $\theta$ is in the interval implies
    \begin{align*}
            \tfrac{2 (\beta +1) (\mu-\mu\beta-\beta  )}{\mu  + \mu \beta - \beta-\beta^2 -2 \mu \beta^2 }&< 2\tfrac{\beta  \mu +\beta +\mu}{2 \beta  \mu +\beta +\mu }\\
            &\Leftrightarrow\quad
            \frac{4\beta^2\mu}{\beta ^2 (2 \mu +1)+\beta  (1-\mu )-\mu}=({2 \beta  \mu +\beta +\mu })\tfrac{2 (\beta +1) (\mu-\mu\beta-\beta  )}{\mu  + \mu \beta - \beta-\beta^2 -2 \mu \beta^2 }- 2({\beta  \mu +\beta +\mu})<0\\
             &\Leftrightarrow\quad \beta ^2 (2 \mu +1)+\beta  (1-\mu )-\mu< 0.
            \end{align*}
    Define 
    \[
    \phi_\mu(\beta)=\beta ^2 \underbrace{(2 \mu +1)}_{>0}+\beta  (1-\mu )-\mu.
    \]
    Since the coefficient of the quadratic term is positive, $\phi_\mu(0) =-\mu<0$, and $\phi_\mu(\mu)=2 \mu ^3>0$, 
    we conclude that $\phi_\mu(\beta)<0$ is only possible when $\beta<\mu$, i.e., $\phi_\mu(\beta)<0\Rightarrow \mu >\beta$.

    Now we view $K_\mathrm{den}$ as a quadratic function of $\theta$. We have
    \begin{align}
    & K_\mathrm{den}\big|_{\theta=\tfrac{2 (\beta +1) (\mu-\mu\beta-\beta )}{\mu  + \mu \beta - \beta-\beta^2 -2 \mu \beta^2 }}=\tfrac{16 \beta ^5 \mu ^2 (\mu -\beta )}{(\mu -\beta  (2 \beta  \mu +\beta -\mu +1))^2}>0\quad\text{(since $\mu>\beta$)}\nonumber\\
    & K_\mathrm{den}\big|_{\theta=2\tfrac{\beta  \mu +\beta +\mu}{2 \beta  \mu +\beta +\mu }}=\tfrac{16 \beta ^3 \mu ^2 (\beta +\mu )}{(2 \beta  \mu +\beta +\mu )^2}>0.\label{eq:kden_pos}
    \end{align}
    Define
        \[
        \phi(\beta,\mu)=8 \beta ^3 \mu ^3+2 \beta ^3 \mu +2 \beta ^3+4 \beta ^2 \mu ^3+4 \beta ^2 \mu ^2-\beta ^2 \mu +\beta ^2-2 \beta  \mu ^3-\mu ^3-\mu ^2,
        \]
    which is the coefficient for the quadratic term of $K_\mathrm{den}$.
    If $\phi(\beta,\mu)\le 0$ (i.e., the curvature $K_\mathrm{den}$ of is nonpositive), then the two inequalities \eqref{eq:kden_pos} implies $K_\mathrm{den}>0$ for  $\theta\in\left(\tfrac{2 (\beta +1) (\mu-\mu\beta-\beta )}{\mu  + \mu \beta - \beta-\beta^2 -2 \mu \beta^2 },2\tfrac{\beta  \mu +\beta +\mu}{2 \beta  \mu +\beta +\mu }\right)$.

Now assume $\phi(\beta,\mu)>0$.
In this case, we have
        \begin{align*}
            \tfrac{dK_\mathrm{den}}{d\theta}\big|_{\theta=2\tfrac{\beta  \mu +\beta +\mu}{2 \beta  \mu +\beta +\mu }} 
            &=-\tfrac{4 \beta  \mu  \left(\beta ^3 (4 \mu +2)+\beta ^2 \left(4 \mu ^2+2\right)+\beta  \mu -\mu ^2\right)}{2 \beta  \mu +\beta +\mu }\\
            &<-\tfrac{4 \beta  \mu }{2 \beta  \mu +\beta +\mu }
            \left(\beta ^3 4 \mu +\beta ^24 \mu ^2-\mu ^2\right)\\
            &=-\tfrac{4 \beta  \mu^2 }{2 \beta  \mu +\beta +\mu }
            \left(4\beta^2(\beta  + \mu)   -\mu \right)
        \end{align*}
        If $4 \beta ^2 (\beta +\mu )-\mu >0$, then $
            \tfrac{dK_\mathrm{den}}{d\theta}\big|_{\theta=2\tfrac{\beta  \mu +\beta +\mu}{2 \beta  \mu +\beta +\mu }} <0$,
            and we conclude that $K_\mathrm{den}>0$.

        We now show
        $         4 \beta ^2 (\beta +\mu )-\mu >0$.
        Assume for contradiction that $4 \beta ^2 (\beta +\mu )-\mu \leq0,$ or equivalently,        \begin{equation*}
         4 \beta ^2 (\beta +\mu )- \mu \le 0
         \quad\Leftrightarrow\quad
         4\beta^3\le \mu(1-4\beta^2),
         \label{eq:cont-assump}
        \end{equation*}
        which implies $\beta< 1/2$ and hence
        \begin{equation*}
         4 \beta ^2 (\beta +\mu )- \mu \le 0
         \quad\Leftrightarrow\quad
         4\beta^3\le \mu(1-4\beta^2) \quad\Leftrightarrow\quad \mu\ge \tfrac{4\beta^3}{1-4\beta^2},
        \end{equation*}
        and also
        \[
        \tfrac{d\phi}{d\mu}(\beta,\mu)=(2 \beta -1) \left(\beta ^2+4 \beta  \mu +3 (2 \beta  \mu +\mu )^2+2 \mu \right)<0.
        \]
        We have assumed for contradiction that
        $\mu\ge \tfrac{4\beta^3}{1-4\beta^2}$, and on the other hand, $\text{\eqref{eq:casea_conda}}$ and $\beta<1/2$ imply $\mu>\tfrac{\beta}{1-\beta}$.
        If \[
        \phi\left(\beta,\max\left\{
        \tfrac{4\beta^3}{1-4\beta^2},\tfrac{\beta}{1-\beta}
        \right\}\right)<0,
        \] then $\phi(\beta,\mu)<0$, the contradiction forces us to conclude $4 \beta ^2 (\beta +\mu )-\mu >0$.
        \begin{itemize}
            \item[(A)] If
         $\frac{4 \beta ^3}{1-4 \beta ^2}\geq\frac{\beta }{1-\beta }\,\Leftrightarrow\,-4 \beta ^3+8 \beta ^2-1\geq 0$, then 
         \[\phi\left(\beta,\tfrac{4 \beta ^3}{1-4 \beta ^2}\right)=-\tfrac{\beta ^2 \left(64 \beta ^7-16 \beta ^5-16 \beta ^4+4 \beta ^3+8 \beta ^2-1\right)}{(2 \beta -1)^2 (2 \beta +1)}<0,\]
         since
        \begin{align*}
        64 \beta ^7-16 \beta ^5-16 \beta ^4+4 \beta ^3+8 \beta ^2-1&\geq64 \beta ^7-16 \beta ^5-16 \beta ^4+8 \beta ^3\quad \text{(since $-4 \beta ^3+8 \beta ^2-1\geq 0$)}\\&=8 \beta ^3 (2 \beta -1)^2 \left(2 \beta ^2+2 \beta +1\right)\\
        &>0\quad \text{(using $\beta<\tfrac12$).}
        \end{align*}
            \item[(B)] If $\frac{4 \beta ^3}{1-4 \beta ^2}<\frac{\beta }{1-\beta }\,\Leftrightarrow\,-4 \beta ^3+8 \beta ^2-1< 0$),
            then
                    \[\phi\left(\beta,\tfrac{\beta }{1-\beta }\right)=\tfrac{\beta ^3 \left(8 \beta ^3+3 \beta -2\right)}{(1-\beta )^3}<0,\]
        since
        \[
        -4 \beta ^3+8 \beta ^2-1< 0 \text{ and }\beta\in (0,1/2)
        \quad\Rightarrow\quad
        8 \beta ^3+3 \beta -2< 0.
        \]
        
        This final point can be verified with simple plotting or with the following algebraic argument.
        Define $\chi(\beta)=-4 \beta ^3+8 \beta ^2-1$ 
        and $\psi(\beta)=8 \beta ^3+3 \beta -2$.
        Since $\chi'(\beta)=4 (4-3 \beta ) \beta$ and $\psi'(\beta)=3 \left(8 \beta ^2+1\right)$, 
        the functions $\chi$ and $\psi$ are increasing on $\beta\in(0,1/2)$.
        We have $\chi(0)<0$ and $\psi(0)<0$.
        Furthermore, 
                \[\chi\left(\tfrac{\sqrt[3]{2} \left(3 \sqrt{2}+4\right)^{2/3}-2^{2/3}}{4 \sqrt[3]{3 \sqrt{2}+4}}\right)=\tfrac{1}{4} \left(\tfrac{\sqrt[3]{2} \left(48 \sqrt{2}+67\right)}{\left(3 \sqrt{2}+4\right)^{5/3}}+\tfrac{\sqrt[6]{2} \left(62 \sqrt{2}+87\right)}{\left(3 \sqrt{2}+4\right)^{4/3}}-16\right)\approx 0.207156>0\]
        and
                \[ \psi\left(\tfrac{\sqrt[3]{2} \left(3 \sqrt{2}+4\right)^{2/3}-2^{2/3}}{4 \sqrt[3]{3 \sqrt{2}+4}}\right)=0.\]
So $\chi$ and $\psi $ are increasing functions and $\chi$ hits its first positive root before $\psi$.
Therefore  $\chi(\beta)<0\,\Rightarrow\,\psi(\beta)<0$ on $\beta\in(0,\tfrac12)$.

    \end{itemize}

\end{itemize}

{\paragraph{Part II}
We now prove $K<1/\beta^2$.}
Define
\begin{equation*}
\begin{aligned}
a_3&=\theta  \left(\beta ^2 \left(2 \mu ^2-\mu -1\right)-\beta  \left(\mu ^2+1\right)-\mu  (\mu +1)\right)+2 (\beta +1) (\mu +1) (  \beta +\mu-\mu\beta ),\\
a_4&=\theta  \left(\beta ^2 (2 \mu +1)-\beta  \mu +\beta -\mu \right)-2 (\beta +1) (\beta  \mu +\beta -\mu ).
\end{aligned}
\end{equation*}
Then $\beta^2\left(K-1/\beta^2\right)K_\mathrm{den}=a_3a_4$,
where we know $K_\mathrm{den}>0$ from part I.
Now verifying $K<\tfrac1{\beta^2}$  is equivalent to verifying $a_3 a_4< 0$.


Given $0<\mu$, $0<\beta$, and $0<\theta<2$, we have
\[
(\mu,\beta,\theta)\in R_{\mathrm{(a)}}
\quad\Leftrightarrow\quad
\text{\eqref{eq:casea_conda}}\text{ and }\text{\eqref{eq:casea_condb}}
\]
and
\[
(\mu,\beta,\theta)\in R_{\mathrm{(b)}}
\quad\Leftrightarrow\quad
\text{\eqref{eq:caseb_conda}}\text{ and }\text{\eqref{eq:caseb_condb}},
\]
where \eqref{eq:casea_conda}, \eqref{eq:casea_condb}, \eqref{eq:caseb_conda}, and \eqref{eq:caseb_condb} are defined as in Section~\ref{ss:theorem3_verification}.
We show
\[
(\mu,\beta,\theta)\in R_{\mathrm{(e)}}
\quad\Rightarrow\quad
(\mu,\beta,\theta)\notin R_{\mathrm{(a)}}
\text{ and }
(\mu,\beta,\theta)\notin R_{\mathrm{(b)}}
\text{ and }
(\mu,\beta,\theta)\notin R_{\mathrm{(c)}}
\quad\Rightarrow\quad
a_3 a_4< 0
\]
by considering the following 3 cases:
\begin{enumerate}
    \item[(i)]
    $    \neg\text{\eqref{eq:casea_conda}} \text{ and }\neg\text{\eqref{eq:caseb_conda}} 
    \text{ and }
    (\mu,\beta,\theta)\notin R_{\mathrm{(c)}}
    \quad\Rightarrow\quad a_3 a_4<0$
    \item[(ii)]
    $
    \neg\text{\eqref{eq:casea_conda}} \text{ and }
    \text{\eqref{eq:caseb_conda}} \text{ and }
    \neg\text{\eqref{eq:caseb_condb}} 
    \text{ and }
    (\mu,\beta,\theta)\notin R_{\mathrm{(c)}}
    \quad\Rightarrow\quad a_3 a_4< 0
    $
    \item[(iii)]
    $
    \text{\eqref{eq:casea_conda}} \text{ and }\neg\text{\eqref{eq:casea_condb}}
    \text{ and }
    (\mu,\beta,\theta)\notin R_{\mathrm{(c)}}
    \quad\Rightarrow\quad a_3 a_4< 0
    $
\end{enumerate}

\begin{enumerate}
    \item[(i) \& (ii)]
    Since $a_4$ is an affine function of $\theta$ and $\theta\in\left(0,2\tfrac{\beta  \mu +\beta +\mu}{2 \beta  \mu +\beta +\mu }\right)$, either
    \[
    a_4<\max\left\{
    a_4\big|_{\theta=0},a_4\big|_{\theta=2\tfrac{\beta  \mu +\beta +\mu}{2 \beta  \mu +\beta +\mu }}
    \right\}=
    \max\left\{-2 (\beta +1) (\beta  \mu +\beta -\mu ),
    -\tfrac{4 \beta ^2 \mu }{2 \beta  \mu +\beta +\mu }
    \right\}\le 0,
    \]
    where the first term is nonpositive by the assumption $\beta  \mu +\beta -\mu \ge0$,
    or 
    \[
    a_4=-2 (\beta +1) (\beta  \mu +\beta -\mu )=
    -\tfrac{4 \beta ^2 \mu }{2 \beta  \mu +\beta +\mu }
    < 0.
    \]
    In both cases, $a_4<0$.
    
     \begin{itemize}
        \item[(i)] 
    Since $a_3$ is an affine function of $\theta$ and $\theta\in\left(0,2\tfrac{\beta  \mu +\beta +\mu}{2 \beta  \mu +\beta +\mu }\right)$, either
            \[
    a_3> \min\left\{
    a_3\big|_{\theta=0},a_3\big|_{\theta=2\tfrac{\beta  \mu +\beta +\mu}{2 \beta  \mu +\beta +\mu }}
    \right\}=
    \min\left\{
    2 (\beta +1) (\mu +1) (  \beta +\mu-\beta\mu ),
    \tfrac{4 \beta  \mu  (\beta +\mu )}{2 \beta  \mu +\beta +\mu }
    \right\}\ge 0,
    \]
     where the first term is nonnegative by $\neg\text{\eqref{eq:caseb_condb}}$, which is $\beta  \mu -\beta -\mu \leq 0$,
     or
     \[
    a_3=
    2 (\beta +1) (\mu +1) (  \beta +\mu-\beta\mu )=
    \tfrac{4 \beta  \mu  (\beta +\mu )}{2 \beta  \mu +\beta +\mu }
    > 0.
    \]
    In both cases, we have $a_3>0$.
    
    \item[(ii)]
As we had shown in
Section~\ref{ss:theorem3_verification} case (b) part (ii), we have
    \[
    \text{\eqref{eq:caseb_conda}}\quad\Rightarrow\quad
    -2 \beta ^2 \mu ^2+\beta ^2 \mu +\beta ^2+\beta  \mu ^2+\beta +\mu ^2+\mu <0,
    \]
    and we have no division by $0$ in considering $\neg\text{\eqref{eq:caseb_condb}}$.

Since $a_3$ is an affine function of $\theta$ and $\theta\in\left(\tfrac{2 \left(\beta ^2+\beta  \mu +\beta +\mu ^2+\mu -\mu ^2\beta ^2\right)}{-2 \beta ^2 \mu ^2+\beta ^2 \mu +\beta ^2+\beta  \mu ^2+\beta +\mu ^2+\mu },2\tfrac{\beta  \mu +\beta +\mu}{2 \beta  \mu +\beta +\mu }\right)$, either
            \[
    a_3> \min\left\{
    a_3\big|_{\theta=\tfrac{2 \left(\beta ^2+\beta  \mu +\beta +\mu ^2+\mu -\mu ^2\beta ^2\right)}{-2 \beta ^2 \mu ^2+\beta ^2 \mu +\beta ^2+\beta  \mu ^2+\beta +\mu ^2+\mu }},a_3\big|_{\theta=2\tfrac{\beta  \mu +\beta +\mu}{2 \beta  \mu +\beta +\mu }}
    \right\}=
    \min\left\{
    0,
    \tfrac{4 \beta  \mu  (\beta +\mu )}{2 \beta  \mu +\beta +\mu }
    \right\}= 0,
    \]
    or
    \[
    a_3=0=
    \tfrac{4 \beta  \mu  (\beta +\mu )}{2 \beta  \mu +\beta +\mu }.
    \]
    The latter case is impossible, and we conclude $a_3>0$.
    \end{itemize}
    
    \item[(iii)]
As we had shown in
Section~\ref{ss:theorem3_verification} case (a) part (ii), we have
    \[
    \text{\eqref{eq:casea_conda}}\quad\Rightarrow\quad
    \mu+\beta  \mu-\beta -\beta ^2 -2 \beta ^2 \mu>0,
    \]
    and we have no division by $0$ in considering $\neg\text{\eqref{eq:casea_condb}}$.

    Since $a_4$ is an affine function of $\theta$ and $\theta\in\left(2 \tfrac{(\beta +1) (\mu-\beta\mu-\beta )}{\mu+\beta  \mu-\beta -\beta ^2 -2 \beta ^2 \mu },2\tfrac{\beta \mu +\beta +\mu}{2 \beta  \mu +\beta +\mu }\right)$, either
    \[
    a_4< \max\left\{
    a_4\big|_{\theta=2 \tfrac{(\beta +1) (\mu-\beta\mu-\beta )}{\mu+\beta  \mu-\beta -\beta ^2 -2 \beta ^2 \mu }},a_4\big|_{\theta=2\tfrac{\beta  \mu +\beta +\mu}{2 \beta  \mu +\beta +\mu }}
    \right\}=
    \max\left\{0
    ,
    -\tfrac{4 \beta ^2 \mu }{2 \beta  \mu +\beta +\mu }
    \right\}= 0.
    \]
    or
    \[
    a_4=0
    =
    -\tfrac{4 \beta ^2 \mu }{2 \beta  \mu +\beta +\mu }
    \]
    The latter case is impossible, and we conclude $a_4<0$.
    
 Since $a_3$ is an affine function of $\theta$ and $\theta\in(2 \tfrac{(\beta +1) (\mu-\beta\mu-\beta )}{\mu+\beta  \mu-\beta -\beta ^2 -2 \beta ^2 \mu },2\tfrac{\beta \mu +\beta +\mu}{2 \beta  \mu +\beta +\mu })$, either
            \[
    a_3>\min\left\{
    a_3\big|_{\theta=2 \tfrac{(\beta +1) (\mu-\beta\mu-\beta )}{\mu+\beta  \mu-\beta -\beta ^2 -2 \beta ^2 \mu }},a_3\big|_{\theta=2\tfrac{\beta  \mu +\beta +\mu}{2 \beta  \mu +\beta +\mu }}
    \right\}=
    \min\left\{
    \tfrac{4 \beta  \left(1-\beta ^2\right) \mu ^2}{\mu+\beta  \mu-\beta -\beta ^2 -2 \beta ^2 \mu },
    \tfrac{4 \beta  \mu  (\beta +\mu )}{2 \beta  \mu +\beta +\mu }
    \right\}> 0,
    \]
    where the first term is positive since $1>\beta$ follows from \eqref{eq:casea_conda},
    or
    \[
    a_3=
    \tfrac{4 \beta  \left(1-\beta ^2\right) \mu ^2}{\mu+\beta  \mu-\beta -\beta ^2 -2 \beta ^2 \mu }=
    \tfrac{4 \beta  \mu  (\beta +\mu )}{2 \beta  \mu +\beta +\mu }>0.
    \]
    In both cases, we have $a_3>0$.

\end{enumerate}

\subsection{Inequalities for Theorem~\ref{thm:DRS_Lipschitz_strmonotone}}
\label{ss:thm4ineq}

\subsubsection{Upper bounds}
\label{ss:lip-mon-upper}
It remains to show that there is no division by $0$ and $\lambda^A_\mu,\lambda_L^B,K_1,K_2\geq 0$ in each case.

\paragraph{Case (a)}
Since the quantities in Section~\ref{sec:upper_bound_Lips} Case (a) are simple, 
we verify by inspection that there is no division by $0$ and $\lambda^A_\mu,\lambda_L^B,K_1,K_2\geq 0$.

\paragraph{Case (b)}
Assume $0<\mu$, $0<L$, $0<\theta<2$, and
	\begin{align}
	& \label{eq:thm112b1} L<1\\
	& \label{eq:thm112b2}\mu >\tfrac{L^2+1}{(L-1)^2} \\
	&\label{eq:thm112b3}\theta \leq 2 (\mu +1) (L+1)\tfrac{\mu +\mu  L^2-L^2-2 \mu  L-1}{2 \mu ^2-\mu +\mu  L^3-L^3-3 \mu  L^2-L^2-2 \mu ^2 L-\mu  L-L-1}\\
	 		&\hspace{1.5in}\text{(no division by $0$ implied by (ii) below)}\nonumber
	\end{align}
	\begin{itemize}
		\item[(i)] From~\eqref{eq:thm112b1} and~\eqref{eq:thm112b2}, it is direct to obtain $\mu>1$.
		\item[(ii)] The numerator of \eqref{eq:thm112b3} is positive since, by \eqref{eq:thm112b2}, we have
		\[
		\mu +\mu  L^2-L^2-2 \mu  L-1
		=\mu  (1-L)^2-\left(L^2+1\right)>0.
		\]
        Since $\theta>0$, the denominator of \eqref{eq:thm112b3} is nonnegative.
        To prove strict positivity, we view the denominator of \eqref{eq:thm112b3} as a quadratic function of $\mu$:
        \begin{align*}
        \phi_{L,\theta}(\mu)&=2 \mu ^2-\mu +\mu  L^3-L^3-3 \mu  L^2-L^2-2 \mu ^2 L-\mu  L-L-1\\
        &=\underbrace{2(1-L)}_{>0\text{ by \eqref{eq:thm112b1}}}\mu^2+(L^3-3L^2-L-1)\mu-(1+L)(1+L^2).
        \end{align*}
        This quadratic is nonpositive only between its roots and
        \[
        \phi_{L,\theta}(0)=-(1+L)(1+L^2)<0,\quad \phi_{L,\theta}\left(\tfrac{L^2+1}{(L-1)^2}\right)=2 L\tfrac{ \left(L^2+1\right)^2}{(1-L)^3}>0.
        \]
        Therefore $\phi_{L,\theta}(\mu)>0$ for all $\mu>\tfrac{L^2+1}{(L-1)^2}$, which holds by \eqref{eq:thm112b2}.
        Therefore we conclude the denominator of \eqref{eq:thm112b3} is strictly positive.
		\item[(iii)]
		Multiply both sides of \eqref{eq:thm112b3} by $(\mu +L)/(\mu +1) (L+1)$ and reorganize to get
		\begin{equation*}
		\begin{aligned}
		1-\theta\tfrac{\mu +L}{(\mu +1) (L+1)}&\ge 1-2(\mu+L)\tfrac{\mu +\mu  L^2-L^2-2 \mu  L-1}{2 \mu ^2-\mu +\mu  L^3-L^3-3 \mu  L^2-L^2-2 \mu ^2 L-\mu  L-L-1}\\
		&=\tfrac{(\mu -1) (1-L) (1+2\mu L+L^2)}{2 \mu ^2-\mu +\mu  L^3-L^3-3 \mu  L^2-L^2-2 \mu ^2 L-\mu  L-L-1}\\
		&>0,
		\end{aligned} 
		\end{equation*}
		where the latter inequality follows from \eqref{eq:thm112b1}, (i), and (ii).
		\item[(iv)]
		Multiply both sides of \eqref{eq:thm112b3} by $(2 \mu +L+1)$ and reorganize to get
		\begin{equation*}
		\begin{aligned}
		2 (\mu +1) (L+1)-\theta  (2 \mu +L+1)\ge\tfrac{4 \mu ^2 (\mu +1) L \left(1-L^2\right)}{2 \mu ^2-\mu +\mu  L^3-L^3-3 \mu  L^2-L^2-2 \mu ^2 L-\mu  L-L-1}>0,
		\end{aligned}
		\end{equation*}
		where the latter inequality follows from \eqref{eq:thm112b1} and (ii).
		\item[(v)]
				Multiply both sides of \eqref{eq:thm112b3} by the denominator of \eqref{eq:thm112b3} (which is positive by (ii)) and reorganize to get
		\begin{equation*}
		\begin{aligned}
		2 &(\mu +1) (L+1) \left(\mu  (1-L)^2-\left(L^2+1\right)\right)+\theta  \left(\mu  \left(1+L+3 L^2-L^3\right)+\left(1+L+L^2+L^3\right)+2 \mu ^2 (L-1)\right)\ge 0.
		\end{aligned}
		\end{equation*}
		\item[(vi)] Multiply both sides of~\eqref{eq:thm112b3} by $L+\mu$ and reorganize to get
		\[
		(L+1)(\mu+1)-\theta(L+\mu)\geq 
		\tfrac{\left(\mu^2-1\right) (1-L^2)(1+L^2+2\mu L)}{2 \mu ^2-\mu +\mu  L^3-L^3-3 \mu  L^2-L^2-2 \mu ^2 L-\mu  L-L-1}.
		\]
		Since $\mu>1$ by (i),  $L<1$ by \eqref{eq:thm112b1}, and the denominator is positive by (ii), we have
		\[
		(L+1)(\mu+1)-\theta(L+\mu)>0.
		\]
	\end{itemize}

	(iv) shows $\lambda^A_\mu$ is nonnegative.
	(i) and (iv) show $\lambda^B_L$ is nonnegative.
	(v) shows the numerator of $K_1$ is nonnegative.
	(v) and (vi) show the numerator of $K_2$ is nonnegative. 
	(v) shows the denominator of $K_2$ is positive.
	(v) shows the denominators of $m_2$ and $m_3$ are positive.

\paragraph{Case (c)}
Assume $(\mu,L,\theta)\in R_\mathrm{(c)}$. Then:
\begin{itemize}
\item[(i)] We show $L<1$.
Since $(\mu,L,\theta)\notin R_\mathrm{(a)}$, we have
	\[
	0<\sqrt{L^2+1}<
	\mu\tfrac{-\left(2 (\theta -1) \mu +\theta-2\right)+L^2\left(\theta -2(1+ \mu)\right)}{\sqrt{ (2 (\theta -1) \mu +\theta -2)^2+L^2 (\theta -2 (\mu +1))^2}}.
	\] 
	This implies the numerator of the right-most-side is positive:
	\begin{equation}
	0<-\left(2 (\theta -1) \mu +\theta-2\right)+L^2\underbrace{\left(\theta -2(1+ \mu)\right)}_{<0\text{ by }\mu>0,\,\theta<2}.
	\label{eq:i-main}
	\end{equation}
	Therefore
	\[
	L^2<\frac{2 (\theta -1) \mu+\theta-2}{\theta -2(1+ \mu)}=
	1+
	\underbrace{\frac{2\theta\mu}{\theta -2(1+ \mu)}}_{<0\text{ by }\mu>0,\,\theta<2}<1,
	\]
	and we conclude $L<1$.

\item[(ii)]
We have $(2-\theta ) \left(1-L^2\right)-2 \mu  \left(\theta +L^2-1\right)>0$, since it is a simple reformulation of \eqref{eq:i-main}.
\item[(iii)]
We have $\theta(L^2+1)-2\mu(\theta+L^2-1)>0$, because
either $\theta+L^2-1< 0$ and
	\[
	\theta(L^2+1)-2\mu(\theta+L^2-1)>0
	\]
or $\theta+L^2-1\ge  0$ and
	\[
	\theta(L^2+1)-2\mu(\theta+L^2-1)>\theta(L^2+1)-(2-\theta)(1-L^2)=2 \left(\theta +L^2-1\right)\geq 0,
	\]
where the first inequality follows from (ii).
\item[(iv)] We prove $\theta  \left(1+2 \mu +L^2\right)-2 (\mu +1) \left(L^2+1\right)<0$.
Note that \eqref{eq:i-main}, the main inequality from (i), can be reformulated as
\begin{equation*}
    \begin{aligned}
    0&<(2-\theta ) \left(1-L^2\right)-2 \mu  \left(\theta +L^2-1\right)\\
    &=\theta \underbrace{\left(-2 \mu +L^2-1\right)}_{<0\text{ by }(i)}-2 (\mu +1) \left(L^2-1\right).
    \end{aligned}
\end{equation*}
Therefore,
$\theta <\tfrac{2 (\mu +1) \left(1-L^2\right)}{2 \mu -L^2+1}$
and plugging this into $\theta$ gives us
\[\theta  \left(1+2 \mu +L^2\right)-2 (\mu +1) \left(L^2+1\right)<-\tfrac{8 \mu  (\mu +1) L^2}{2 \mu+1-L^2}<0.\]

\item[(v)] Finally, we show $\mu>1$.  (We use this later in Section~\ref{ss:lowerbound-Lipschitz-monotone}.)
Since $(\mu,L,\theta)\notin R_\mathrm{(a)}$ we have
	\[
	1<\sqrt{L^2+1}<
	\mu\tfrac{-\left(2 (\theta -1) \mu +\theta-2\right)+L^2\left(\theta -2(1+ \mu)\right)}{\sqrt{ (2 (\theta -1) \mu +\theta -2)^2+L^2 (\theta -2 (\mu +1))^2}},
	\] 
	which implies
    \[ \sqrt{(2 (\theta -1) \mu +(\theta -2))^2+L^2 (\theta -2 (\mu +1))^2}<\mu\left(-\left(2 (\theta -1) \mu +\theta-2\right)+L^2\left(\theta -2(1+ \mu)\right)\right),\]
    which, in turn, implies
    \begin{equation*}
        \begin{aligned}
        \sqrt{(2 (\theta -1) \mu +(\theta -2))^2}<&\sqrt{(2 (\theta -1) \mu +(\theta -2))^2+L^2 (\theta -2 (\mu +1))^2}\\
        &<\mu\left(-\left(2 (\theta -1) \mu +\theta-2\right)+L^2\left(\theta -2(1+ \mu)\right)\right)<\mu\left(-\left(2 (\theta -1) \mu +\theta-2\right)\right),
        \end{aligned}
    \end{equation*}
    where we used $\theta<2<2(1+\mu)$ for the last inequality. Finally, using the implied $\mu\left(-\left(2 (\theta -1) \mu +\theta-2\right)\right)>0$, we get
    \begin{equation*}
        \begin{aligned}
        0<\left(-\left(2 (\theta -1) \mu +\theta-2\right)\right)<\mu\left(-\left(2 (\theta -1) \mu +\theta-2\right)\right)\Rightarrow1<\mu.
        \end{aligned}
    \end{equation*}
\end{itemize}

(iii) and (iv) shows the numerator of $\rho^2$ is nonpositive.
(ii) shows the denominator of $\rho^2$ is negative.
(iii) shows that $\lambda^A_\mu$ is nonnegative.
(iii) shows that the numerator of  $\lambda^B_L$ is nonnegative.
(ii) shows that the denominator of $\lambda^B_L$ is positive.
(ii) shows that the denominator of $K_1$ is positive.


\subsubsection{Lower bounds}
\label{ss:lowerbound-Lipschitz-monotone}
It remains to show that $(\mu,L,\theta)\in R_\mathrm{(c)}\Rightarrow 0\le K\le 1$
Note that
\[
(\mu,L,\theta)\in R_\mathrm{(c)}
\quad\Rightarrow\quad
(\mu,L,\theta)\notin R_\mathrm{(a)}\text{ and }
(\mu,L,\theta)\notin R_\mathrm{(b)}.
\]

Write 
\[
K=\tfrac{L^2+1}{2L}\tfrac{K_\mathrm{num}}{K_\mathrm{den}}.
\]
In part I, we show $K_\mathrm{den}>0$ and $K_\mathrm{num}\ge 0$ using $(\mu,L,\theta)\notin R_\mathrm{(a)}$.
In part I, we show $K\le 1$ using $(\mu,L,\theta)\notin R_\mathrm{(a)}$ and $(\mu,L,\theta)\notin R_\mathrm{(b)}$.

\paragraph{Part I}
We now show $K_\mathrm{den}>0$ and $K_\mathrm{num}\ge 0$.

First, we quickly recall 
$(\mu,L,\theta)\notin R_\mathrm{(a)}\Rightarrow \mu>1$ and $(\mu,L,\theta)\notin R_\mathrm{(a)}\Rightarrow L<1$ by respectively (v) and (i) of case (c) of Section~\ref{ss:lip-mon-upper}.

Next we move on to the main proof. Note
\begin{equation*}
\begin{aligned}
&(\mu,L,\theta)\notin R_\mathrm{(a)}\\
&\quad\Rightarrow\quad
\tfrac{\mu  \left(-2 (\theta -1) \mu -\theta +L^2 (\theta -2 (\mu +1))+2\right)}{\sqrt{(2 (\theta -1) \mu +\theta -2)^2+L^2 (\theta -2 (\mu +1))^2}}> \sqrt{L^2+1}\\
&\quad\Rightarrow\quad\mu  \left(-2 (\theta -1) \mu -\theta +L^2 (\theta -2 (\mu +1))+2\right)>\sqrt{L^2+1}\sqrt{(2 (\theta -1) \mu +\theta -2)^2+L^2 (\theta -2 (\mu +1))^2}\\
&\quad\Rightarrow\quad\left(\mu  \left(-2 (\theta -1) \mu -\theta +L^2 (\theta -2 (\mu +1))+2\right)\right)^2> (L^2+1)((2 (\theta -1) \mu +\theta -2)^2+L^2 (\theta -2 (\mu +1))^2).
\end{aligned}
\end{equation*}
Reorganizing the last inequality gives us
\[
(\mu+1)K_\mathrm{num}>0,
\]
which proves $K_\mathrm{num}>0$.

Let us view $K_\mathrm{den}$ as a quadratic function of $\theta$.
From (ii) of case (c) of Section~\ref{ss:lip-mon-upper}, 
we have
$(2-\theta ) \left(1-L^2\right)-2 \mu  \left(\theta +L^2-1\right)>0$, which we can reformulate as $\theta<\tfrac{2 (\mu +1) \left(1-L^2\right)}{1 -L^2+2 \mu}$ using to $L<1$.
Basic computation gives us
\begin{equation*}
\begin{aligned}
& K_\mathrm{den}\Big|_{\theta=0}=4 (\mu +1) \left(\mu ^2 \left(L^2-1\right)^2+\left(L^2+1\right)^2\right)>0,\\
&K_\mathrm{den}\Big|_{\theta=\tfrac{2 (\mu +1) \left(1-L^2\right)}{1 -L^2+2 \mu}}=\tfrac{16 \mu ^2 (\mu +1) L^2 \left(L^2+1\right)^2}{\left(1-L^2+2 \mu\right)^2}>0,\\
&\tfrac{dK_\mathrm{den}}{d\theta}\Big|_{\theta=\tfrac{2 (\mu +1) \left(1-L^2\right)}{1 -L^2+2 \mu}}=-\tfrac{4 \mu  \left(L^2+1\right)^2 \left(2 \mu +L^2+1\right)}{1 -L^2+2 \mu}<0.
\end{aligned}
\end{equation*}
If the quadratic term of $K_\mathrm{den}$ is negative, $K_\mathrm{den}$ is positive when $\theta$ is between the roots, and the interval $\theta\in \left(0,\tfrac{2 (\mu +1) \left(1-L^2\right)}{1 -L^2+2 \mu}\right)$ lies between the roots.
If the quadratic term of $K_\mathrm{den}$ is nonnegative, then the roots (if they exist) would lie in $(\tfrac{2 (\mu +1) \left(1-L^2\right)}{1 -L^2+2 \mu},\infty)$ because of the sign of the derivative.
Therefore we conclude $K_\mathrm{den}>0$ in both cases.

\paragraph{Part II}
We now show $K=\tfrac{L^2+1}{2L}\tfrac{K_\mathrm{num}}{K_\mathrm{den}}\leq 1$.
Since $K_\mathrm{den}>0$, we can equivalently show
\begin{equation}\label{eq:Lips_tightness_nonpositivity}
    \begin{aligned}
    \left(L^2+1\right)K_\mathrm{num}-2LK_\mathrm{den}=a_1a_2\le 0
    \end{aligned}
\end{equation}
with
\begin{equation*}
    \begin{aligned}
a_1&=2 \mu  (1-L) \left(\theta +L^2-1\right)-(2-\theta) (L+1) \left(L^2+1\right)\\
a_2&=2 \mu ^2 (1-L) \left(\theta +L^2-1\right)+(2-\theta ) (L+1) \left(L^2+1\right)+\theta  \mu  \left((L-1)^3-4 L\right)+4 \mu  L (L+1).
    \end{aligned}
\end{equation*}

Remember that in Section~\ref{ss:lip-mon-upper} case (c) part (ii) we had shown
 $2 \mu  \left(\theta +L^2-1\right)<(2-\theta ) \left(1-L^2\right)$.
Plugging this inequality into the first term of $a_1$ gives us
\begin{equation*}
    \begin{aligned}
    a_1&\leq (2-\theta ) (1-L) \left(1-L^2\right)-(2-\theta ) (L+1) \left(L^2+1\right)=-2 (2-\theta) L (L+1)<0.
    \end{aligned}
\end{equation*}

Finally, we show $a_2\ge 0$.
Remember that 
\[
(\mu,L,\theta)\in R_\mathrm{(b)}\quad\Leftrightarrow\quad\text{\eqref{eq:thm112b1}}\text{ and }\text{\eqref{eq:thm112b2}}\text{ and }\text{\eqref{eq:thm112b3}}
\]
so we divide $(\mu,L,\theta)\notin R_\mathrm{(b)}$ into the following three cases:
\begin{enumerate}
    \item [(i)] Case $\neg\text{\eqref{eq:thm112b1}}$. This case corresponds to $L\ge 1$, but this cannot happen as $(\mu,L,\theta)\notin R_\mathrm{(a)}$ implies $L<1$.
    \item[(ii)] Case $\text{\eqref{eq:thm112b1}}\text{ and }\neg\text{\eqref{eq:thm112b2}}$.
    This case corresponds to $\mu \leq  \frac{L^2+1}{(L-1)^2}$.
    Plug $ L^2+1\geq \mu (L-1)^2$ into the second term of $a_2$ to get
\begin{equation*}
    \begin{aligned}
    a_2&\geq 2 \mu  \left(\mu  (1-L) \left(\theta +L^2-1\right)+\left(L^2+1\right) (1+L-\theta )\right)\\
    &\geq \mu  (1-L) \left(\theta +L^2-1\right)+\left(L^2+1\right) (1+L-\theta ),
    \end{aligned}
\end{equation*}
which we further split in two cases: either $ 1+L-\theta \geq 0$ and we use $ L^2+1\geq \mu (L-1)^2$ again to get
\[a_2\geq  \theta  \mu  L(1-L) >0,\]
or $1+L-\theta<0$ and we use $0<\theta-L-1\leq \theta+L^2-1$ and hence from the previous inequality on $a_2$ and $\mu>1$:
\begin{equation*}
    \begin{aligned}
    a_2
    &\geq   (1-L) \left(\theta +L^2-1\right)+\left(L^2+1\right) (1+L-\theta )\\
    &=  (2-\theta) L (L+1)\\
    &> 0,
    \end{aligned}
\end{equation*}
where the last inequality follows from $\theta<2$.
\item[(iii)]
Case $\text{\eqref{eq:thm112b1}}\text{ and }\text{\eqref{eq:thm112b2}}\text{ and }\neg\text{\eqref{eq:thm112b3}}$.
Remember that in Section~\ref{ss:lip-mon-upper} case (b) part (ii), we have shown that \eqref{eq:thm112b1} and \eqref{eq:thm112b2} implies
\[
2 \mu ^2-\mu +\mu  L^3-L^3-3 \mu  L^2-L^2-2 \mu ^2 L-\mu  L-L-1>0.
\]
So there is no division by $0$ in $\neg\text{\eqref{eq:thm112b3}}$ and we have
    \[
    \theta (2 \mu ^2-\mu +\mu  L^3-L^3-3 \mu  L^2-L^2-2 \mu ^2 L-\mu  L-L-1)>2 (\mu +1) (L+1) \left(\mu +\mu  L^2-L^2-2 \mu  L-1\right).
    \]
    We conclude by noting
    \begin{align*}
    a_2&=\theta (2 \mu ^2-\mu +\mu  L^3-L^3-3 \mu  L^2-L^2-2 \mu ^2 L-\mu  L-L-1)-2 (\mu +1) (L+1) \left(\mu +\mu  L^2-L^2-2 \mu  L-1\right)\\
    &\geq0.
    \end{align*}

\section{Visualization}
\label{s:visualize}
In this section, we visualize the cases for Theorems~\ref{thm:DRS_coco_strmonotone} and \ref{thm:DRS_Lipschitz_strmonotone} and the contraction factors for Corollaries~\ref{cor:DRS_coco_strmonotone_no_relax} and \ref{cor:DRS_Lipschitz_strmonotone_no_relax}.

\begin{figure}
\begin{center}
\begin{subfigure}{.4\textwidth}
\includegraphics[width=1\textwidth]{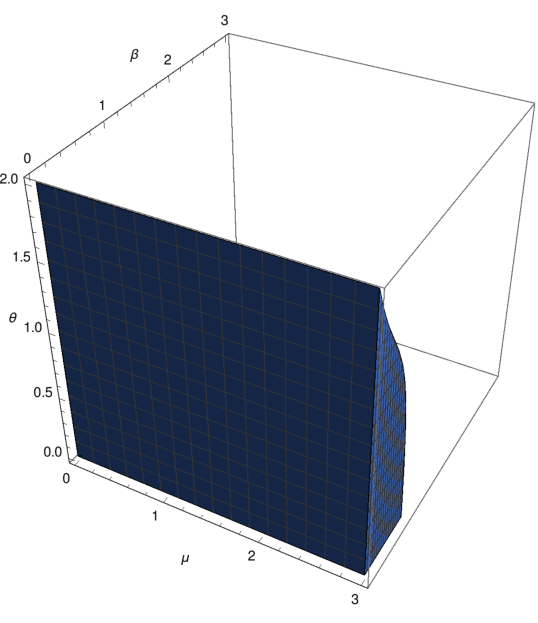}
\caption{Case (a)}
\end{subfigure}
\begin{subfigure}{.4\textwidth}
\includegraphics[width=1\textwidth]{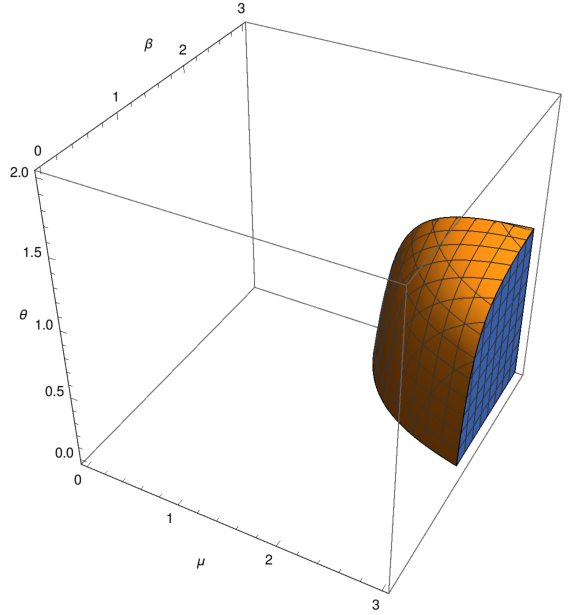}
\caption{Case (b)}
\end{subfigure}\\
\begin{subfigure}{.4\textwidth}
\includegraphics[width=1\textwidth]{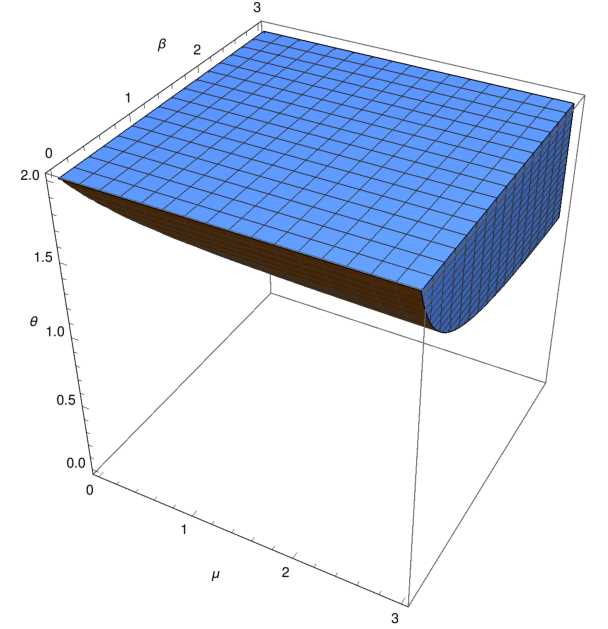}
\caption{Case (c)}
\end{subfigure}
\begin{subfigure}{.4\textwidth}
\includegraphics[width=1\textwidth]{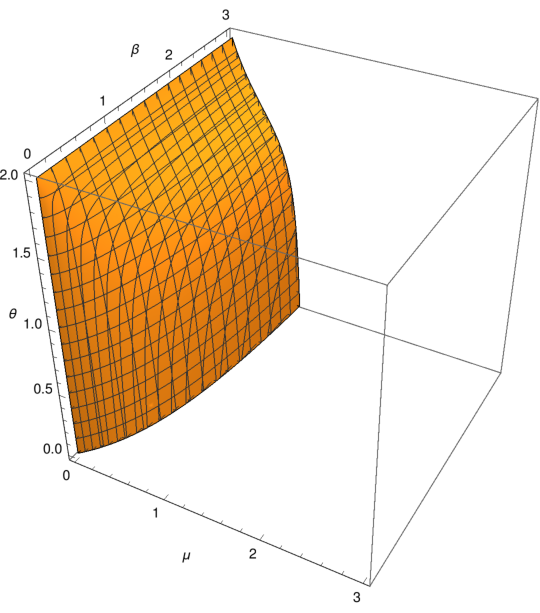}
\caption{Case (d)}
\end{subfigure}\\
\begin{subfigure}{.4\textwidth}
\includegraphics[width=1\textwidth]{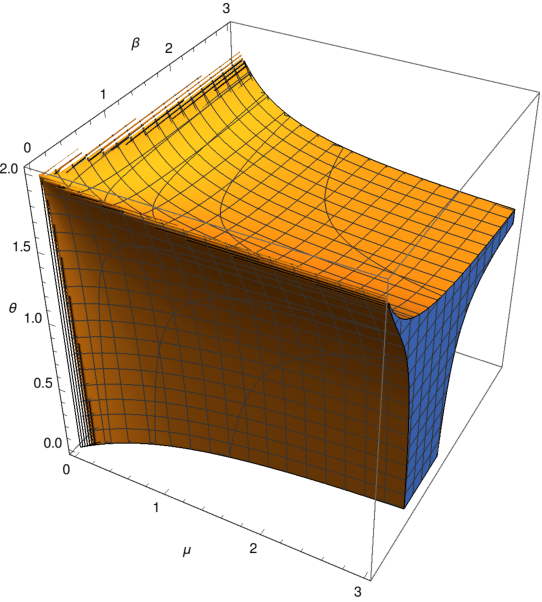}
\caption{Case (e)}
\end{subfigure}
\end{center}
\caption{Parameter regions for the 5 cases of Theorem~\ref{thm:DRS_coco_strmonotone} in the $\mu$-$\beta$-$\theta$ plane.}
\end{figure}

\begin{figure}
\begin{center}
\begin{subfigure}{.48\textwidth}
\includegraphics[width=1\textwidth]{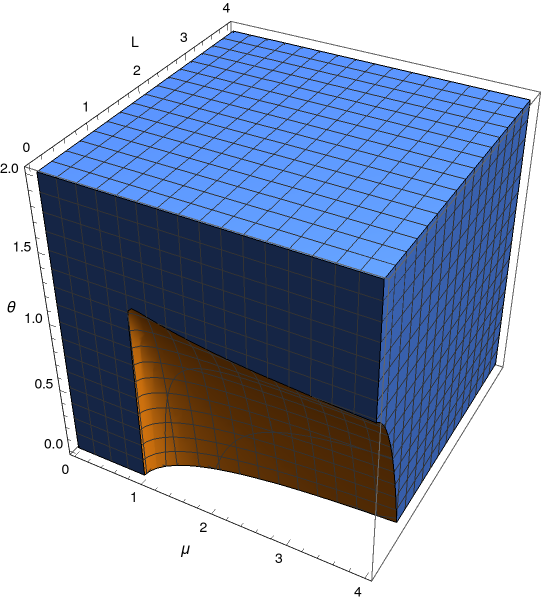}
\caption{Case (a)}
\end{subfigure}
\begin{subfigure}{.48\textwidth}
\includegraphics[width=1\textwidth]{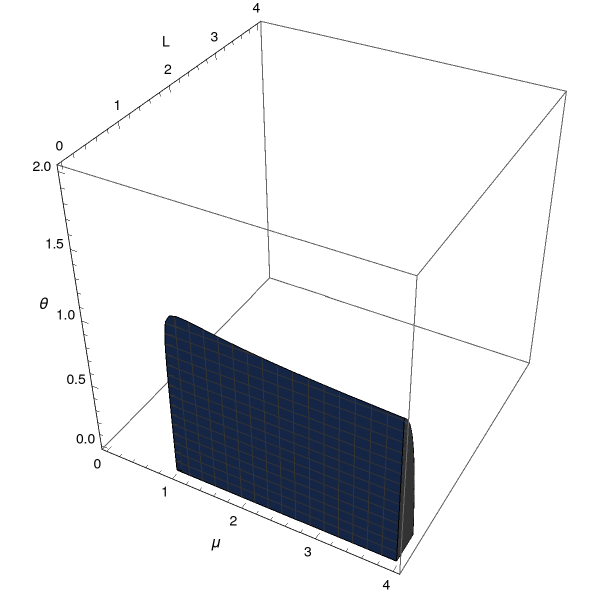}
\caption{Case (b)}
\end{subfigure}\\
\begin{subfigure}{.48\textwidth}
\includegraphics[width=1\textwidth]{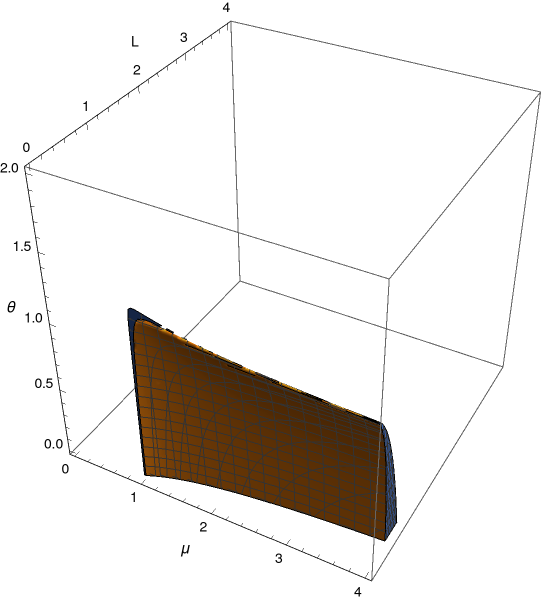}
\caption{Case (c)}
\end{subfigure}
\end{center}
\caption{Parameter regions for the 3 cases of Theorem~\ref{thm:DRS_Lipschitz_strmonotone} in the $\mu$-$L$-$\theta$ plane.}
\end{figure}

\begin{figure}
\begin{center}
\includegraphics[width=0.7\textwidth]{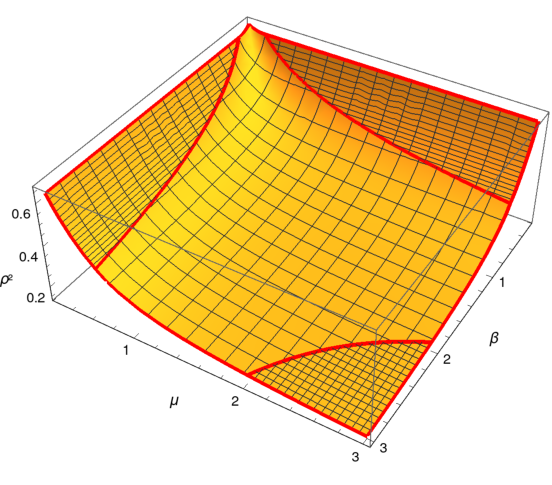}
\end{center}
\caption{Contraction factor for Corollary~\ref{cor:DRS_coco_strmonotone_no_relax} (when $\theta=1$)
in the $\mu$-$\beta$ plane.}
\end{figure}

\begin{figure}
\begin{center}
\includegraphics[width=0.7\textwidth]{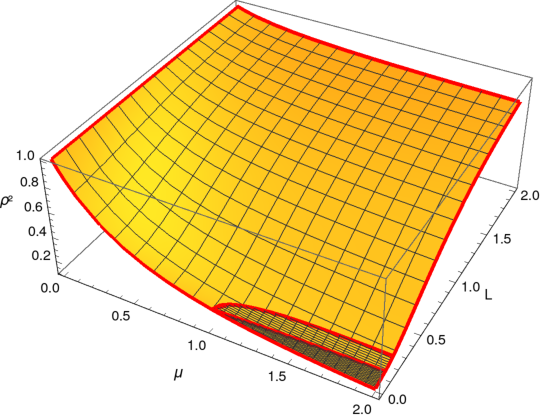}
\end{center}
\caption{Contraction factor for Corollary~\ref{cor:DRS_Lipschitz_strmonotone_no_relax} (when $\theta=1$)
in the $\mu$-$L$ plane.}
\end{figure}

\end{enumerate}

\end{document}